\newtheorem{thm}{Theorem}
\newtheorem{lem}{Lemma}
\newtheorem{cor}{Corollary}
\newtheorem{prop}{Proposition}
\newtheorem{defi}{Definition}
\newcommand{\R}{\mathbb{R}}
\newcommand{\e}{\begin{equation}}
\newcommand{\ee}{\end{equation}}
\newcommand{\en}{\begin{equation*}}
\newcommand{\een}{\end{equation*}}
\newcommand{\eqn}{\begin{eqnarray}}
\newcommand{\eeqn}{\end{eqnarray}}
\newcommand{\bmat}{\begin{bmatrix}}
\newcommand{\emat}{\end{bmatrix}}
\newcommand{\btab}{\begin{tabular}}
\newcommand{\etab}{\end{tabular}}
\renewcommand{\P}[1]{\Pr\left[#1\right]}
\newcommand{\E}{\operatorname{E}}
\newcommand{\vct}[1]{\boldsymbol{#1}}
\newcommand{\mtx}[1]{\boldsymbol{#1}}
\newcommand{\T}{\mathrm{T}}
\newcommand{\trace}{\operatorname{trace}}
\newcommand{\dist}{\operatorname{dist}}
\newcommand{\set}[1]{\mathbb{#1}}
\DeclareMathOperator*{\minimize}{\text{minimize}}
\DeclareMathOperator*{\argmin}{\text{argmin}}
\def \stiefel {\operatorname*{St}}
\newcommand{\Retr}{\operatorname{Retr}}
\newcommand{\grad}{\operatorname{grad}}
\def \st {\operatorname*{subject\ to\ }}
\newcommand{\calB}{\mathcal{B}}
\newcommand{\calC}{\mathcal{C}}
\newcommand{\calD}{\mathcal{D}}
\newcommand{\calM}{\mathcal{M}}
\newcommand{\calN}{\mathcal{N}}
\newcommand{\calO}{\mathcal{O}}
\newcommand{\calP}{\mathcal{P}}
\newcommand{\calR}{\mathcal{R}}
\newcommand{\calS}{\mathcal{S}}
\newcommand{\calU}{\mathcal{U}}
\newcommand{\calW}{\mathcal{W}}
\newcommand{\calX}{\mathcal{X}}
\newcommand{\vb}{\vct{b}}
\newcommand{\vd}{\vct{d}}
\newcommand{\vo}{\vct{o}}
\newcommand{\vs}{\vct{s}}
\newcommand{\vx}{\vct{x}}
\newcommand{\vy}{\vct{y}}
\newcommand{\vxi}{\vct{\xi}}
\newcommand{\mA}{\mtx{A}}
\newcommand{\mB}{\mtx{B}}
\newcommand{\mD}{\mtx{D}}
\newcommand{\mG}{\mtx{G}}
\newcommand{\mH}{\mtx{H}}
\newcommand{\mO}{\mtx{O}}
\newcommand{\mR}{\mtx{R}}
\newcommand{\mS}{\mtx{S}}
\newcommand{\mU}{\mtx{U}}
\newcommand{\mV}{\mtx{V}}
\newcommand{\mW}{\mtx{W}}
\newcommand{\mX}{\mtx{X}}
\newcommand{\mY}{\mtx{Y}}
\newcommand{\mZ}{\mtx{Z}}
\newcommand{\mGamma}{\mtx{\Gamma}}
\newcommand{\mPhi}{\mtx{\Phi}}
\newcommand{\mSigma}{\mtx{\Sigma}}
\newcommand{\mId}{{\bm I}}
\newcommand{\setS}{\set{S}}
\newlength{\imgwidth}
\newcommand{\twoCol}[2]{\ifthenelse{\boolean{twoColVersion}} {#1} {#2} }
\crefname{hypothesis}{Hypothesis}{Hypotheses}
\title{Weakly Convex Optimization over Stiefel Manifold Using Riemannian Subgradient-Type Methods\thanks{Submitted to the editors \today. The first and second authors contributed equally to this paper. Most of the work of the first author was done when he was affiliated with the Department of Electronic Engineering, The Chinese University of Hong Kong.
		\funding{X. Li was partially supported by the University Development Fund UDF01001808 of CUHK (SZ). Q. Qu was partially supported by the Moore-Sloan fellowship.  Z. Zhu was partially supported by NSF Grant 1704458 and NSF Grant CCF-2008460. A. M.-C. So was partially supported by the Hong Kong Research Grants Council (RGC) General Research Fund (GRF) Project CUHK 14208117 and the CUHK Research Sustainability of Major RGC Funding Schemes Project 3133236.}
}
}
\author{Xiao Li\thanks{Corresponding author. School of Data Science, The Chinese University of Hong Kong, Shenzhen. 
  (\email{lixiao@cuhk.edu.cn}, \url{ https://sites.google.com/view/xli}).}
\and Shixiang Chen\thanks{Department of Industrial and Systems Engineering, Texas A$\&$M University.
		(\email{sxchen@tamu.edu}).}
\and Zengde Deng\thanks{Cainiao Network, Hangzhou, China.
		(\email{dengzengde@gmail.com}).} 
\and Qing Qu\thanks{Department of Electrical Engineering and Computer Science, University of Michigan.  
	(\email{qingqu@umich.edu}, \url{https://qingqu.engin.umich.edu/}).}
\and Zhihui Zhu\thanks{Department of Electrical and Computer Engineering, University of Denver. (\email{zhihui.zhu@du.edu}, \url{http://mysite.du.edu/\~zzhu61/}).}
\and Anthony Man-Cho So\thanks{Department of Systems Engineering and Engineering Management, The Chinese University of Hong Kong. 
		(\email{manchoso@se.cuhk.edu.hk}, \url{http://www.se.cuhk.edu.hk/\~manchoso}).}
}
\begin{document}

\maketitle

\begin{abstract}
We consider a class of nonsmooth optimization problems over the Stiefel manifold, in which the objective function is weakly convex in the ambient Euclidean space. Such problems are ubiquitous in engineering applications but still largely unexplored. We present a family of Riemannian subgradient-type methods---namely Riemannian subgradient, incremental subgradient, and stochastic subgradient methods---to solve these problems and show that they all have an iteration complexity of $\calO(\varepsilon^{-4})$ for driving a natural stationarity measure below $\varepsilon$. In addition, we establish the local linear convergence of the Riemannian subgradient and incremental subgradient methods when the problem at hand further satisfies a sharpness property and the algorithms are properly initialized and use geometrically diminishing stepsizes. To the best of our knowledge, these are the first convergence guarantees for using Riemannian subgradient-type methods to optimize a class of nonconvex nonsmooth functions over the Stiefel manifold. The fundamental ingredient in the proof of the aforementioned convergence results is a new \emph{Riemannian subgradient inequality} for restrictions of weakly convex functions on the Stiefel manifold, which could be of independent interest. We also show that our convergence results can be extended to handle a class of compact embedded submanifolds of the Euclidean space. Finally, we discuss the sharpness properties of various formulations of the robust subspace recovery and orthogonal dictionary learning problems and demonstrate the convergence performance of the algorithms on both problems via numerical simulations. 
\end{abstract}

\begin{keywords}
manifold optimization, nonconvex optimization, orthogonality constraint, iteration complexity, linear convergence, robust subspace recovery, dictionary learning
\end{keywords}

\begin{AMS}
68Q25, 65K10, 90C90, 90C26, 90C06.
\end{AMS}

\section{Introduction}\label{sec:introduction}
In this paper, we consider the problem of optimizing a function with finite-sum structure over the Stiefel manifold---i.e., 
\e\label{eq:stiefel opt problem}
\begin{split}
	&\minimize_{\mX\in \R^{n\times r}} \ f(\mX) := \frac{1}{m} \sum_{i=1}^{m} f_i(\mX)\\
	&\st \mX \in \stiefel(n,r)
\end{split}
\ee
with $\stiefel(n,r):= \{  \mX\in\R^{n\times r}: \mX^\top \mX = \mId_r \}$ and $\mId_r$ being the $r\times r$ identity matrix---where each component $f_i:\R^{n\times r} \rightarrow \R$ ($i=1,\ldots,m$) is assumed to be weakly convex in the ambient Euclidean space $\R^{n\times r}$. Recall that a function $h$ is said to be \emph{weakly convex} if $h(\cdot) + \tfrac{\tau}{2}\|\cdot\|_2^2$ is convex for some constant $\tau\ge0$~\cite{V83}. In particular, the objective function in~\eqref{eq:stiefel opt problem} can be nonconvex and nonsmooth. Our interest in~\eqref{eq:stiefel opt problem} stems from the fact that it arises in many applications from different engineering fields such as representation learning and imaging science. As an illustration, let us present two motivating applications, in which nonsmooth formulations have clear advantages over smooth ones. 



\subsection{Motivating applications}\label{subsec:motivation}


\paragraph{Application 1: Robust subspace recovery} Fitting a linear subspace to a dataset corrupted by outliers is a fundamental problem in machine learning and statistics, primarily known as robust principal component analysis (RPCA) \cite{ma2015generalized} or robust subspace recovery (RSR) \cite{lerman2018overview}. In this problem, one is given measurements $\widetilde \mY$ of the form $\widetilde \mY = \begin{bmatrix} \mY & \mO \end{bmatrix} \mGamma \in \R^{n\times m}$, where the columns of $\mY \in \R^{n\times m_1}$ form inlier points spanning a $d$-dimensional subspace $\calS$; the columns of $\mO \in \R^{n\times m_2} $ form outlier points with no linear structure; $\mGamma \in \R^{m\times m}$ is an unknown permutation, and the goal is to recover the subspace $\calS$. It is well-known that the presence of outliers can severely affect the quality of the solutions obtained by the classic PCA approach, which involves minimizing a smooth least-squares loss~\cite{ma2015generalized}. In order to obtain solutions that are more robust against outliers, the recent works \cite{lerman2015robust,lerman2018overview,maunu2019well} propose to minimize the nonsmooth least absolute deviation (LAD) loss. This leads to the formulation
\e\label{eq:RSR}
\begin{split}
	&\minimize_{\mX \in \R^{n\times d}} \ f(\mX) \;:=\;  \frac{1}{m}\sum_{i=1}^m \left\|(\mId_n - \mX \mX^\top) \widetilde\vy_i \right\|_2\\
	&\st \mX\in \stiefel(n,d),
\end{split}
\ee
where $\widetilde{\vy}_i \in \R^n$ ($i=1,\ldots,m$) denotes the $i$-th column of $\widetilde{\mY}$ and the columns of a global minimizer of \eqref{eq:RSR} are expected to form an orthonormal basis of the subspace $\calS$. The weak convexity of the components of the objective function in \eqref{eq:RSR} can be verified by following the arguments in the proof of \cite[Proposition 6]{li2018nonconvex}.  Thus, the formulation~\eqref{eq:RSR} is an instance of problem~\eqref{eq:stiefel opt problem}. On another front, the works \cite{tsakiris2018dual,zhu2018dual,zhu2019grasssub} consider a dual form of the problem, which leads to the so-called dual principal component pursuit (DPCP) formulation:
\e\label{eq:dpcp stiefel}
\begin{split}
	&\minimize_{\mX\in\R^{n\times r}} \ f(\mX) \;:= \; \frac{1}{m}\sum_{i=1}^m \left\|\widetilde \vy_i^\top \mX \right\|_2 \\
	&  \st \mX \in \stiefel(n,r).
\end{split}
\ee
In contrast to the primal formulation \eqref{eq:RSR}, the dual formulation \eqref{eq:dpcp stiefel} aims to find an orthogonal basis of $\calS^\perp$ (the orthogonal complement to $\calS$) with dimension $r=n-d$. It is clear that the components of the objective function in~\eqref{eq:dpcp stiefel} are convex, thus showing that the formulation~\eqref{eq:dpcp stiefel} is also an instance of problem~\eqref{eq:stiefel opt problem}.

\paragraph{Application 2: Learning sparsely-used dictionaries} A problem that arises in many machine learning and computer vision applications is dictionary learning (DL), whose goal is to find a suitable compact representation of certain input data $\mY = \begin{bmatrix} \vy_1, \ldots, \vy_m \end{bmatrix} \in \R^{n\times m}$~\cite{rubinstein2010dictionaries,wright2010sparse,mairal2014sparse}.  Informally, this entails factorizing the data $\mY$ into a dictionary $\mA$ and a sparse code matrix 
$\mS = \begin{bmatrix}
\vs_1,\ldots,\vs_m
\end{bmatrix}$; i.e., $\mY  \approx \mA \mS$. When the dictionary $\mA \in \R^{n\times n}$ is orthogonal and the code matrix $\mS\in\R^{n\times m}$ is sufficiently sparse, the product $\mA^\top\mY\approx\mS$ should be sparse. Thus, one may approach the problem by finding the sparsest vectors in the row space of $\mY$~\cite{spielman2012exact,qu2016finding,sun2016complete_a}. This motivates the following formulation~\cite{bai2018subgradient}:
\e\label{eq:ODL sphere}
\begin{split}
	&\minimize_{\vx\in \R^n} \ f(\vx) := \frac{1}{m} \left\| \mY^\top \vx \right\|_1 =  \frac{1}{m}  \sum_{i= 1}^{m} \left| \vy_i^\top \vx \right|\\
	&\st \vx \in \stiefel(n,1).
\end{split}
\ee
Note that the solution to \eqref{eq:ODL sphere} only returns one column of $\mA$. Thus, some extra refinement technique, such as deflation \cite{sun2016complete_b} or repetitive independent trials \cite{bai2018subgradient}, is needed to fully solve the DL problem. It has been shown in~\cite{bai2018subgradient} that under a suitable statistical model, the formulation~\eqref{eq:ODL sphere} requires fewer samples for exact recovery of the dictionary $\mA$ than the smooth variant considered in~\cite{sun2016complete_a,sun2016complete_b}. Still, since the approach based on~\eqref{eq:ODL sphere} recovers the columns of $\mA$ one at a time, it can be rather sensitive to noise. To circumvent this difficulty,   one possibility is to directly recover the orthogonal dictionary $\mA$ by
\e\label{eq:ODL orthogonal}
\begin{split}
	&\minimize_{\mX\in \R^{n \times n}} \ f(\mX) := \frac{1}{m} \left\| \mY^\top \mX \right\|_1 =  \frac{1}{m}  \sum_{i= 1}^{m} \left\| \vy_i^\top \mX \right\|_1\\
	&\st \mX \in \stiefel(n,n);
\end{split}
\ee
 cf.~\cite{wang2019unique,zhai2019complete}.
This approach can be easily extended to handle any complete (i.e., square and invertible) dictionaries via preconditioning \cite{sun2016complete_a,zhai2019complete}. Clearly, both~\eqref{eq:ODL sphere} and~\eqref{eq:ODL orthogonal} are instances of~\eqref{eq:stiefel opt problem}.

\subsection{Main contributions}

We study three \emph{Riemannian subgradient-type methods} for solving problem~\eqref{eq:stiefel opt problem}, namely Riemannian subgradient method, Riemannian incremental subgradient method, and Riemannian stochastic subgradient method (see \Cref{subsec:methods}). To analyze the convergence behavior of these methods, we first extend the surrogate stationarity measure developed in~\cite{davis2019stochasticmodel,drusvyatskiy2018efficiency} for weakly convex minimization in the Euclidean space to one for weakly convex minimization over the Stiefel manifold (see \Cref{sec:surrogate stationary measure}). Then, we show that the iterates generated by the aforementioned Riemannian subgradient-type methods will drive the surrogate stationarity measure to zero at a rate of $\calO(k^{-\frac{1}{4}})$, where $k$ is the iteration index (see \Cref{subsec:rsm,subsec:rssm}). Such a complexity guarantee matches that established in~\cite{davis2019stochasticmodel} for a host of algorithms that solve weakly convex minimization problems in the \emph{Euclidean space}. Next, we show that if problem~\eqref{eq:stiefel opt problem} further satisfies the \emph{sharpness} property (see \Cref{def:sharpness}), then the Riemannian subgradient and incremental subgradient methods with properly designed geometrically diminishing stepsizes and a good initialization will  converge to the set of local minima associated with the sharpness property at a \emph{linear} rate (see \Cref{sec:linear convergence}). To the best of our knowledge, our work is the \emph{first} to establish the iteration complexities and convergence rates of Riemannian subgradient-type methods for optimizing a class of nonconvex nonsmooth functions over the Stiefel manifold. We also extend the above convergence results to the setting where the constraint is a compact embedded submanifold of the Euclidean space (see \Cref{sec:extension}).  Lastly, we show that under certain conditions on the inlier and outlier distributions, the LAD~\eqref{eq:RSR} and DPCP~\eqref{eq:dpcp stiefel} formulations of the RSR problem satisfy the sharpness property (see \Cref{subsec:RSR}). Consequently, we are able to obtain recovery guarantees for the so-called Haystack model of the input data that are competitive with state-of-the-art results.

The key to establishing the aforementioned convergence results is an algorithm-independent property that we discovered for restrictions of weakly convex functions on the Stiefel manifold, which we term the \emph{Riemannian subgradient inequality} (see \Cref{sec:Riemannian subgradient inequality}). This is one of the main contributions of this work and could be of independent interest for other Riemannian optimization problems. We believe that our results will have broad implications on understanding the convergence behavior of algorithms for solving more general manifold optimization problems with nonsmooth objectives.

\subsection{Connections with prior arts}\label{sec:review}

\paragraph{Nonsmooth optimization in Euclidean space}

The problem of minimizing a weakly convex function over a convex constraint set is well studied in the literature. The main algorithms for this task include subgradient-type methods~\cite{davis2019stochasticmodel,davis2018subgradient,li2019incremental} and proximal point-type methods \cite{drusvyatskiy2017proximal}. The convergence analyses of these algorithms rely on a certain \emph{weakly convex inequality}. We extend this line of work by considering a nonconvex constraint set---i.e., the Stiefel manifold---and develop an analog of the weakly convex inequality on the Stiefel manifold called the \emph{Riemannian subgradient inequality}. Such an inequality allows us to resort to the analysis techniques for weakly convex minimization in the Euclidean space and prove new convergence results for our Riemannian subgradient-type methods when solving the problem of weakly convex minimization over the Stiefel manifold~\eqref{eq:stiefel opt problem}.

\paragraph{Smooth optimization over Riemannian manifold} Riemannian smooth optimization has been extensively studied over the years; see, e.g.,~\cite{absil2009optimization,jiang2017vector,Liu-So-Wu-2018,boumal2018global,hu2019brief} and the references therein.  Recently, global sublinear convergence results for Riemannian gradient descent and Riemannian trust region have been presented in \cite{boumal2018global}. The analysis relies on the assumption that the pullback of the objective function $f$ to the tangent spaces of the manifold has a Lipschitz continuous gradient, which allows one to follow the analyses of the corresponding methods for unconstrained smooth optimization. However, such an approach breaks down when $f$ is nonsmooth, as the gradient of the pullback of $f$ may not exist.

\paragraph{Nonsmooth optimization over Riemannian manifold}

In contrast to Riemannian smooth optimization, Riemannian nonsmooth optimization is relatively less explored~\cite{absil2019collection}. In the following, we briefly review some state-of-the-art results in this area and explain their limitations and connections to our results.

\emph{Riemannian nonsmooth optimization with geodesic convexity}.  Recently, the works \cite{ferreira1998subgradient,bento2017iteration,ferreira2019iteration,Sra-1st-order-geodesically-convex-2016} study the convergence behavior of Riemannian subgradient-type methods when the objective function is geodesically convex over a Riemannian manifold. Thanks to the availability of a geodesic version of the convex subgradient inequality, the conventional analysis for convex optimization in the Euclidean space can be carried over to geodesically convex optimization over a Riemannian manifold. In particular, an asymptotic convergence result is first established in \cite{ferreira1998subgradient}, while a global convergence rate of $\calO(k^{-\frac{1}{2}})$ is established in \cite{bento2017iteration,ferreira2019iteration}, for the Riemannian subgradient method. The work \cite{Sra-1st-order-geodesically-convex-2016} considers the setting where the objective function is geodesically strongly convex over the Riemannian manifold and shows that the rate can be improved to $\calO(k^{-1})$ for Riemannian projected subgradient methods. Unfortunately, these results are not useful for understanding problem \eqref{eq:stiefel opt problem}. This is because the constraint in~\eqref{eq:stiefel opt problem} is a \emph{compact} manifold, and every continuous function that is geodesically convex on a compact Riemannian manifold can only be a constant; see, e.g., \cite[Proposition 2.2]{bishop1969manifolds} and \cite{yau1974non} .   

\emph{Riemannian gradient sampling algorithms}.  For general Riemannian nonsmooth optimization, the recent works \cite{Hosseini-Uschmajew-2017,hosseini2018line} propose Riemannian gradient sampling algorithms, which are motivated by the gradient sampling algorithms for nonconvex nonsmooth optimization in the Euclidean space \cite{burke2005robust}. As introduced in \cite{Hosseini-Uschmajew-2017,hosseini2018line}, given the current iterate $\mX_k$, a typical Riemannian gradient sampling algorithm first samples some points $\big\{\mX_k^j\big\}_{j=1}^J$ in the neighborhood of $\mX_k$ at which the objective function $f$ is differentiable, where the number of sampled points $J$ usually needs to be larger than the dimension of the manifold $\calM$. Then, to obtain a descent direction, it solves the quadratic program
\e\label{subproblem:subgrad}
{\vxi}_k= -\argmin_{ \mG \in \text{conv}(\calW)} \|\mG\|^2,
\ee
where $\text{conv}(\calW)$ denotes the convex hull of   $ \calW:=\big\{\grad f(\mX_k^1),\ldots,\grad f(\mX_k^J) \big\}$ and $\grad f$ is the Riemannian gradient of $f$ on $\calM$. The update can then be performed via classical retractions on $\calM$ using the descent direction ${\vxi}_k$.  This type of algorithms can potentially be utilized to solve a large class of Riemannian nonsmooth optimization problems. However, they are only known to converge asymptotically without any rate guarantee \cite{Hosseini-Uschmajew-2017,hosseini2018line}. Moreover, in order to tackle problem \eqref{eq:stiefel opt problem} with large $n$ and $r$ using a Riemannian gradient sampling algorithm, one has to sample a large number of Riemannian gradients in each iteration, which makes the subproblem \eqref{subproblem:subgrad} very expensive to solve. By contrast, although we assume that the objective function in~\eqref{eq:stiefel opt problem} has weakly convex components, we can establish the convergence of various Riemannian subgradient-type methods with explicit rate guarantees. In addition, each iteration of those methods involves only the computation of a Riemannian subgradient, which can potentially be much cheaper.

\emph{Two types of proximal point methods}. Another classic approach to tackling Riemannian nonsmooth optimization is to apply proximal point-type methods. The idea is to iteratively compute the proximal mapping of the objective function over the Riemannian manifold \cite{Ferreira-Oliveira-PPA-Manifold-2002,de2016new}. These methods are shown to converge globally at a sublinear rate, based on the so-called sufficient decrease property. However, the main issue with this type of methods is that each subproblem is as difficult as the original problem, which renders them not practical. When specialized to the Stiefel manifold, such a difficulty has been alleviated by some recent advances \cite{chen1811proximal,huang2019riemannian,Chen2020manppa}. Specifically, they propose to compute the proximal mapping over the tangent space instead of over the Stiefel manifold, which results in a linearly constrained convex subproblem that is much easier to solve than the original problem. They also prove that the new algorithms converge globally at a sublinear rate. Nonetheless, the subproblem still needs to be solved by an iterative algorithm. By contrast, the methods considered in this paper do not need to solve expensive subproblems except for the computation of one Riemannian subgradient. As such, our overall computational complexities are much lower.

\emph{Splitting-type methods}. There are also splitting-type methods for solving Riemannian nonsmooth optimization problems, such as the manifold ADMM-type algorithms in \cite{Lai2014,Kovnatsky2016}. In this approach, the problem at hand is typically split into two subproblems---one involves optimizing a smooth function over the Riemannian manifold, the other involves optimizing a nonsmooth function without any constraint. These subproblems are then solved in an alternating manner. Despite their simplicity, these methods often do not have any convergence guarantee.

\paragraph{Nonsmooth optimization over Stiefel manifold for specific problems} Finally, we close this subsection by mentioning several problem-specific results. The recent works~\cite{bai2018subgradient} and \cite{zhu2018dual,zhu2019grasssub} propose to use the Riemannian subgradient method to solve the orthogonal DL problem \eqref{eq:ODL sphere} and RSR problem \eqref{eq:dpcp stiefel}, respectively, and establish its local linear convergence when solving these problems. The proofs are based on a certain regularity condition instead of the sharpness property studied in this work. We will give a detailed comparison between the said regularity condition and the sharpness property in \Cref{sec:linear convergence}. For now, it is worth noting that the analyses in \cite{bai2018subgradient,zhu2018dual,zhu2019grasssub} critically depend on the specific model structure of the problem at hand and cannot be easily generalized. By contrast, we develop a more general framework for analyzing Riemannian subgradient-type methods when applied to a family of nonsmooth nonconvex optimization problems over certain compact Riemannian submanifolds, which can yield both global and local convergence guarantees.

\subsection{Notation}
We use  $\T_{\mX}\stiefel :=\{\vxi \in \R^{n\times r}:  \vxi^\top\mX + \mX^\top \vxi  = 0\} $ to denote the tangent space to the Stiefel manifold $\stiefel(n,r)$ at the point $\mX\in \stiefel(n,r)$.  Let $\langle \mA,\mB \rangle = \trace(\mA^\top\mB)$ denote the Euclidean inner product of two matrices $\mA,\mB$ of the same dimensions and $\|\mA\|_F=\sqrt{\langle \mA,\mA \rangle}$ denote the Frobenius norm of $\mA$. We endow the Stiefel manifold $\stiefel(n,r)$ with the Riemannian metric inherited from the Euclidean inner product; i.e., $\langle\mX, \mY \rangle = \trace(\mX^\top \mY)$ for any $\mX,\mY\in \T_{\mZ} \stiefel$ and $\mZ\in\stiefel(n,r)$. 
For a closed set $\mathcal{C} \subseteq\R^{n\times r}$, we use $\calP_{\calC}$ to denote the orthogonal projector onto $\calC$ and $\dist(\mX,\calC):=\inf_{\mY\in\calC} \|\mX - \mY\|_F$ to denote the distance between $\mX$ and $\calC$. We use $x \lesssim y$ and $x \gtrsim y$ to denote $x \le cy$ and $x \ge cy$ for some universal constant $c$, respectively.

\section{Preliminaries}

In this section, we first review some basic notions in Riemannian optimization and then present the Riemannian subgradient-type algorithms for solving problem \eqref{eq:stiefel opt problem}.

\subsection{Optimization over Stiefel manifold}
\label{sec:opt over stiefel}

\paragraph{Riemannian subgradient and first-order optimality condition}  

By our assumption, the objective function $f$ in~\eqref{eq:stiefel opt problem} is $\tau$-weakly convex for some $\tau\ge0$; i.e, there exists a convex function $g:\R^{n\times r} \rightarrow \R$ such that $f(\mX) = g(\mX) - \frac{\tau}{2} \|\mX\|_F^2$ for any $\mX\in\R^{n\times r}$ \cite[Proposition 4.3]{V83}. Although $f$ may not be convex, we may define its (Euclidean) subdifferential $\partial f$ via 
\e\label{eq:Euclidean subdifferential}
\partial f(\mX) = \partial g(\mX) - \tau \mX, \ \ \forall \mX \in \R^{n\times r};
\ee
see \cite[Proposition 4.6]{V83}. Note that since $g$ is convex, $\partial g$ is simply its usual convex subdifferential. Hence, the subdifferential $\partial f$ in~\eqref{eq:Euclidean subdifferential} is well defined.

Using the properties of weakly convex functions in~\cite[Proposition 4.5]{V83} and the result in~\cite[Theorem 5.1]{yang2014optimality}, the Riemannian subdifferential $\partial_{\calR} f$ of $f$ on the Stiefel manifold $\stiefel(n,r)$ is given by
\e\label{eq:Riemannian subdifferential}
\partial_{\calR}f(\mX) = \calP_{\T_{\mX}\stiefel} (\partial f(\mX)), \ \ \forall \mX \in \stiefel(n,r).
\ee
In particular, given an Euclidean subgradient $\widetilde \nabla f(\mX) \in \partial f(\mX)$ of $f$ at $\mX\in\stiefel(n,r)$, we obtain a corresponding Riemannian subgradient $\widetilde \nabla_{\calR} f(\mX) \in \partial_{\calR}f(\mX)$ through $\widetilde \nabla_{\calR} f(\mX) = \calP_{\T_{\mX}\stiefel} (\widetilde \nabla f(\mX))$. Recall that for any $\mB\in \R^{n\times r}$, the projection of $\mB$ onto $\T_{\mX}\stiefel$ is given by $\calP_{\T_{\mX}\stiefel} (\mB)=  \mB -  \frac{1}{2} \mX \left(   \mB^\top  \mX  +   \mX^\top \mB    \right)$~\cite[Example 3.6.2]{absil2009optimization}. 

Using~\eqref{eq:Riemannian subdifferential}, we call $\mX \in \stiefel(n,r)$ a \emph{stationary point} of problem \eqref{eq:stiefel opt problem} if it satisfies the following first-order optimality condition:
\e\label{eq:first order optimality}
\bm{0} \in  \partial_{\calR}f(\mX).
\ee

\paragraph{Retractions on Stiefel manifold} 

To enable search along curves on the Stiefel manifold, we need the notion of a retraction (see~\cite[Definition 4.1.1]{absil2009optimization} for the definition). There are four commonly used retractions on the Stiefel manifold. These include the exponential map \cite{edelman1998geometry} and those based on the $QR$ decomposition, Cayley transformation~\cite{wen2013feasible}, and polar decomposition. It is mentioned in \cite{chen1811proximal} that among the above four retractions, the polar decomposition-based one is the most efficient in terms of computational complexity. Therefore, we shall focus on polar decomposition-based retraction, which is given by 
\e\label{eq:polar retraction}
\Retr_{\mX} (\vxi) = (\mX + \vxi ) (\mId_r + \vxi^\top\vxi)^{-\frac{1}{2}}.
\ee
However, we remark that our results also apply to the other three retractions; see \Cref{sec:extension} for a detailed discussion.

As the following lemma shows, given any $\mX \in \stiefel(n,r)$ and $\vxi\in\T_{\mX}\stiefel$, the polar decomposition-based retraction at $\mX$ essentially computes the projection of $\mX+\vxi$ onto $\stiefel(n,r)$. Moreover, this projection has a Lipschitz-like behavior, even though $\stiefel(n,r)$ is nonconvex.

\begin{lem}\label{lem:nonexpansive polar retra}
	Let $\mX\in \stiefel(n,r)$ and $\vxi\in \T_{\mX}\stiefel$ be given. Consider the point $\mX^+ = \mX + \vxi$. Then, the polar decomposition-based retraction \eqref{eq:polar retraction} satisfies $ \Retr_{\mX}(\vxi) = \mX^+\left(\mX^{+\top}\mX^+\right)^{-\frac{1}{2}}  = \calP_{\stiefel} ( \mX^+) $ and  
	\[
	\left\|\Retr_{\mX}(\vxi)- \overline \mX \right\|_F \leq \left\|  \mX^{+} - \overline \mX \right\|_F = \left\|  \mX + \vxi - \overline \mX \right\|_F, \ \ \forall \ \overline \mX\in \stiefel(n,r).
	\]
\end{lem}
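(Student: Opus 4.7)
I would prove the two assertions in sequence. For the identity $\Retr_{\mX}(\vxi) = \mX^+(\mX^{+\top}\mX^+)^{-1/2}$, the only computation needed is $\mX^{+\top}\mX^+ = \mId_r + \vxi^\top\vxi$, which follows by expanding $(\mX+\vxi)^\top(\mX+\vxi)$ and applying $\mX^\top\mX = \mId_r$ together with the tangent-space condition $\mX^\top\vxi + \vxi^\top\mX = \bm{0}$. Since $\mId_r + \vxi^\top\vxi \succeq \mId_r$, the matrix $\mX^+$ has full column rank, so I would invoke the standard fact that the nearest Stiefel matrix to any full-column-rank $\mA$ is the orthogonal factor $\mA(\mA^\top\mA)^{-1/2}$ of its polar decomposition; this yields $\Retr_{\mX}(\vxi) = \calP_{\stiefel}(\mX^+)$.

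For the inequality, I would set $\mU_P := \calP_{\stiefel}(\mX^+)$ and $\mP := (\mX^{+\top}\mX^+)^{1/2}$, so that $\mX^+ = \mU_P\mP$ and therefore $\mX^+ - \mU_P = \mU_P(\mP - \mId_r)$. Inserting $\mU_P$ as an intermediate point gives
\[
\|\mX^+ - \overline{\mX}\|_F^2 = \|\mX^+ - \mU_P\|_F^2 + 2\langle \mX^+ - \mU_P,\; \mU_P - \overline{\mX}\rangle + \|\mU_P - \overline{\mX}\|_F^2,
\]
so dropping the non-negative first term reduces the claim to the non-negativity of the cross term.

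The main obstacle, and the only nontrivial step, is verifying this non-negativity. Substituting $\mX^+ - \mU_P = \mU_P(\mP - \mId_r)$ rewrites the cross term as $\trace\!\bigl((\mP - \mId_r)(\mId_r - \mU_P^\top\overline{\mX})\bigr)$. I would close this with two observations: (i)~$\mP - \mId_r \succeq \bm{0}$, which follows from $\mP^2 = \mId_r + \vxi^\top\vxi \succeq \mId_r$ together with operator monotonicity of the matrix square root; and (ii)~since $\mP - \mId_r$ is symmetric, only the symmetric part of $\mId_r - \mU_P^\top\overline{\mX}$---namely $\mId_r - \tfrac{1}{2}(\mU_P^\top\overline{\mX} + \overline{\mX}^\top\mU_P)$---contributes to the trace, and it is PSD because $\mU_P$ and $\overline{\mX}$ both have orthonormal columns, forcing the spectral norm of $\mU_P^\top\overline{\mX}$ to be at most $1$. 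The trace of a product of two symmetric PSD matrices is non-negative, which completes the argument.
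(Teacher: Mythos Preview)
Your proof is correct, but it takes a genuinely different route from the paper's. The paper's key observation is that the convex hull of $\stiefel(n,r)$ is the operator-norm unit ball $H=\{\mY:\|\mY\|_2\le 1\}$, and that because all singular values of $\mX^+$ are at least $1$ (from $\mX^{+\top}\mX^+ = \mId_r + \vxi^\top\vxi$), one has $\calP_{\stiefel}(\mX^+)=\calP_H(\mX^+)$. Since $\overline{\mX}\in\stiefel(n,r)\subset H$ already lies in $H$, the inequality then follows in one line from the $1$-Lipschitz property of projection onto a closed convex set. Your argument instead bypasses the convex hull entirely and works directly with the polar factorization $\mX^+ = \mU_P\mP$: you reduce the inequality to showing $\trace\bigl((\mP-\mId_r)\,\mathrm{sym}(\mId_r-\mU_P^\top\overline{\mX})\bigr)\ge 0$, and close it by verifying that both factors are symmetric PSD. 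The paper's route is shorter and more conceptual (it explains \emph{why} the retraction behaves like a convex projection here), while yours is more self-contained---it does not require knowing the convex hull of the Stiefel manifold or invoking Hoffman--Wielandt, only elementary spectral facts about $\mP$ and $\mU_P^\top\overline{\mX}$.
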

\begin{proof}
	It is well known that the convex hull of the Stiefel manifold $\stiefel(n,r)$ is given by $H \equiv H(n,r):=\{ \mY\in \R^{n\times r}: \|\mY\|_2\leq 1\}$, where $\|\mY\|_2$ denotes the spectral norm (i.e. the largest singular value) of $\mY$; see, e.g.,~\cite{Journee-Nesterov-sparsePCA-JMLR-2010}. Let us first show that $ \Retr_{\mX}(\vxi) = \calP_{\stiefel} \left(\mX^{+} \right) =  \calP_{H} \left(\mX^{+} \right)$. Let $\mX^{+} = \mU\mSigma\mV^\top$ be an SVD of $\mX^+$. Since $\vxi \in \T_{\mX}\stiefel$, we have $\mX^{+\top}\mX^{+} = \mId_{r} + \vxi^\top\vxi$, which implies that all the singular values of $\mX^{+}$ are at least $1$. This, together with the Hoffman-Wielandt Theorem for singular values (see, e.g.,~\cite{stewart1990matrix}), implies that $\calP_{\stiefel} \left(\mX^{+} \right) =  \calP_{H} \left(\mX^{+} \right) = \mU\mV^\top$, as desired.
	
	Now, observe that $\Retr_{\mX}(\vxi) =  \mX^+\left(\mX^{+\top}\mX^+\right)^{-\frac{1}{2}} = \mU\mV^\top$ and $\overline \mX\in H(n,r)$. Hence, we have $\left\|\Retr_{\mX}(\vxi)- \overline \mX \right\|_F   = \left\| \calP_{H} \left(\mX^{+} \right) -  \calP_{H} \left( \overline \mX \right) \right\|_F$. Upon noting that projections onto closed convex sets are 1-Lipschitz, the proof is complete.
\end{proof}

\subsection{A family of Riemannian subgradient-type methods}\label{subsec:methods}


\paragraph{Riemannian subgradient method}
We begin by revisiting the Riemannian gradient method for smooth optimization over the Stiefel manifold.  Let $h:\R^{n\times r} \rightarrow \R$ be a smooth function and consider 
\[
\begin{array}{l}
	\displaystyle\minimize_{\mX \in \R^{n\times r}} \ h(\mX) \\
	\noalign{\smallskip}
	\st \mX\in \stiefel(n,r).
\end{array}
\]
A generic Riemannian gradient method for solving the above problem is given by
\[
\mX_{k + 1} = \Retr_{\mX_{k}} (\vxi_{k}) \quad \text{with} \quad \vxi_{k} = - \gamma_k \grad h(\mX_{k}),
\]
where $\grad h(\mX_{k})$ is the Riemannian gradient of $h$ at $\mX_k$, $\gamma_k>0$ is the stepsize, and $\Retr$ is any retraction on the Stiefel manifold; see, e.g.,~\cite[Section 4.2]{absil2009optimization}. Since problem~\eqref{eq:stiefel opt problem} involves a possibly nonsmooth objective function, one approach to tackling it is to apply a natural generalization of the Riemannian gradient method, namely the Riemannian subgradient method:
\e\label{eq:rgd update}
\boxed{
	\mX_{k + 1} = \Retr_{\mX_{k}} (\vxi_{k}) \quad \text{with} \quad \vxi_{k} = - \gamma_k \widetilde \nabla_{\calR}f(\mX_{k}).
}
\ee 
Here, recall that $\widetilde \nabla_{\calR}f(\mX_{k}) \in \partial_{\calR}f(\mX_k)$ is a Riemannian subgradient of $f$ at $\mX_k\in\stiefel(n,r)$, which can be obtained by taking $\widetilde \nabla f(\mX) \in \partial f(\mX)$ and setting $\widetilde \nabla_{\calR}f(\mX) =  \calP_{\T_{\mX}\stiefel} (\widetilde \nabla f(\mX))$; see~\Cref{sec:opt over stiefel}.

\paragraph{Riemannian incremental and stochastic subgradient methods} 
Recall that the objective function in~\eqref{eq:stiefel opt problem} has the \emph{finite-sum structure} $f = \tfrac{1}{m} \sum_{i=1}^m f_i$. In many modern machine learning tasks, the number of components $m$ can be very large. Thus, it is not desirable to evaluate the full Riemannian subgradient of $f$. This motivates us to introduce two variants of the Riemannian subgradient method \eqref{eq:rgd update}, namely the Riemannian incremental subgradient method and Riemannian stochastic subgradient method, to better exploit the finite-sum structure in~\eqref{eq:stiefel opt problem}. Let us now give a high-level description of these two methods.

Starting with the current iterate $\mX_k$, both methods generate a sequence of $m$ inner iterates $\mX_{k,1},\ldots, \mX_{k,i}, \ldots, \mX_{k,m}$ via 
\e\label{eq:rigd update}
\boxed{
	\mX_{k,i} = \Retr_{\mX_{k,i-1}} (\vxi_{k,i-1}) \quad \text{with} \quad \vxi_{k,i-1} = -\gamma_k \widetilde \nabla_{\calR} f_{\ell_i}(\mX_{k,i-1})
}
\ee 
with $\mX_{k,0}=\mX_k$, where $f_{\ell_i}$ is selected from $\{f_1,\ldots,f_m\}$ according to a certain rule. The next iterate $\mX_{k+1}$ is then obtained by setting $\mX_{k+1} = \mX_{k,m}$. The difference between the incremental and stochastic methods lies in the rule for selecting the component function $f_{\ell_i}$.  In particular, 
\begin{itemize}
	\item \emph{Riemannian incremental subgradient method} picks the component function $f_{\ell_i}$ \emph{sequentially} from $f_1$ to $f_m$---i.e., $\vxi_{k,i-1} = -\gamma_k \widetilde \nabla_{\calR} f_{i}(\mX_{k,i-1})$;
	\item \emph{Riemannian stochastic subgradient method} picks the component function $f_{\ell_i}$ \emph{independently} and \emph{uniformly} from $\{f_1, \ldots, f_m\}$ in each inner iteration \eqref{eq:rigd update}---i.e., $\vxi_{k,i-1} = -\gamma_k \widetilde \nabla_{\calR} f_{\ell_i}(\mX_{k,i-1})$ with $\ell_i \sim_{i.i.d.}\mathrm{Uniform} \big(\{1, \ldots, m\} \big) $.
\end{itemize}

\section{Riemannian Subgradient Inequality over Stiefel Manifold} \label{sec:Riemannian subgradient inequality}

Naturally, we are interested in the convergence behavior of the Riemannian subgradient-type methods introduced in \Cref{subsec:methods} when applied to problem~\eqref{eq:stiefel opt problem}. Towards that end, let us derive a useful inequality, which we call the \emph{Riemannian subgradient inequality}, for restrictions of weakly convex functions on the Stiefel manifold. The main motivation for deriving such an inequality is that an analogous one for weakly convex functions in the Euclidean space, known as the \emph{weakly convex inequality}, plays a fundamental role in the convergence analysis of subgradient-type methods for solving weakly convex minimization problems~\cite{li2018nonconvex,davis2018subgradient,davis2019stochasticmodel,li2019incremental}. To begin, recall that for a $\tau$-weakly convex function $h:\R^{n\times r} \rightarrow \R$, the weakly convex inequality states that
\e\label{eq:Euclidean weak cvx}
\begin{split}
h(\mY) &\geq h(\mX) + \left\langle  \widetilde \nabla h(\mX) , \mY - \mX \right\rangle - \frac{\tau}{2} \| \mY - \mX \|_F^2, \\
&\qquad \forall \ \widetilde \nabla h(\mX) \in \partial h(\mX); \ \mX,\mY \in \R^{n\times r}
\end{split}
\ee
\!\!\cite[Proposition 4.8]{V83}. 
The following is our extension of the above inequality to one for weakly convex functions that are restricted on the Stiefel manifold.
\begin{thm}[Riemannian Subgradient Inequality]\label{thm:Riemannian subgradient inequality}
	Suppose that $h:\R^{n\times r}\rightarrow\R$ is $\tau$-weakly convex for some $\tau\ge0$. Then, { for any bounded open convex set $\calU$ that contains $\stiefel(n,r)$, there exists a constant $L>0$ such that $h$ is $L$-Lipschitz continuous on $\calU$ and satisfies}
	\e\label{eq:Riemannian subgradient inequality weak cvx}
	\begin{split}
		h(\mY) &\geq h(\mX) + \left\langle  \widetilde \nabla_{\calR} h(\mX)  , \mY-\mX   \right\rangle  - \frac{\tau+L}{2} \|\mY - \mX\|_F^2, \\
		&\qquad \forall \ \widetilde \nabla_{\calR} h(\mX) \in \partial_{\calR} h(\mX); \mX, \mY \in \stiefel(n,r).
	\end{split}
	\ee
\end{thm}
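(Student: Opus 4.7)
My plan is to reduce the Riemannian subgradient inequality to its Euclidean counterpart~\eqref{eq:Euclidean weak cvx} by carefully controlling the component of the Euclidean subgradient that gets projected away onto the tangent space. The starting point is the decomposition
\[
\widetilde{\nabla} h(\mX) = \widetilde{\nabla}_{\calR} h(\mX) + \mN, \quad \mN := \widetilde{\nabla} h(\mX) - \calP_{\T_{\mX}\stiefel}\bigl(\widetilde{\nabla} h(\mX)\bigr),
\]
where $\mN$ lies in the normal space to $\stiefel(n,r)$ at $\mX$. A standard characterization says that this normal space consists precisely of matrices of the form $\mX\mS$ with $\mS \in \R^{r \times r}$ symmetric, so $\mN = \mX \mS$ for some symmetric $\mS$.

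The key geometric observation is that the constraint $\mX^\top \mX = \mY^\top \mY = \mId_r$ forces cancellation in $\langle \mN, \mY - \mX\rangle$. Indeed, expanding $(\mY-\mX)^\top(\mY-\mX)$ gives $\mX^\top \mY + \mY^\top \mX - 2\mId_r = -(\mY-\mX)^\top(\mY-\mX)$, and since $\mS$ is symmetric while $\mX^\top \mY - \mY^\top \mX$ is antisymmetric, I obtain
\[
\langle \mX \mS, \mY - \mX\rangle = \trace\bigl(\mS(\mX^\top \mY - \mId_r)\bigr) = -\tfrac{1}{2}\trace\bigl(\mS (\mY-\mX)^\top(\mY-\mX)\bigr).
\]
Since $(\mY-\mX)^\top(\mY-\mX) \succeq 0$, the trace inequality $|\trace(\mS \mM)| \le \|\mS\|_2 \trace(\mM)$ for symmetric $\mS$ and $\mM \succeq 0$ yields
\[
|\langle \mN, \mY - \mX\rangle| \le \tfrac{1}{2}\|\mS\|_2 \|\mY - \mX\|_F^2 \le \tfrac{1}{2}\|\mN\|_F \|\mY - \mX\|_F^2,
\]
using $\|\mS\|_2 \le \|\mS\|_F = \|\mX \mS\|_F$ (because $\mX$ has orthonormal columns).

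To bound $\|\mN\|_F$ uniformly, I use the Lipschitz behavior of $h$. Writing $h = g - \tfrac{\tau}{2}\|\cdot\|_F^2$ with $g$ convex and finite on $\R^{n\times r}$, I note that any finite convex function is locally Lipschitz on the interior of its domain; so on any bounded open convex set $\calU \supset \stiefel(n,r)$, both $g$ and the quadratic term are Lipschitz, giving a constant $L$ with $\|\widetilde{\nabla} h(\mX)\|_F \le L$ for every $\mX \in \stiefel(n,r)$ and every Euclidean subgradient. Since $\mN$ is an orthogonal projection of $\widetilde{\nabla} h(\mX)$, this forces $\|\mN\|_F \le L$, and therefore $|\langle \mN, \mY - \mX\rangle| \le \tfrac{L}{2}\|\mY-\mX\|_F^2$.

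Combining everything, I invoke~\eqref{eq:Euclidean weak cvx} and substitute the decomposition:
\[
h(\mY) \ge h(\mX) + \langle \widetilde{\nabla}_{\calR} h(\mX), \mY - \mX\rangle + \langle \mN, \mY - \mX\rangle - \tfrac{\tau}{2}\|\mY - \mX\|_F^2,
\]
and the lower bound on $\langle \mN, \mY - \mX\rangle$ derived above produces the desired $-\tfrac{\tau+L}{2}\|\mY-\mX\|_F^2$ term. The main technical hurdle is really just the symmetry-antisymmetry cancellation identity that turns $\langle \mX \mS, \mY - \mX\rangle$ into something quadratic in $\|\mY - \mX\|_F$; the rest is bookkeeping. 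No features of the retraction are needed, which is why the argument should extend to other compact embedded submanifolds whose normal spaces admit a similar algebraic description (as anticipated in \Cref{sec:extension}).
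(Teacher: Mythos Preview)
Your proposal is correct and follows essentially the same route as the paper. Both arguments split the Euclidean subgradient (or, equivalently by self-adjointness of the projector, the difference $\mY-\mX$) into tangential and normal parts, then exploit the identity $\mX^\top\mY+\mY^\top\mX-2\mId_r=-(\mY-\mX)^\top(\mY-\mX)$ to show the normal contribution is $\calO(\|\mY-\mX\|_F^2)$ with constant $L$; the only cosmetic difference is that the paper writes the normal projection via the explicit formula $\calP_{\T_{\mX}\stiefel}^{\perp}(\mB)=\tfrac{1}{2}\mX(\mB^\top\mX+\mX^\top\mB)$ and bounds with Cauchy--Schwarz, whereas you parametrize $\mN=\mX\mS$ and use the PSD trace inequality.
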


Before we proceed to prove \Cref{thm:Riemannian subgradient inequality}, let us highlight the differences between the weakly convex inequality~\eqref{eq:Euclidean weak cvx} and the Riemannian subgradient inequality~\eqref{eq:Riemannian subgradient inequality weak cvx}. First, the former involves elements in the Euclidean subdifferential $\partial h$, while the latter involves elements in the Riemannian subdifferential $\partial_{\calR} h$. Second, the former holds for all pairs of points in the  Euclidean space $\R^{n\times r}$, while the latter only holds for all pairs of points on the Stiefel manifold $\stiefel(n,r)$. Third, the latter involves the extra compensation term $-\tfrac{L}{2} \| \mY - \mX \|_F^2$,  which accounts for the behavior of the restriction of $h$ on the Stiefel manifold $\stiefel(n,r)$.

\begin{proof}[Proof of \Cref{thm:Riemannian subgradient inequality}]
The Lipschitz continuity of $h$ on {$\calU$} follows directly from~\cite[Proposition 4.4]{V83} and {the boundedness of $\calU$}. Since $h$ is $\tau$-weakly convex on $\R^{n\times r}$, for any $\mX,\mY \in \stiefel(n,r) \subseteq \R^{n\times r}$, the inequality~\eqref{eq:Euclidean weak cvx} implies that
	\e\label{eq:cvx ineq}
	\begin{split}
		&h(\mY) \geq h(\mX) + \left\langle  \widetilde \nabla h(\mX) , \mY - \mX \right\rangle - \frac{\tau}{2} \| \mY - \mX \|_F^2 \\
		& = h(\mX) + \left\langle \calP_{\T_{\mX}\stiefel}(\widetilde \nabla h(\mX)) + \calP_{\T_{\mX}\stiefel}^{\perp} (\widetilde \nabla h(\mX)), \mY - \mX \right\rangle - \frac{\tau}{2} \| \mY - \mX \|_F^2,
	\end{split}
	\ee
	where 
	\e\label{eq:proj complement tangent stiefel}
	\calP_{\T_{\mX}\stiefel}^{\perp} (\mB)=   \frac{1}{2} \mX \left(   \mB^\top  \mX  +   \mX^\top \mB    \right), \ \ \forall \ \mB\in\R^{n\times r}
	\ee
	\!\!\cite[Example 3.6.2]{absil2009optimization}. Now, we compute
	\e \label{eq:bound weak convex term}
	\begin{split}
		\left\langle \calP_{\T_{\mX}\stiefel}^{\perp} (\widetilde \nabla h(\mX)), \mY - \mX \right\rangle   
		&= \left\langle \calP_{\T_{\mX}\stiefel}^{\perp} (\widetilde \nabla h(\mX)) ,\calP_{\T_{\mX}\stiefel}^{\perp}  (\mY - \mX) + \calP_{\T_{\mX}\stiefel} (\mY - \mX)  \right\rangle \\
		&= \left\langle \calP_{\T_{\mX}\stiefel}^{\perp} (\widetilde \nabla h(\mX)) ,  \calP_{\T_{\mX}\stiefel}^{\perp}  (\mY - \mX)   \right\rangle \\
		&= \left\langle \widetilde \nabla h(\mX) ,  \calP_{\T_{\mX}\stiefel}^{\perp}  (\mY - \mX)  \right\rangle \\
		&\stackrel{(i)}{=} \frac{1}{2}\left\langle \widetilde \nabla h(\mX) ,  \mX \left(\mY^\top\mX + \mX^\top \mY - 2\mId_{r} \right)  \right\rangle  \\
		&\stackrel{(ii)}{\geq} - \frac{1}{2} \|\widetilde \nabla h(\mX)\|_F \left\| \mY^\top\mX + \mX^\top \mY - 2\mId_{r}  \right\|_F  \\
		&\stackrel{(iii)}{\geq} - \frac{L}{2} \left\| \mY^\top\mX + \mX^\top \mY - 2\mId_{r}  \right\|_F,  
	\end{split}
	\ee
	where ($i$) comes from \eqref{eq:proj complement tangent stiefel}, ($ii$) is due to the fact that $\mX \in \stiefel(n,r)$, and { ($iii$) follows from~\cite[Theorem 9.13]{rockafellar2009variational} and the $L$-Lipschitz continuity of $h$ on $\calU$.} Note that 
	\e\label{eq:orth split}
	\left\| \mY^\top\mX + \mX^\top \mY - 2\mId_{r}  \right\|_F = \left\| (\mX-\mY)^\top  (\mX-\mY)  \right\|_F \leq \| \mX-\mY\|_F^2
	\ee
	since $\mX,\mY \in \stiefel(n,r)$.  Combining \eqref{eq:bound weak convex term} and \eqref{eq:orth split} and recalling \eqref{eq:cvx ineq}, we get
	\[
	h(\mY)  \geq h(\mX) + \left\langle \calP_{\T_{\mX}\stiefel}(\widetilde \nabla h(\mX)) , \mY - \mX \right\rangle - \frac{\tau+L}{2} \| \mY - \mX \|_F^2.
	\]
	Since $\mX,\mY\in \stiefel(n,r)$, $\widetilde \nabla h(\mX) \in \partial h(\mX)$ are arbitrary and $\partial_{\calR}h(\mX) = \calP_{\T_{\mX}\stiefel} (\partial h(\mX))$ (see \eqref{eq:Riemannian subdifferential}), the proof is complete.
\end{proof}

As we shall see in subsequent sections, the Riemannian subgradient inequality plays a similar role to the weakly convex inequality and allows us to connect the analysis of Riemannian subgradient-type methods with that of their Euclidean counterparts. In particular, equipped with \Cref{thm:Riemannian subgradient inequality}, we can obtain the iteration complexities of the Riemannian subgradient-type methods introduced in \Cref{subsec:methods} for addressing problem \eqref{eq:stiefel opt problem}. Moreover, if problem \eqref{eq:stiefel opt problem} further possesses certain sharpness property (see \Cref{def:sharpness}), then the aforementioned methods with geometrically diminishing stepsizes and a proper initialization will achieve local linear convergence to the set of so-called weak sharp minima (again, see \Cref{def:sharpness}).

Although the Riemannian subgradient inequality in \Cref{thm:Riemannian subgradient inequality} focuses on the Stiefel manifold, it can be extended to a class of compact embedded submanifolds of the Euclidean space. We shall present such an extension in \Cref{sec:extension}.

\section{Global Convergence} \label{sec:global convergence}

In this section, we study the iteration complexities of Riemannian subgradient-type methods for solving problem \eqref{eq:stiefel opt problem}. Our analysis relies on the Riemannian subgradient inequality in \Cref{thm:Riemannian subgradient inequality}. 

\subsection{Surrogate stationarity measure\label{sec:surrogate stationary measure}}
In classical Euclidean nonsmooth convex optimization, the iteration complexities of subgradient-type methods are typically presented in terms of the suboptimality gap $f(\mX_k) -\min f$; see, e.g., \cite[Theorem 3.2.2]{Nesterov:2014:ILC:2670022}, \cite[Proposition 2.3]{nedic2001convergence}. On the other hand, in Riemannian smooth optimization, which typically involves nonconvex constraints, the iteration complexities of various methods can be expressed in terms of the continuous stationarity measure $\|\text{grad} f(\mX_k)\|_F$~\cite{boumal2018global}.  However, for the Riemannian nonsmooth optimization problem \eqref{eq:stiefel opt problem}, neither the suboptimality gap $f(\mX_k) -\min f$ (due to nonconvexity) nor the minimum-norm Riemannian subgradient $\dist \left(0, \partial_{\calR}f(\mX_k) \right) $ (due to nonsmoothness) is an appropriate stationarity measure. Therefore, in order to establish the iteration complexities of Riemannian subgradient-type methods, we need to find a surrogate stationarity measure that can track the progress of those methods. 

Towards that end, we borrow ideas from the recent works~\cite{davis2019stochasticmodel,drusvyatskiy2018efficiency} on weakly convex minimization in the Euclidean space, which propose to use the gradient of the Moreau envelope of the weakly convex function at hand as a surrogate stationarity measure. To begin, let us define, for any $\lambda>0$, the following analogs of the Moreau envelope and proximal mapping for problem~\eqref{eq:stiefel opt problem}, which take into account the effect of the Stiefel manifold constraint on the problem:
\e\label{eq:Moreau envelope}
\left\{
\begin{array}{l@{\ \ \ }l}
\displaystyle f_{\lambda}(\mX) = \min_{\mY\in \stiefel(n,r)} \left\{ f(\mY) + \frac{1}{2\lambda}  \|\mY - \mX\|_F^2 \right\}, & \mX\in\stiefel(n,r),\\
\noalign{\smallskip}
\displaystyle  P_{\lambda f} (\mX) \in \argmin_{\mY\in \stiefel(n,r)} \left\{ f(\mY) + \frac{1}{2\lambda}  \|\mY - \mX\|_F^2 \right\}, & \mX\in\stiefel(n,r).
\end{array}
\right.
\ee
We remark that the Moreau envelope and proximal mapping defined above differ from those in~\cite{Ferreira-Oliveira-PPA-Manifold-2002} in that the proximal term $\mY \mapsto \tfrac{1}{2\lambda}\|\mY-\mX\|_F^2$ is based on the Euclidean distance rather than the geodesic distance. This will facilitate our later analysis.

By \eqref{eq:Riemannian subdifferential} and \eqref{eq:first order optimality}, the point $P_{\lambda f} (\mX)$ satisfies the first-order optimality condition $ \bm{0} \in \partial_{\calR} f \left(P_{\lambda f} (\mX)\right) + \tfrac{1}{\lambda} \calP_{\T_{P_{\lambda f} (\mX)}\stiefel} \left( P_{\lambda f} (\mX) - \mX\right) $. It follows that
\e \label{eq:surrogate optimality}
\boxed{
	\begin{split}
		\dist \left( \bm{0},\partial_{\calR} f \left(P_{\lambda f} (\mX)\right) \right) &\leq \lambda^{-1}\cdot \left\| \calP_{\T_{P_{\lambda f} (\mX)}\stiefel} \left( P_{\lambda f} (\mX) - \mX\right) \right\|_F \\
		&\leq \lambda^{-1} \cdot \left\|  P_{\lambda f} (\mX) - \mX \right\|_F =: \Theta(\mX).
	\end{split}
}
\ee
In particular, we see from~\eqref{eq:first order optimality} that $\mX\in\stiefel(n,r)$ is a stationary point of problem \eqref{eq:stiefel opt problem} when $\Theta(\mX) = 0$. This motivates us to use $\mX \mapsto \Theta(\mX)$ as a surrogate stationarity measure of problem \eqref{eq:stiefel opt problem} and call $\mX\in\stiefel(n,r)$ an \emph{$\varepsilon$-nearly stationary point} of problem \eqref{eq:stiefel opt problem} if it satisfies $\Theta(\mX)\leq \varepsilon$.

The careful reader may note that the proximal mapping $P_{\lambda f}$ in \eqref{eq:Moreau envelope} needs not yield a unique point at a given $\mX\in\stiefel(n,r)$. Nevertheless, for the purpose of defining the surrogate stationarity measure, we can choose any point returned by $P_{\lambda f}$ at $\mX$, as each of them plays exactly the same role in our analysis and will satisfy the convergence rate bounds in \Cref{thm:rigd global rate}. 

\subsection{Riemannian subgradient and incremental subgradient methods} \label{subsec:rsm}

Using the surrogate stationarity measure $\Theta$, we are now ready to establish the iteration complexities of the Riemannian subgradient and incremental subgradient methods. We will focus on analyzing the Riemannian incremental subgradient method, as the Riemannian subgradient method can be regarded as its special case where there is only one (i.e., $m=1$) component function. 

To begin, let us establish a relationship between the surrogate stationarity measure $\Theta$ and the sufficient decrease of the Moreau envelope $f_\lambda$.
\begin{prop}\label{prop:rigd global rate}
Suppose that each component function $f_i:\R^{n\times r}\rightarrow\R$ ($i=1,\ldots,m$) in problem~\eqref{eq:stiefel opt problem} is $\tau$-weakly convex on $\R^{n\times r}$  for some $\tau\ge0$. { Let $\mathcal{U}$ be any bounded open convex set that contains $\stiefel(n,r)$.} Furthermore, let $\{\mX_k\}$ be the sequence generated by the Riemannian incremental subgradient method \eqref{eq:rigd update} with an arbitrary initialization for solving problem \eqref{eq:stiefel opt problem}. Then, for any $\lambda < \tfrac{1}{2(L+\tau)}$ in \eqref{eq:Moreau envelope}, we have for any $k\geq 0$
	\[
	m \gamma_k  \Theta^2(\mX_k)  \leq \frac{ 2\big( f_{\lambda}\left(\mX_{k}\right)  - f_{\lambda} (\mX_{k+1}) \big)  +  \frac{\gamma_k^2 L^2}{\lambda}m^2 + \frac{\gamma_k^3L^2(L+\tau)}{\lambda} C(m) } { 2 \lambda \left(\frac{1}{2\lambda} - (L+\tau)\right) },
	\]
	where { $L>0$ is an upper bound on the Lipschitz constants of $f_1,\ldots,f_m$ on $\calU$} and $C(m) = \frac{1}{3}m(m-1)(2m-1)$.
\end{prop}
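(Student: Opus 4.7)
The plan is to let $\widehat{\mX}_k := P_{\lambda f}(\mX_k)$ denote the proximal point at $\mX_k$, so that $f_\lambda(\mX_k) = f(\widehat{\mX}_k) + \tfrac{1}{2\lambda}\|\widehat{\mX}_k - \mX_k\|_F^2$ and $\|\widehat{\mX}_k - \mX_k\|_F = \lambda\,\Theta(\mX_k)$. Since $\widehat{\mX}_k \in \stiefel(n,r)$ is a feasible competitor in the definition of $f_\lambda(\mX_{k+1})$, the three-point inequality immediately yields
\begin{equation*}
2\lambda\bigl[f_\lambda(\mX_k) - f_\lambda(\mX_{k+1})\bigr] \;\ge\; \|\widehat{\mX}_k - \mX_k\|_F^2 - \|\widehat{\mX}_k - \mX_{k+1}\|_F^2.
\end{equation*}
The whole argument then reduces to producing a lower bound on the right-hand side in terms of $m\gamma_k\Theta^2(\mX_k)$ and controllable error terms.

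To that end, I would analyze the inner sweep $\mX_k = \mX_{k,0} \to \cdots \to \mX_{k,m} = \mX_{k+1}$ step by step. Taking $\overline{\mX} = \widehat{\mX}_k$ in \Cref{lem:nonexpansive polar retra}, expanding the square, and invoking \Cref{thm:Riemannian subgradient inequality} applied to $f_i$ at $\mX_{k,i-1}$ yields, for each $i$,
\begin{equation*}
\|\mX_{k,i} - \widehat{\mX}_k\|_F^2 \le \|\mX_{k,i-1} - \widehat{\mX}_k\|_F^2 - 2\gamma_k\bigl[f_i(\mX_{k,i-1}) - f_i(\widehat{\mX}_k)\bigr] + \gamma_k(L+\tau)\|\mX_{k,i-1} - \widehat{\mX}_k\|_F^2 + \gamma_k^2 L^2,
\end{equation*}
where the bound $\|\widetilde{\nabla}_{\calR} f_i\|_F \le L$ follows from the nonexpansivity of the tangent projection combined with the $L$-Lipschitz continuity of $f_i$ on $\calU$. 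Telescoping this recursion over $i = 1,\ldots,m$ places $\|\mX_k - \widehat{\mX}_k\|_F^2 - \|\mX_{k+1} - \widehat{\mX}_k\|_F^2$ on one side of an inequality whose right-hand side contains $2\gamma_k\sum_i[f_i(\mX_{k,i-1}) - f_i(\widehat{\mX}_k)]$ minus the accumulated error sums.

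Next, I would convert the sums into quantities depending only on $\mX_k$ and $\widehat{\mX}_k$. Iterating \Cref{lem:nonexpansive polar retra} with $\overline{\mX} = \mX_k$ gives the drift bound $\|\mX_{k,i-1} - \mX_k\|_F \le (i-1)\gamma_k L$; the Lipschitz property then yields $\sum_i[f_i(\mX_{k,i-1}) - f_i(\mX_k)] \ge -\tfrac{1}{2}\gamma_k L^2 m(m-1)$, while trivially $\sum_i[f_i(\mX_k) - f_i(\widehat{\mX}_k)] = m[f(\mX_k) - f(\widehat{\mX}_k)]$. Applying the splitting $\|\mX_{k,i-1} - \widehat{\mX}_k\|_F^2 \le 2\|\mX_{k,i-1} - \mX_k\|_F^2 + 2\|\mX_k - \widehat{\mX}_k\|_F^2$ to the quadratic sum produces the two stated error contributions $\gamma_k^2 L^2 m^2$ (after merging with the $m\gamma_k^2 L^2$ from the earlier telescoping) and $\gamma_k^3 L^2(L+\tau)C(m)$ with $C(m) = 2\sum_{i=1}^m(i-1)^2 = \tfrac{1}{3}m(m-1)(2m-1)$, together with a residual contribution $2m\gamma_k(L+\tau)\|\mX_k - \widehat{\mX}_k\|_F^2$ that must be tracked separately.

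The final step is to translate the function-value gap into the surrogate stationarity measure via the proximal inequality $f(\mX_k) - f(\widehat{\mX}_k) \ge \tfrac{1}{2\lambda}\|\mX_k - \widehat{\mX}_k\|_F^2 = \tfrac{\lambda}{2}\Theta^2(\mX_k)$. Substituting this together with $\|\mX_k - \widehat{\mX}_k\|_F^2 = \lambda^2\Theta^2(\mX_k)$ into the accumulated inequality, and transferring the residual term $2m\gamma_k(L+\tau)\lambda^2\Theta^2(\mX_k)$ to the left, collects the coefficient $m\gamma_k\lambda\bigl[1 - 2\lambda(L+\tau)\bigr]\Theta^2(\mX_k)$. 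Since $1 - 2\lambda(L+\tau) = 2\lambda\bigl(\tfrac{1}{2\lambda} - (L+\tau)\bigr)$, dividing through by $\lambda$ and by this coefficient yields exactly the claimed bound, the assumption $\lambda < \tfrac{1}{2(L+\tau)}$ being used precisely to ensure positivity. The main bookkeeping obstacle throughout is the spurious $\|\mX_k - \widehat{\mX}_k\|_F^2$ term introduced by the splitting: making it absorb correctly into the $\Theta^2(\mX_k)$ coefficient on the left, rather than inflating the error side, is what forces the smallness condition on $\lambda$.
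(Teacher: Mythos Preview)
Your proposal is correct and follows essentially the same route as the paper's proof: both start from the optimality of $P_{\lambda f}(\mX_{k+1})$ to get $f_\lambda(\mX_{k+1}) \le f(\widehat{\mX}_k) + \tfrac{1}{2\lambda}\|\widehat{\mX}_k - \mX_{k+1}\|_F^2$, bound the inner-sweep distances via \Cref{lem:nonexpansive polar retra} and \Cref{thm:Riemannian subgradient inequality}, use the drift estimate $\|\mX_{k,i-1}-\mX_k\|_F \le (i-1)\gamma_k L$ and the same $(a+b)^2\le 2a^2+2b^2$ splitting, and finally invoke $f(\mX_k)-f(\widehat{\mX}_k)\ge \tfrac{1}{2\lambda}\|\mX_k-\widehat{\mX}_k\|_F^2$. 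The only cosmetic difference is that the paper packages the per-step recursion as an inductive claim before setting $l=m$, whereas you telescope directly; the bookkeeping and resulting constants are identical.
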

\begin{proof}
	According to  \eqref{eq:Moreau envelope}, we have
	\e\label{eq:subgradient sublinear recursion 1}
	\begin{split}
		f_{\lambda} (\mX_{k+1}) &= f\left(P_{\lambda f} (\mX_{k+1}) \right) + \frac{1}{2\lambda}  \left\| P_{\lambda f} (\mX_{k+1}) - \mX_{k+1} \right\|_F^2\\
		&\leq f\left(P_{\lambda f} (\mX_{k})\right) + \frac{1}{2\lambda}  \left\| P_{\lambda f} (\mX_{k}) - \mX_{k+1} \right\|_F^2,
	\end{split}
	\ee
	where the last inequality follows from the optimality of $P_{\lambda f} (\mX_{k+1})$ and the fact that $P_{\lambda f} (\mX_{k})\in \stiefel(n,r)$. We claim that for $l=1,\ldots,m$,
	\e\label{eq:iterates to prox mapping dist}
	\begin{split}
		\left\| P_{\lambda f} (\mX_{k}) - \mX_{k,l} \right\|_F^2 & \leq \left\|  \mX_{k} - P_{\lambda f} (\mX_{k})  \right\|_F^2 - 2\gamma_k \sum_{i=1}^{l} \left(  f_i(\mX_{k,i-1}) - f_i\left( P_{\lambda f} (\mX_{k}) \right)  \right) \\
		&\quad + \gamma_k (L+\tau) \sum_{i=1}^{l} \left\|  \mX_{k,i-1} - P_{\lambda f} (\mX_{k})  \right\|_F^2 + l\gamma_k^2 L^2.
	\end{split}
	\ee
	The proof is by induction on $l$. For $l=1$, recalling that $\mX_{k,0} = \mX_k$, we compute
	\e\label{eq:first induction recursion}
	\begin{split}
		\left\| P_{\lambda f} (\mX_{k}) - \mX_{k,1} \right\|_F^2 & \leq \left\|  \mX_{k} +\vxi_{k,0} - P_{\lambda f} (\mX_{k})  \right\|_F^2 \\
		&\leq  \left\|  \mX_{k}  - P_{\lambda f} (\mX_{k})  \right\|_F^2     - 2\gamma_k \left(  f_1(\mX_{k}) - f_1\left( P_{\lambda f} (\mX_{k}) \right)  \right) \\
		&\quad + \gamma_k (L+\tau)  \left\|  \mX_{k} - P_{\lambda f} (\mX_{k})  \right\|_F^2 + \gamma_k^2 L^2,
	\end{split}
	\ee
	where we used~\eqref{eq:rigd update} and \Cref{lem:nonexpansive polar retra} in the first inequality and \Cref{thm:Riemannian subgradient inequality} and the fact that $\left\| \widetilde \nabla_{\calR} f_i(\mX_{k,i-1})  \right\|_F  \leq  \left\| \widetilde \nabla f_i(\mX_{k,i-1})  \right\|_F   \leq L$ in the second inequality.  The inductive step can be completed by following the same derivations as in \eqref{eq:first induction recursion}. Thus, the claim \eqref{eq:iterates to prox mapping dist} is established. Setting $l =m$ in \eqref{eq:iterates to prox mapping dist} and plugging it into \eqref{eq:subgradient sublinear recursion 1}, we obtain
	\e\label{eq:subgradient sublinear recursion 2}
	\begin{split}
		f_{\lambda} (\mX_{k+1}) &\leq f_{\lambda}\left(\mX_{k}\right)  + \frac{\gamma_k}{\lambda} \sum_{i=1}^{m} \left( f_i\left( P_{\lambda f} (\mX_{k}) \right)  -  f_i(\mX_{k,i-1})  \right) \\
		&\quad + \frac{\gamma_k (L+\tau)}{2\lambda} \sum_{i=1}^{m} \left\|  \mX_{k,i-1} - P_{\lambda f} (\mX_{k})  \right\|_F^2 + \frac{m\gamma_k^2 L^2}{2\lambda},
	\end{split}
	\ee
	where we used the relation $f_{\lambda}\left(\mX_{k}\right) = f\left(P_{\lambda f} (\mX_{k}) \right) + \frac{1}{2\lambda} \left\|  \mX_{k} - P_{\lambda f} (\mX_{k})  \right\|_F^2$ (since $\mX_{k}\in \stiefel(n,r)$).  
	
	Next, we claim that for $i=1,\ldots,m$,
	\e\label{eq:inner iterates distance sitefel}
	\left\| \mX_{k,i-1} - \mX_{k} \right\|_F \leq (i-1) \gamma_k L.
	\ee
	The proof is again by induction on $i$. The claim trivially holds when $i = 1$. Suppose that~\eqref{eq:inner iterates distance sitefel} holds for $i = j$. For $i=j+1$, we compute $ \left\| \mX_{k,j} - \mX_{k} \right\|_F  \leq \| \mX_{k,j-1} + \vxi_{k,j-1} - \mX_k\|_F \leq j \gamma_k L $, where we used~\eqref{eq:rigd update} and \Cref{lem:nonexpansive polar retra} in the first inequality. This completes the inductive step and the proof of the claim.	
	
	With \eqref{eq:inner iterates distance sitefel}, we have
	\e\label{eq:function value bound}
	\begin{split}
		f_i\left( P_{\lambda f} (\mX_{k}) \right)  -  f_i(\mX_{k,i-1}) &=  f_i\left( P_{\lambda f} (\mX_{k}) \right) -f_i(\mX_{k})  + f_i(\mX_{k})  -  f_i(\mX_{k,i-1}) \\
		& \leq  (i-1) \gamma_kL^2 +  f_i\left( P_{\lambda f} (\mX_{k}) \right) -  f_i( \mX_{k})
	\end{split}
	\ee
	and 
	\e\label{eq:iterates bound}
	\begin{split}
		\left\|  \mX_{k,i-1} - P_{\lambda f} (\mX_{k})  \right\|_F^2 &= \left\|  \mX_{k,i-1} -\mX_{k} + \mX_{k} - P_{\lambda f} (\mX_{k})  \right\|_F^2  \\
		& \leq 2(i-1)^2 \gamma_k^2 L^2 + 2 \left\|  \mX_{k} - P_{\lambda f} (\mX_{k})  \right\|_F^2.
	\end{split}
	\ee
	Plugging \eqref{eq:function value bound} and \eqref{eq:iterates bound} into  \eqref{eq:subgradient sublinear recursion 2} yields
	\e\label{eq:subgradient sublinear recursion 3}
	\begin{split}
		f_{\lambda} (\mX_{k+1}) &\leq f_{\lambda}\left(\mX_{k}\right)   +  \frac{m^2 \gamma_k^2 L^2 + \frac{1}{3}m(m-1)(2m-1)\gamma_k^3L^2(L+\tau)}{2\lambda}  \\
		&\quad + \frac{m \gamma_k}{\lambda} \left( f\left( P_{\lambda f} (\mX_{k}) \right)  -  f(\mX_{k})  \right)  + \frac{m \gamma_k (L+\tau)}{\lambda} \left\|  \mX_{k} - P_{\lambda f} (\mX_{k})  \right\|_F^2.
	\end{split}
	\ee
	By definition of the Moreau envelope and proximal mapping  in \eqref{eq:Moreau envelope}, we have
	\e\label{eq:subgradient sublinear recursion 4}
	\begin{split}
		& -\Big[ f(\mX_{k}) - f\left( P_{\lambda f} (\mX_{k}) \right) - (L+\tau) \left\|  \mX_{k} - P_{\lambda f} (\mX_{k})  \right\|_F^2 \Big] \\
		=& -\Bigg[ f(\mX_{k}) - \Big( f\left( P_{\lambda f} (\mX_{k}) \right) + \frac{1}{2\lambda} \left\|  \mX_{k} - P_{\lambda f} (\mX_{k})  \right\|_F^2   \Big)  \\
		& \qquad+ \left(\frac{1}{2\lambda} - (L+\tau) \right)  \left\|  \mX_{k} - P_{\lambda f} (\mX_{k})  \right\|_F^2 \Bigg] \\
		\leq& - \left(\frac{1}{2\lambda} - (L+\tau)\right)  \left\|  \mX_{k} - P_{\lambda f} (\mX_{k})  \right\|_F^2,
	\end{split}
	\ee
	where the last inequality is due to $f_\lambda (\mX_{k}) = f\left( P_{\lambda f} (\mX_{k}) \right) + \frac{1}{2\lambda} \left\|  \mX_{k} - P_{\lambda f} (\mX_{k})  \right\|_F^2$ (since $\mX_{k}\in \stiefel(n,r)$) and $f_\lambda (\mX_{k}) \leq f (\mX_{k})$. Since $\lambda < \tfrac{1}{2(L+\tau)}$ by assumption, the desired result then follows by substituting \eqref{eq:subgradient sublinear recursion 4} into \eqref{eq:subgradient sublinear recursion 3}  and recognizing that $\Theta(\mX_k) = \lambda^{-1}  \left\|   P_{\lambda f} (\mX_{k})  - \mX_{k} \right\|_F$ (see \eqref{eq:surrogate optimality}).
\end{proof}

Using \Cref{prop:rigd global rate}, we obtain our iteration complexity result for the Riemannian subgradient and incremental subgradient methods.

\begin{thm}\label{thm:rigd global rate}
Under the setting of \Cref{prop:rigd global rate}, the following hold:
	\begin{enumerate}[(a)]
		\item If we choose the constant stepsize $\gamma_k = \tfrac{1}{m\sqrt{T+1}}$, $k=0,1,\ldots$ with $T$ being the total number of  iterations, then
		\[
		\begin{aligned}
		\min_{0\leq k\leq T} \Theta^2(\mX_k)   \leq \frac{ 2\big( f_{\lambda}\left(\mX_{0}\right)  - \min f_{\lambda} \big)  +   \frac{L^2}{\lambda}   + \frac{L^2 (L+\tau)}{\lambda m^3 } C(m) }{2\lambda \left(\frac{1}{2\lambda} - (L+\tau)\right) \sqrt{T+1}}. 
		\end{aligned}
		\]
		\item If we choose the diminishing stepsizes $\gamma_k = \tfrac{1}{m\sqrt{k+1}}$, $k=0,1,\ldots$, then 
		\[
		\begin{aligned}
		\min_{0\leq k\leq T} \Theta^2(\mX_k)    \leq    \frac{2 \big(f_{\lambda}\left(\mX_{0}\right)  - \min f_{\lambda}  \big)  +  \left( \frac{L^2}{\lambda}    + \frac{L^2 (L+\tau)}{\lambda m^3 } C(m)  \right) \big(\ln(T+1) +1 \big) }{2 \lambda \left(\frac{1}{2\lambda} - (L+\tau)\right) \sqrt{T+1}}.
		\end{aligned}
		\]
	\end{enumerate}
\end{thm}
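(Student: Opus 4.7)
The plan is to derive both bounds directly from the one-step recursion in \Cref{prop:rigd global rate} by summing over $k=0,\ldots,T$ and telescoping. Rearranging the inequality of the proposition gives, for every $k\ge 0$,
\[
m\gamma_k\,\Theta^2(\mX_k) \;\le\; \frac{1}{2\lambda\bigl(\tfrac{1}{2\lambda}-(L+\tau)\bigr)}\left[\,2\bigl(f_\lambda(\mX_k)-f_\lambda(\mX_{k+1})\bigr) + \tfrac{L^2}{\lambda}m^2\gamma_k^2 + \tfrac{L^2(L+\tau)}{\lambda}C(m)\gamma_k^3\,\right].
\]
Summing from $k=0$ to $T$, the function-value differences telescope to $f_\lambda(\mX_0)-f_\lambda(\mX_{T+1})$, which is bounded above by $f_\lambda(\mX_0)-\min f_\lambda$ (and the latter is finite because $f$ is continuous on the compact set $\stiefel(n,r)$, so $f_\lambda$ is bounded below). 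Lower-bounding the left-hand side by $\bigl(\sum_{k=0}^T m\gamma_k\bigr)\min_{0\le k\le T}\Theta^2(\mX_k)$ yields
\[
\Bigl(\sum_{k=0}^T m\gamma_k\Bigr)\min_{0\le k\le T}\Theta^2(\mX_k) \;\le\; \frac{2\bigl(f_\lambda(\mX_0)-\min f_\lambda\bigr) + \tfrac{L^2 m^2}{\lambda}\sum_{k=0}^T\gamma_k^2 + \tfrac{L^2(L+\tau)C(m)}{\lambda}\sum_{k=0}^T\gamma_k^3}{2\lambda\bigl(\tfrac{1}{2\lambda}-(L+\tau)\bigr)}.
\]

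For part (a) with $\gamma_k=\tfrac{1}{m\sqrt{T+1}}$, the stepsize-dependent sums become elementary: $\sum_{k=0}^T m\gamma_k=\sqrt{T+1}$, $\tfrac{L^2 m^2}{\lambda}\sum_{k=0}^T\gamma_k^2=\tfrac{L^2}{\lambda}$, and $\tfrac{L^2(L+\tau)C(m)}{\lambda}\sum_{k=0}^T\gamma_k^3=\tfrac{L^2(L+\tau)C(m)}{\lambda m^3\sqrt{T+1}}\le \tfrac{L^2(L+\tau)C(m)}{\lambda m^3}$. Dividing through by $\sqrt{T+1}$ gives the stated bound.

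For part (b) with $\gamma_k=\tfrac{1}{m\sqrt{k+1}}$, I would use the monotonicity lower bound $\sum_{k=0}^T \tfrac{1}{\sqrt{k+1}}\ge (T+1)\cdot\tfrac{1}{\sqrt{T+1}}=\sqrt{T+1}$, and standard integral estimates $\sum_{k=0}^T\tfrac{1}{k+1}\le 1+\ln(T+1)$. For the cubic sum I would use the crude bound $\tfrac{1}{(k+1)^{3/2}}\le \tfrac{1}{k+1}$ so that $\sum_{k=0}^T\tfrac{1}{(k+1)^{3/2}}\le 1+\ln(T+1)$ as well; this is lossy but exactly matches the form $(\ln(T+1)+1)$ in the theorem. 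Dividing through by $\sqrt{T+1}$ concludes the proof.

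I do not anticipate any real obstacle: the heavy lifting (the Riemannian subgradient inequality, nonexpansiveness of the polar retraction, the inductive bounds on $\|\mX_{k,i-1}-\mX_k\|_F$ and $\|\mX_{k,i}-P_{\lambda f}(\mX_k)\|_F^2$) is already packaged in \Cref{prop:rigd global rate}. The only care needed is ensuring the constant $\lambda<\tfrac{1}{2(L+\tau)}$ so the denominator $\tfrac{1}{2\lambda}-(L+\tau)$ is positive, and observing that the telescoped $f_\lambda(\mX_0)-f_\lambda(\mX_{T+1})$ can safely be replaced by $f_\lambda(\mX_0)-\min f_\lambda$.
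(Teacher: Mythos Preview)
Your proposal is correct and follows essentially the same approach as the paper: sum the one-step inequality from \Cref{prop:rigd global rate} over $k=0,\ldots,T$, telescope, lower-bound the left side by $(\sum_k m\gamma_k)\min_k\Theta^2(\mX_k)$, and then substitute the two stepsize choices using the same elementary bounds on $\sum \tfrac{1}{\sqrt{k+1}}$ and $\sum \tfrac{1}{k+1}$. Your explicit handling of the cubic sum via $\tfrac{1}{(k+1)^{3/2}}\le\tfrac{1}{k+1}$ is exactly what the paper leaves implicit.
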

\begin{proof}
By summing both sides of the relation in \Cref{prop:rigd global rate} over $k=0,1,\ldots,T$, we deduce that
	\[
		\min_{0\leq k\leq T} \Theta^2(\mX_k)    \leq   \frac{ 2\big( f_{\lambda}\left(\mX_{0}\right)  - \min f_{\lambda} \big)  +    \frac{L^2}{\lambda} m^2\sum_{k=0}^{T} \gamma_k^2  + \frac{L^2(L+\tau)} {\lambda} C(m)  \sum_{k=0}^{T} \gamma_k^3 }{2\lambda \left(\frac{1}{2\lambda} - (L+\tau)\right) m \sum_{k=0}^{T} \gamma_k}.
	\]
	The result in (a) follows immediately by substituting $\gamma_k = \tfrac{1}{m\sqrt{T+1}}$ into the above inequality, while that in (b) follows by substituting $\gamma_k = \frac{1}{m\sqrt{k+1}}$ into the above inequality and noting that $\sum_{k=0}^{T} \tfrac{1}{\sqrt{k+1}} > \sqrt{T+1}$ and $\sum_{k=0}^{T} {\tfrac{1}{k+1}} < \ln(T+1) + 1$.
\end{proof}

By taking $\lambda = \tfrac{1}{4 (L+\tau)}$ and using the constant stepsize $\gamma_k = \tfrac{1}{m \sqrt{T+1}}$, $k=0,1,\ldots$, we see from \Cref{thm:rigd global rate} that 
\[
\min_{0\leq k\leq T} \Theta(\mX_k)    \leq  \frac{ 2 \sqrt{ \big(f_{\lambda}\left(\mX_{0}\right)  - \min f_{\lambda}  \big)  +  2 L^2 (L+\tau)(1+L+\tau)  }  }{(T+1)^{1/4}}.
\]
In particular, the iteration complexity of the Riemannian (incremental) subgradient method for computing an $\varepsilon$-nearly stationary point of problem~\eqref{eq:stiefel opt problem} is $\calO(\varepsilon^{-4})$. It is worth noting that this matches the iteration complexity of a host of methods for solving weakly convex minimization problems in the Euclidean space~\cite{davis2019stochasticmodel}.


\subsection{Riemannian stochastic subgradient method} \label{subsec:rssm}

Now, let us turn to analyze the Riemannian stochastic subgradient method. Instead of focusing on objective functions with a finite-sum structure as in \eqref{eq:stiefel opt problem}, we consider the following more general stochastic optimization problem over the Stiefel manifold:
\e\label{eq:stiefel stochastic opt problem}
\begin{split}
	&\minimize_{\mX \in \R^{n\times r}}  \ f(\mX) := \E_{\zeta \sim D } [g(\mX,\zeta)] \\
	&\st \mX \in \stiefel(n,r).
\end{split}
\ee
Here, we assume that the function $\mX\mapsto g(\mX,\zeta)$ is $\tau$-weakly convex ($\tau\ge0$) for each realization $\zeta$ and the function $f$ is finite-valued on $\R^{n \times r}$. Furthermore, we assume the existence of a bounded open convex set $\calU$ containing $\stiefel(n,r)$ such that $\mX\mapsto g(\mX,\zeta)$ is Lipschitz continuous on $\calU$ with some constant $L(\zeta)>0$ and $L^2 = \E_{\zeta \sim D }[ L(\zeta)^2 ] < +\infty$. This would then imply that for any $\mX \in \stiefel(n,r)$, we have
\e \label{eq:g-sub-bd}
\E_{\zeta \sim D }\left[ \left\|\widetilde \nabla g(\mX, \zeta) \right\|_F^2 \right] \le L^2,
\ee
where $\widetilde \nabla g(\mX, \zeta) \in \partial g(\mX,\zeta)$. Moreover, the function $f$ is $L$-Lipschitz continuous on $\calU$. 
When $D$ is the empirical distribution on $m$ data samples, problem \eqref{eq:stiefel stochastic opt problem} reduces to our original finite-sum optimization problem \eqref{eq:stiefel opt problem}. If all the component functions are finite-valued and weakly convex, then the above two assumptions hold.

Now, suppose that the Riemannian stochastic subgradient method is equipped with a \emph{Riemannian stochastic subgradient oracle}, which has the following properties:
\begin{enumerate}[(a)]
	\item The oracle can generate i.i.d. samples according to the distribution $D$.
	\item Given a point $\mX\in\stiefel(n,r)$, the oracle generates a sample $\zeta\sim D$ and returns a stochastic subgradient $\widetilde \nabla g(\mX, \zeta) \in \partial g(\mX,\zeta)$ with $\E_{\zeta \sim D } [\widetilde \nabla g(\mX, \zeta)] \in \partial f(\mX)$, from which one can obtain a Riemannian stochastic subgradient $\widetilde \nabla_{\calR} g(\mX, \zeta) \in \partial_{\calR} g(\mX,\zeta)$ with $\E_{\zeta \sim D } [\widetilde \nabla_{\calR} g(\mX, \zeta)] \in \partial_{\calR} f(\mX)$.

\end{enumerate}
We remark that the above properties mirror those of the stochastic subgradient oracle for stochastic optimization in the Euclidean space; see, e.g., Assumptions (A1) and (A2) in \cite{nemirovski2009robust}. 

At the current iterate $\mX_k$, the Riemannian stochastic subgradient oracle generates a sample $\zeta_k \sim D$ that is independent of $\{\zeta_0,\ldots, \zeta_{k-1}\}$ and returns a Riemannian stochastic subgradient $\widetilde \nabla_{\calR} g(\mX_k, \zeta_k)$. Then, the Riemannian stochastic subgradient method generates the next iterate $\mX_{k+1}$ via
\e\label{eq:sgd update}
\mX_{k+1} = \Retr_{\mX_{k}} (\vxi_{k}) \quad\text{with}\quad \vxi_{k} = - \gamma_k \widetilde \nabla_{\calR} g(\mX_{k}, \zeta_k).
\ee
This generalizes the update~\eqref{eq:rigd update} introduced in \Cref{subsec:methods} for the case where $D$ is the empirical distribution on $m$ data samples.


Similar to the analysis of the Riemannian subgradient and incremental subgradient methods, we begin by establishing the following result; cf.~\Cref{prop:rigd global rate}:
\begin{prop}\label{prop:stochastic subgradient global rate}
Suppose that the aforementioned assumptions on problem~\eqref{eq:stiefel stochastic opt problem} hold, and that a Riemannian stochastic subgradient oracle having properties (a)--(b) above is available. Let $\{\mX_k\}$ be the sequence generated by the Riemannian stochastic subgradient method \eqref{eq:sgd update} with arbitrary initialization for solving problem \eqref{eq:stiefel stochastic opt problem}. Then, for any $\lambda < \frac{1}{L+\tau}$ in \eqref{eq:Moreau envelope}, we have 
	\[
	\gamma_k \E\left[ \Theta^2(\mX_k) \right]  \leq  \frac{ 2 \big( \E\left[ f_{\lambda} (\mX_{k}) \right] - \E \left[ f_{\lambda} (\mX_{k+1}) \right] \big) + \frac{\gamma_k^2 L^2}{\lambda} }  { \lambda  \left( \frac{1}{\lambda} - (L+\tau)\right)}, \ \ \forall \ k\geq 0.
	\]
\end{prop}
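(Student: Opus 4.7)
The plan is to follow the structure of the proof of \Cref{prop:rigd global rate} closely, but simplified to a single subgradient step per iteration, while carefully managing the randomness through conditional expectations. The key ingredients will be the definition of the Moreau envelope in \eqref{eq:Moreau envelope}, the nonexpansiveness of the polar retraction in \Cref{lem:nonexpansive polar retra}, the Riemannian subgradient inequality in \Cref{thm:Riemannian subgradient inequality}, and property (b) of the stochastic oracle.

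First, I would use the feasibility of $P_{\lambda f}(\mX_k) \in \stiefel(n,r)$ in the minimization defining $f_\lambda(\mX_{k+1})$ to obtain
\begin{equation*}
f_\lambda(\mX_{k+1}) \leq f\!\left(P_{\lambda f}(\mX_k)\right) + \frac{1}{2\lambda} \left\|P_{\lambda f}(\mX_k) - \mX_{k+1}\right\|_F^2.
\end{equation*}
Then, by \Cref{lem:nonexpansive polar retra}, $\left\|P_{\lambda f}(\mX_k) - \mX_{k+1}\right\|_F \leq \left\|P_{\lambda f}(\mX_k) - \mX_k - \vxi_k\right\|_F$. Expanding the square with $\vxi_k = -\gamma_k \widetilde{\nabla}_\calR g(\mX_k, \zeta_k)$ yields
\begin{equation*}
\left\|P_{\lambda f}(\mX_k) - \mX_{k+1}\right\|_F^2 \leq \left\|P_{\lambda f}(\mX_k) - \mX_k\right\|_F^2 + 2\gamma_k \left\langle \widetilde{\nabla}_\calR g(\mX_k, \zeta_k),\, P_{\lambda f}(\mX_k) - \mX_k \right\rangle + \gamma_k^2 \left\|\widetilde{\nabla}_\calR g(\mX_k, \zeta_k)\right\|_F^2.
\end{equation*}

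Next I would take conditional expectation with respect to $\zeta_k$ given $\mX_k$. By oracle property (b), $\E\!\left[\widetilde{\nabla}_\calR g(\mX_k, \zeta_k) \mid \mX_k\right] \in \partial_\calR f(\mX_k)$, so the cross term can be bounded above by $f\!\left(P_{\lambda f}(\mX_k)\right) - f(\mX_k) + \tfrac{\tau+L}{2}\left\|P_{\lambda f}(\mX_k) - \mX_k\right\|_F^2$ via \Cref{thm:Riemannian subgradient inequality}. The last term is controlled using \eqref{eq:g-sub-bd} together with the fact that the Riemannian subgradient, being a Euclidean projection onto the tangent subspace, has Frobenius norm at most that of the underlying Euclidean subgradient. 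Invoking the identity $f_\lambda(\mX_k) = f\!\left(P_{\lambda f}(\mX_k)\right) + \tfrac{1}{2\lambda}\left\|P_{\lambda f}(\mX_k) - \mX_k\right\|_F^2$ (valid since $\mX_k \in \stiefel(n,r)$) together with $f_\lambda(\mX_k) \leq f(\mX_k)$, I can rewrite $f\!\left(P_{\lambda f}(\mX_k)\right) - f(\mX_k)$ as a nonpositive multiple of $\left\|P_{\lambda f}(\mX_k) - \mX_k\right\|_F^2$ and absorb it into the coefficient.

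This produces the one-step sufficient decrease
\begin{equation*}
\E\!\left[f_\lambda(\mX_{k+1}) \mid \mX_k\right] \leq f_\lambda(\mX_k) - \frac{\gamma_k}{2}\left(\frac{1}{\lambda} - (L+\tau)\right)\left\|P_{\lambda f}(\mX_k) - \mX_k\right\|_F^2 + \frac{\gamma_k^2 L^2}{2\lambda}.
\end{equation*}
Substituting $\left\|P_{\lambda f}(\mX_k) - \mX_k\right\|_F^2 = \lambda^2 \Theta^2(\mX_k)$ from \eqref{eq:surrogate optimality}, taking total expectation, and rearranging (with $\lambda < 1/(L+\tau)$ ensuring positivity of the coefficient) yields the claimed bound. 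The main subtlety I anticipate is the bookkeeping around conditional versus full expectations, and in particular ensuring that the mean Riemannian stochastic subgradient lies in $\partial_\calR f(\mX_k)$ so that \Cref{thm:Riemannian subgradient inequality} applies to it; this is precisely what oracle property (b) provides. The cleaner bound relative to \Cref{prop:rigd global rate}—with the $m$-cubic remainder term absent—is a consequence of there being no inner loop.
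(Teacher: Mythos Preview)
Your proposal is correct and follows essentially the same route as the paper's proof: Moreau envelope optimality, \Cref{lem:nonexpansive polar retra}, expansion of the square, oracle property (b) to pass the expectation into $\partial_{\calR} f(\mX_k)$, then \Cref{thm:Riemannian subgradient inequality} together with $f_\lambda(\mX_k)\le f(\mX_k)$ and the identity for $f_\lambda(\mX_k)$, followed by total expectation and rearrangement. One small slip: in your displayed one-step decrease the coefficient in front of $\|P_{\lambda f}(\mX_k)-\mX_k\|_F^2$ should be $\tfrac{\gamma_k}{2\lambda}\bigl(\tfrac{1}{\lambda}-(L+\tau)\bigr)$ rather than $\tfrac{\gamma_k}{2}\bigl(\tfrac{1}{\lambda}-(L+\tau)\bigr)$, since the cross term carries a $1/\lambda$ from the $\tfrac{1}{2\lambda}\|\cdot\|_F^2$ in the envelope; with that correction your rearrangement yields exactly the stated bound.
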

\begin{proof}
Using~\eqref{eq:Moreau envelope}, the optimality of $P_{\lambda f} (\mX_{k+1})$,  \Cref{lem:nonexpansive polar retra}, and the fact that $P_{\lambda f} (\mX_{k}) \in \stiefel(n,r)$, we obtain
	\begin{align*}
		&\E_{\zeta_k\sim D} \left[ f_{\lambda} (\mX_{k+1}) \right] 
		\leq  f\left(P_{\lambda f} (\mX_{k})\right) + \frac{1}{2\lambda}  \E_{\zeta_k \sim D}\left[ \left\| P_{\lambda f} (\mX_{k}) - \mX_{k+1} \right\|_F^2 \right] \\
		\leq& \ f\left(P_{\lambda f} (\mX_{k})\right) + \frac{1}{2\lambda}  \E_{\zeta_k \sim D}\left[ \left\|  \mX_{k}  - \gamma_k \widetilde \nabla_{\calR} g(\mX_k, \zeta_k) -  P_{\lambda f} (\mX_{k}) \right\|_F^2 \right] \\
		\leq& \ f_{\lambda} (\mX_{k}) + \frac{\gamma_k}{\lambda} \E_{\zeta_k \sim D} \left[ \left\langle \widetilde \nabla_{\calR} g(\mX_k, \zeta_k),   P_{\lambda f} (\mX_{k}) - \mX_{k} \right\rangle \right] + \frac{\gamma_k^2	L^2}{2\lambda},
	\end{align*}
	where the first inequality is due to the optimality of $P_{\lambda f} (\mX_{k+1})$, the second inequality comes from \Cref{lem:nonexpansive polar retra} and the fact that $P_{\lambda f} (\mX_{k}) \in \stiefel(n,r)$, and the third inequality is due to~\eqref{eq:g-sub-bd} and the fact that $\left\| \widetilde \nabla_{\calR} g(\mX, \zeta) \right\|_F \le \left\|\widetilde \nabla g(\mX, \zeta) \right\|_F$. Since we have $\E_{\zeta_k \sim D} \left[ \widetilde \nabla_{\calR} g(\mX_k, \zeta_k) \right] \in \partial_{\calR} f(\mX_k)$, the $L$-Lipschitz continuity of $f$ on $\calU$, \Cref{thm:Riemannian subgradient inequality}, and \eqref{eq:subgradient sublinear recursion 4} imply that
	\begin{align*}
		 &\E_{\zeta_k\sim D} \left[ f_{\lambda} (\mX_{k+1}) \right] 
		\leq  f_{\lambda} (\mX_{k}) - \frac{\gamma_k}{2\lambda}  \left( \frac{1}{\lambda} - (L+\tau) \right)   \left\|   \mX_k - P_{\lambda f} (\mX_{k}) \right\|_F^2 + \frac{\gamma_k^2	L^2}{2\lambda}.
	\end{align*}
	 Upon taking expectation with respect to all the previous realizations $\zeta_0, \ldots, \zeta_{k-1}$ on both sides, we get
	\begin{align*}
		\E \left[ f_{\lambda} (\mX_{k+1}) \right] &\leq  \E\left[ f_{\lambda} (\mX_{k}) \right] - \frac{\gamma_k}{2\lambda}  \left( \frac{1}{\lambda} - (L+\tau)  \right)   \E\left[ \left\| P_{\lambda f} (\mX_{k})  -   \mX_k \right\|_F^2 \right] + \frac{\gamma_k^2	L^2}{2\lambda}.
	\end{align*}
 The desired result then follows by rearranging the above inequality and recognizing that $\Theta(\mX_k) = \lambda^{-1}  \left\|   P_{\lambda f} (\mX_{k})  - \mX_{k} \right\|_F$ (see \eqref{eq:surrogate optimality}).  
\end{proof}

Now, we can bound the iteration complexity of the Riemannian stochastic subgradient method using \Cref{prop:stochastic subgradient global rate}.

\begin{thm}\label{thm:stochastic subgradient global rate}
Under the setting of \Cref{prop:stochastic subgradient global rate}, suppose that we choose the constant stepsize $\gamma_k = \tfrac{1}{\sqrt{T+1}}$, $k=0,1,\ldots$ with $T$ being the total number of iterations and the algorithm returns $\mX_{\overline k}$ with $\overline k$ sampled from $\{1,\ldots,T\}$ uniformly at random. Then,  we have 
	\[
	\E\left[ \Theta^2\left(\mX_{ \overline k}\right) \right] \leq \frac{1} { \lambda \left( \frac{1}{\lambda} - (L+\tau) \right) }  \frac{   2 \big( f_{\lambda} (\mX_{0})   - \min f_{\lambda} \big) + \frac{L^2}{\lambda} }   {\sqrt{T+1}},
	\]
	where the expectation is taken over all random choices by the algorithm.
\end{thm}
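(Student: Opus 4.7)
The plan is to reduce the theorem to a routine telescoping argument applied to the per-iteration recursion already established in \Cref{prop:stochastic subgradient global rate}, and then fold in the uniform random sampling of the returned index $\overline{k}$.

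First, I would sum the one-step bound
\[
\gamma_k \E\left[ \Theta^2(\mX_k) \right]  \leq  \frac{ 2 \big( \E[ f_{\lambda} (\mX_{k}) ] - \E [ f_{\lambda} (\mX_{k+1}) ] \big) + \frac{\gamma_k^2 L^2}{\lambda} }  { \lambda  \left( \frac{1}{\lambda} - (L+\tau)\right)}
\]
from $k=0$ to $k=T$. The function-value terms telescope into $f_{\lambda}(\mX_0) - \E[ f_{\lambda}(\mX_{T+1}) ]$ (using that $\mX_0$ is deterministic), which is at most $f_{\lambda}(\mX_0) - \min f_{\lambda}$ since $f_\lambda$ is bounded below by $\min f_\lambda$ (which in turn follows because $f_\lambda \le f$ on $\stiefel(n,r)$ and $f$ is continuous on the compact set $\stiefel(n,r)$, ensuring $\min f_\lambda$ is finite). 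Substituting the constant choice $\gamma_k = 1/\sqrt{T+1}$ yields $\sum_{k=0}^{T} \gamma_k = \sqrt{T+1}$ and $\sum_{k=0}^{T} \gamma_k^2 = 1$, so the accumulated inequality simplifies to
\[
\frac{1}{\sqrt{T+1}} \sum_{k=0}^{T} \E[ \Theta^2(\mX_k) ] \;\le\; \frac{ 2 \big( f_{\lambda}(\mX_0) - \min f_{\lambda} \big) + \frac{L^2}{\lambda} }{ \lambda  \left( \frac{1}{\lambda} - (L+\tau)\right)}.
\]

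Finally, since $\overline{k}$ is drawn uniformly from $\{0,1,\dots,T\}$ independently of the algorithm's randomness, the tower property gives
\[
\E[\Theta^2(\mX_{\overline{k}})] = \frac{1}{T+1}\sum_{k=0}^{T} \E[\Theta^2(\mX_k)].
\]
Combining this identity with the previous display and dividing through by $T+1$ (noting that $\sqrt{T+1}/(T+1) = 1/\sqrt{T+1}$) produces exactly the claimed bound.

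I do not expect any genuine obstacle: \Cref{prop:stochastic subgradient global rate} already isolates all the technical work (the Riemannian subgradient inequality, nonexpansiveness of the polar retraction, and the bounded-second-moment property of the stochastic subgradient). The only minor points to watch are (i) interpreting the total expectation as being over both the oracle samples $\{\zeta_0,\dots,\zeta_T\}$ and the independent uniform draw of $\overline{k}$, and (ii) confirming that $\min f_\lambda$ is indeed a valid lower bound for $\E[f_\lambda(\mX_{T+1})]$, which holds pointwise on $\stiefel(n,r)$.
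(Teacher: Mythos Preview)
Your proposal is correct and follows essentially the same route as the paper: sum the per-iteration inequality from \Cref{prop:stochastic subgradient global rate} over $k=0,\dots,T$, telescope the $f_\lambda$ terms and bound the last one by $\min f_\lambda$, substitute the constant stepsize, and identify the resulting uniform average with $\E[\Theta^2(\mX_{\overline k})]$. You also correctly use $\overline{k}\in\{0,1,\dots,T\}$, which is what both your argument and the paper's proof actually require (the range $\{1,\dots,T\}$ in the statement appears to be a typo).
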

\begin{proof}
By summing both sides of the relation in \Cref{prop:stochastic subgradient global rate} over $k=0,1,\ldots,T$, we have
	\[ 
	\sum_{k=0}^T \gamma_k   \E\left[\Theta^2(\mX_k) \right]      \leq  \frac{ 2\big( f_{\lambda} (\mX_{0})  - \min f_{\lambda} \big)  + \frac{L^2}{\lambda} \sum_{k=0}^T \gamma_k^2 }{\lambda  \left( \frac{1}{\lambda} - (L+\tau) \right) }.
	\] 
	It follows that
	\[ 
		\sum_{k=0}^T \frac{\gamma_k}{\sum_{k=0}^T \gamma_k}    \E\left[\Theta^2(\mX_k) \right]   \leq   \frac{1} {\lambda \left( \frac{1}{\lambda} - (L+\tau) \right) }  \frac{ 2\big(  f_{\lambda} (\mX_{0})  - \min f_{\lambda} \big) + \frac{	L^2}{\lambda} \sum_{k=0}^T \gamma_k^2 }   {\sum_{k=0}^T \gamma_k}.
	\] 
	To complete the proof, it remains to substitute $\gamma_k = \tfrac{1}{\sqrt{T+1}}$ into the above inequality and note that the resulting LHS is exactly $ \E\left[ \Theta^2\left(\mX_{ \overline k}\right)  \right]$ with the expectation being taken with respect to $\zeta_0,\ldots,\zeta_{T-1},\overline k$. 
\end{proof}

\section{Local Linear Convergence for Sharp Instances}\label{sec:linear convergence}

So far our discussion on problem~\eqref{eq:stiefel opt problem} does not assume any structure on the objective function $f$ besides weak convexity. However, many applications, such as those discussed in \Cref{subsec:motivation}, give rise to weakly convex objective functions that are not arbitrary but have rather concrete structure. It is thus natural to ask whether the methods we considered can exploit  this structure and provably achieve faster convergence rates than those established in \Cref{sec:global convergence}. In this section, we introduce a regularity property of problem~\eqref{eq:stiefel opt problem} called \emph{sharpness} and show that the Riemannian subgradient and incremental subgradient methods will achieve a local linear convergence rate when applied to instances of~\eqref{eq:stiefel opt problem} that possess the sharpness property. Then, we will discuss in \Cref{sec:application} how the notion of sharpness captures, in a unified manner, the structure of both the  dual principal component pursuit (DPCP) formulation \eqref{eq:dpcp stiefel} of the robust subspace recovery (RSR) problem and the single-column formulation~\eqref{eq:ODL sphere} of the orthogonal dictionary learning (DL) problem.


\subsection{Sharpness: Weak sharp minima}\label{subsec:sharp}

To begin, let us introduce the notion of a weak sharp minima set.

\begin{defi}[Sharpness; cf.~\cite{burke1993weak,li2011weak,karkhaneei2019nonconvex}]\label{def:sharpness}
	We say that $\calX\subseteq\stiefel(n,r)$ is a set of \emph{weak sharp minima} for the function $h:\R^{n\times r}\rightarrow\R$ with parameter $\alpha>0$ if there exists a constant $\rho>0$ such that for any $\mX\in \calB:=\{ \mX \in \R^{n\times r}: \dist(\mX,\calX) < \rho \}\cap\stiefel(n,r)$, we have
	\[
	h(\mX) - h(\mY) \geq \alpha \dist(\mX, \calX)
	\]
	for all $\mY\in \calX$, where  $\dist(\mX,\calX) := \inf_{\mY\in \calX} \|\mY - \mX\|_F$.
\end{defi}

From the definition, it is immediate that if $\calX$ is a set of weak sharp minima for $h$, then it is the set of minimizers of $h$ over $\calB$, and the function value grows linearly with the distance to $\calX$. Moreover, if $h$ is continuous (e.g., when $h$ is weakly convex), then $\calX$  can be taken as closed.

Similar notions of sharpness play a fundamental role in establishing the linear convergence of a host of methods for weakly convex minimization in the Euclidean space. For instance, it is shown in~\cite{goffin1977convergence} that the subgradient method with geometrically diminishing stepsizes will converge linearly to the optimal solution set when applied to minimize a sharp convex function. Later, the work~\cite{davis2018subgradient} establishes a similar linear convergence result for sharp weakly convex minimization. In the recent work~\cite{li2019incremental}, it is shown that the incremental subgradient, proximal point, and prox-linear methods will converge linearly when applied to minimize a sharp weakly convex function. In this paper, we extend, for the first time, the above results to the manifold setting by establishing the linear convergence of Riemannian subgradient-type methods for minimizing a weakly convex function over the Stiefel manifold under the sharpness property in \Cref{def:sharpness}.

\subsection{Riemannian subgradient and incremental subgradient methods}

Again, we will focus on analyzing the Riemannian incremental subgradient method. The analysis of the Riemannian subgradient method will follow as a special case. We first present the following result, which is crucial for our subsequent development.
\begin{prop}\label{prop:stiefel linear rate recursion}
Under the setting of \Cref{prop:rigd global rate}, for any $\overline \mX\in \stiefel(n,r)$, we have
	\[
	\begin{aligned}
	\left\| \mX_{k+1}  - \overline \mX \right\|_F^2 &\leq (1+2m\gamma_k (L+\tau))\left\|  \mX_{k} - \overline \mX \right\|_F^2 - 2m\gamma_k \left( f(\mX_{k}) - f(\overline \mX)  \right) \\
	&\quad + m^2 \gamma_k^2L^2 + C(m) \gamma_k^3 L^2(L+\tau), \ \ \forall \ k\geq 0,
	\end{aligned}
	\]
	where $C(m) = \tfrac{1}{3}m(m-1)(2m-1)$. 
\end{prop}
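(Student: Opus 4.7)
The plan is to derive a one-step recursion on the \emph{inner} iterates $\mX_{k,i}$ and then sum over $i=1,\dots,m$, converting every occurrence of an inner iterate into the outer iterate $\mX_k$ by controlling $\|\mX_{k,i-1}-\mX_k\|_F$.

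First, I would apply \Cref{lem:nonexpansive polar retra} to the inner update $\mX_{k,i}=\Retr_{\mX_{k,i-1}}(\vxi_{k,i-1})$ with $\overline{\mX}\in\stiefel(n,r)$, so that
\[
\|\mX_{k,i}-\overline\mX\|_F^2 \;\le\; \|\mX_{k,i-1}+\vxi_{k,i-1}-\overline\mX\|_F^2.
\]
Expanding the square, substituting $\vxi_{k,i-1}=-\gamma_k\widetilde\nabla_{\calR}f_i(\mX_{k,i-1})$, bounding $\|\widetilde\nabla_{\calR} f_i(\mX_{k,i-1})\|_F\le L$, and invoking the Riemannian subgradient inequality (\Cref{thm:Riemannian subgradient inequality}) applied to $f_i$ at $\mX_{k,i-1}$ and $\overline\mX$ yields the single-step recursion
\[
\|\mX_{k,i}-\overline\mX\|_F^2 \le \|\mX_{k,i-1}-\overline\mX\|_F^2 + \gamma_k(L+\tau)\|\mX_{k,i-1}-\overline\mX\|_F^2 - 2\gamma_k\bigl(f_i(\mX_{k,i-1})-f_i(\overline\mX)\bigr) + \gamma_k^2 L^2.
\]

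Next, I would telescope this inequality over $i=1,\ldots,m$, leaving $\|\mX_{k+1}-\overline\mX\|_F^2-\|\mX_k-\overline\mX\|_F^2$ on the left. The two remaining per-$i$ quantities on the right are handled by reusing the inner-iterate bound $\|\mX_{k,i-1}-\mX_k\|_F\le(i-1)\gamma_k L$ established in the proof of \Cref{prop:rigd global rate}. Specifically, I would use the elementary inequality $\|\mX_{k,i-1}-\overline\mX\|_F^2\le 2\|\mX_{k,i-1}-\mX_k\|_F^2+2\|\mX_k-\overline\mX\|_F^2$ to obtain
\[
\sum_{i=1}^m \|\mX_{k,i-1}-\overline\mX\|_F^2 \le 2m\|\mX_k-\overline\mX\|_F^2 + 2\gamma_k^2 L^2\sum_{i=1}^m (i-1)^2,
\]
and combine Lipschitz continuity of each $f_i$ on $\calU$ with the same inner-iterate bound to get
\[
-2\gamma_k\sum_{i=1}^m\bigl(f_i(\mX_{k,i-1})-f_i(\overline\mX)\bigr) \le -2m\gamma_k\bigl(f(\mX_k)-f(\overline\mX)\bigr) + 2\gamma_k^2 L^2\sum_{i=1}^m(i-1).
\]

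Finally, I would plug these two estimates back into the telescoped recursion, collect terms, and recognize $\sum_{i=1}^m(i-1)=\tfrac{m(m-1)}{2}$ and $2\sum_{i=1}^m(i-1)^2=\tfrac{m(m-1)(2m-1)}{3}=C(m)$. The residual $m^2\gamma_k^2L^2$ arises by combining $m\gamma_k^2 L^2$ (from telescoping $\gamma_k^2 L^2$) with $m(m-1)\gamma_k^2 L^2$ (from the Lipschitz correction). The main obstacle is purely bookkeeping: making sure the coefficient on $\|\mX_k-\overline\mX\|_F^2$ lands exactly at $1+2m\gamma_k(L+\tau)$ rather than something like $(1+\gamma_k(L+\tau))^m$. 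The trick that avoids the geometric blow-up is to telescope the \emph{additive} differences $\|\mX_{k,i}-\overline\mX\|_F^2-\|\mX_{k,i-1}-\overline\mX\|_F^2$ and only then bound each $\|\mX_{k,i-1}-\overline\mX\|_F^2$ uniformly against $\|\mX_k-\overline\mX\|_F^2$ plus a $\gamma_k^2$-order term, rather than iterating the recursion multiplicatively.
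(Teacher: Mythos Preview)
Your proposal is correct and follows essentially the same route as the paper: both derive the single-step inner recursion via \Cref{lem:nonexpansive polar retra} and \Cref{thm:Riemannian subgradient inequality}, then use the inner-iterate bound $\|\mX_{k,i-1}-\mX_k\|_F\le(i-1)\gamma_k L$ together with the $2\|a\|^2+2\|b\|^2$ splitting and Lipschitz continuity of each $f_i$ to replace $\mX_{k,i-1}$ by $\mX_k$, and finally telescope. The only cosmetic difference is ordering---the paper substitutes the per-$i$ bounds into the recursion \emph{before} summing over $i$, whereas you telescope first and bound the resulting sums---but the ingredients and constants are identical.
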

\begin{proof}
	According to \Cref{lem:nonexpansive polar retra}, for any $\overline \mX\in \stiefel(n,r)$, we have
	\e\label{eq:stiefel recursion 1}
	\begin{split}
		\left\| \mX_{k,i}  - \overline \mX \right\|_F^2 &\leq \left\|  \mX_{k,i-1} + \vxi_{k,i-1}  - \overline \mX \right\|_F^2 \\
		&\stackrel{(i)}{\leq} \left\|  \mX_{k,i-1} - \overline \mX \right\|_F^2 - 2 \gamma_k \left\langle \widetilde \nabla_{\calR} f_i(\mX_{k,i-1}),  \mX_{k,i-1} - \overline \mX    \right\rangle + \gamma_k^2 L^2 \\
		&\stackrel{(ii)}{\leq } \left\|  \mX_{k,i-1} - \overline \mX \right\|_F^2 - 2\gamma_k \left( f_i(\mX_{k,i-1}) - f_i(\overline \mX)  \right)\\
		&\quad + \gamma_k (L+\tau) \left\|  \mX_{k,i-1} - \overline \mX \right\|_F^2 + \gamma_k^2L^2,
	\end{split}
	\ee
	where $(i)$ follows from the fact that $\left\| \widetilde \nabla_{\calR} f_i(\mX_{k,i-1}) \right\|   \leq \left\| \widetilde \nabla f_i(\mX_{k,i-1})  \right\|_F  \leq L$ and $(ii)$ is from \Cref{thm:Riemannian subgradient inequality}.	Following the derivations of \eqref{eq:inner iterates distance sitefel}--\eqref{eq:iterates bound}, we get
	\begin{align*}
	f_i(\overline \mX) - f_i(\mX_{k,i-1}) &\leq (i-1) \gamma_k L^2 - (f_i(\mX_k) -  f_i(\overline \mX) ), \\
	\left\|  \mX_{k,i-1} - \overline \mX \right\|_F^2 &\leq 2 (i-1)^2 \gamma_k^2 L^2 + 2  \left\|\mX_k - \overline \mX \right\|_F^2.
	\end{align*}
	Substituting the above two upper bounds into \eqref{eq:stiefel recursion 1} gives
	\begin{align*}
		\left\| \mX_{k,i}  - \overline \mX \right\|_F^2 &\leq \left\|  \mX_{k,i-1} - \overline \mX \right\|_F^2 - 2\gamma_k \left( f_i(\mX_{k}) - f_i(\overline \mX)  \right) + 2\gamma_k (L+\tau) \left\|  \mX_{k} - \overline \mX \right\|_F^2 \\
		&\quad + (2i-1) \gamma_k^2L^2 + 2 (i-1)^2 \gamma_k^3 L^2(L+\tau).
	\end{align*}
	Upon summing both sides of the above inequality over $i=1,\ldots,m$, we obtain
	\begin{align*}
		\left\| \mX_{k+1}  - \overline \mX \right\|_F^2 &\leq (1+2m\gamma_k (L+\tau))\left\|  \mX_{k} - \overline \mX \right\|_F^2 - 2m\gamma_k \left( f(\mX_{k}) - f(\overline \mX)  \right)  \\
		&\quad + m^2 \gamma_k^2L^2 + \frac{1}{3}m(m-1)(2m-1)\gamma_k^3 L^2(L+\tau),
	\end{align*}
	which completes the proof.
\end{proof}

In order for Riemannian subgradient-type methods to achieve linear convergence when solving sharp instances of problem~\eqref{eq:stiefel opt problem}, we need to choose the stepsizes appropriately. Motivated by previous works~\cite{goffin1977convergence,Shor:1985:MMN:3585,nedic2001convergence,davis2018subgradient,li2019incremental} on sharp weakly convex minimization in the Euclidean space, let us consider using geometrically diminishing stepsizes of the form $\gamma_k = \beta^k \gamma_0$, $k=0,1,\ldots$. Then, by applying \Cref{prop:stiefel linear rate recursion}, we can establish the following local linear convergence result:

\begin{thm}\label{thm:stiefel linear rate}
Consider the setting of \Cref{prop:rigd global rate}. Suppose further that $\calX$ is a set of weak sharp minima for the objective function $f$ in~\eqref{eq:stiefel opt problem} with parameter $\alpha>0$ over the set $\calB$ defined in \Cref{def:sharpness}.	Let $\{\mX_k\}$ be the sequence generated by Riemannian incremental subgradient method \eqref{eq:rigd update} for solving problem \eqref{eq:stiefel opt problem}, in which the initial point $\mX_0$ satisfies $ \dist(\mX_0, \calX) < \min\left\{ \tfrac{\alpha}{L+\tau}, \rho \right\} $ (so that $\mX_0 \in \calB$) and the stepsizes satisfy $\gamma_k = \beta^k \gamma_0$, $k=0,1,\ldots$, where
\[ \gamma_0< \min \left\{ \frac{2me_0(\alpha - (L+\tau)e_0)}{d(m)L^2}, \frac{e_0}{2m(\alpha - (L+\tau) e_0)} \right\}, \]
\[ \beta \in [\beta_{\min},1) \quad \text{with} \quad \beta_{\min}:= \sqrt{  1+ 2m\left(L+\tau - \frac{\alpha}{e_0} \right) \gamma_0 + \frac{d(m)L^2}{e_0^2} \gamma_0^2 }, \]
\[ d(m) = \frac{5}{3} m^2 - m + \frac{1}{3}, \quad\text{and}\quad e_0 = \min\left\{  \max\left\{ \dist(\mX_0,\calX), \frac{\alpha}{2(L+\tau)} \right\}, \rho \right\}. \]
Then, we have
	\[
	\dist(\mX_k, \calX)\leq \beta^k \cdot e_0, \ \ \forall \ k\geq 0.
	\]
\end{thm}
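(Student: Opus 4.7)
The plan is to establish $\dist(\mX_k,\calX) \le \beta^k e_0$ by induction on $k$, with the inductive step driven by \Cref{prop:stiefel linear rate recursion} combined with the sharpness property.

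Before inducting, I would verify that the parameter ranges are non-empty. Writing $\beta_{\min}^2 = 1 + v(\gamma_0)$ with $v$ quadratic in $\gamma_0$ and linear coefficient $2m(L+\tau - \alpha/e_0)$, the hypothesis $\dist(\mX_0,\calX) < \alpha/(L+\tau)$ forces $e_0 < \alpha/(L+\tau)$ and hence makes the linear coefficient strictly negative. The first upper bound on $\gamma_0$ in the statement is precisely the positive root of $v$, so $v(\gamma_0) < 0$, i.e., $\beta_{\min}^2 < 1$; the inequality $\beta_{\min}^2 > 0$ follows by minimising $v$ over $\gamma_0 > 0$ and invoking $L \ge \alpha$, which is itself a consequence of pairing the Lipschitz continuity $f(\mX) - f(\bar{\mX}) \le L\dist(\mX,\calX)$ with sharpness.

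The base case $k = 0$ is immediate since $e_0 \ge \dist(\mX_0,\calX)$. For the inductive step, assume $\dist(\mX_k,\calX) \le \beta^k e_0 \le e_0 \le \rho$, which keeps $\mX_k$ in $\calB$. Pick $\bar{\mX} \in \calP_\calX(\mX_k)$; sharpness gives $f(\mX_k) - f(\bar{\mX}) \ge \alpha\dist(\mX_k,\calX)$, and $\dist(\mX_{k+1},\calX) \le \|\mX_{k+1} - \bar{\mX}\|_F$ by definition. Plugging these into \Cref{prop:stiefel linear rate recursion} yields the scalar recursion
\begin{equation*}
\dist^2(\mX_{k+1},\calX) \le (1 + 2m\gamma_k(L+\tau))\,d^2 - 2m\gamma_k\alpha\,d + m^2\gamma_k^2 L^2 + C(m)\gamma_k^3 L^2(L+\tau),
\end{equation*}
with $d := \dist(\mX_k,\calX) \in [0,\beta^k e_0]$ and $\gamma_k = \beta^k \gamma_0$. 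The right-hand side is a convex quadratic in $d$; following the approach of the Euclidean analyses in~\cite{davis2018subgradient,li2019incremental}, I would bound $\gamma_k \le \gamma_0$ in the $d^2$ coefficient and split it as $q_1(d) + q_2(d) = (d^2 - m\gamma_k\alpha d) + (2m\gamma_0(L+\tau)d^2 - m\gamma_k\alpha d)$. The combined upper bound on $\gamma_0$ implies both $\gamma_0 \le e_0/(m\alpha)$ and $\gamma_0 \le 1/(m(L+\tau))$, of which the first forces the maximum of $q_1$ on $[0,\beta^k e_0]$ to be at $d = \beta^k e_0$. When $e_0 \ge \alpha/(2(L+\tau))$ the same is true for $q_2$; in the edge case $e_0 = \rho < \alpha/(2(L+\tau))$ one instead uses $q_2 \le 0$ on $[0,\beta^k e_0]$, which gives an even better bound. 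Substituting $d = \beta^k e_0$, using $\beta^{3k} \le \beta^{2k}$, and absorbing $C(m)\gamma_0(L+\tau)$ into $d(m)L^2$ via the identity $d(m) - m^2 = C(m)/m$ (combined with $\gamma_0 \le 1/(m(L+\tau))$), the bound collapses to $\beta^{2k}e_0^2 \cdot \beta_{\min}^2 \le \beta^{2(k+1)}e_0^2$, completing the induction.

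The main obstacle is not conceptual but a careful bookkeeping of constants: one must verify that the single combined upper bound on $\gamma_0$ in the statement simultaneously enforces (i) the quadratic attains its maximum at $d = \beta^k e_0$ on the inductive interval, (ii) the cubic-in-$\gamma_k$ remainder fits into $d(m)L^2\gamma_0^2/e_0^2$, and (iii) $\beta_{\min}^2 \in (0,1)$. The identity $d(m) - m^2 = C(m)/m$ together with the inequalities $L \ge \alpha$ and $d(m) \ge m^2$ is the algebraic glue that makes this accounting go through.
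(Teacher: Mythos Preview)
Your overall plan is correct and matches the paper's: induction on $k$, \Cref{prop:stiefel linear rate recursion} combined with sharpness, maximising a convex quadratic in $d:=\dist(\mX_k,\calX)$ over $[0,\beta^k e_0]$, and the algebraic step $m^2 + C(m)\gamma_0(L+\tau) \le d(m)$ via $\gamma_0 < 1/(m(L+\tau))$. The well-definedness argument for $\beta_{\min}\in(0,1)$ is also the same.

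There is one genuine gap: your treatment of the edge case $e_0=\rho<\alpha/(2(L+\tau))$ does not close. If you drop $q_2\le 0$ and maximise $q_1$ alone at $d=\beta^k e_0$, the resulting upper bound (after the same absorption step) is
\[
\beta^{2k}e_0^2\left(1 - \frac{m\gamma_0\alpha}{e_0} + \frac{d(m)L^2}{e_0^2}\gamma_0^2\right),
\]
and this is $\le \beta_{\min}^2\beta^{2k}e_0^2$ if and only if $\alpha/e_0 \le 2(L+\tau)$, i.e.\ $e_0\ge \alpha/(2(L+\tau))$---precisely what fails in the edge case. So ``$q_2\le 0$ gives an even better bound'' is false here: dropping $q_2$ makes the final numerical bound \emph{larger} than $(q_1+q_2)(\beta^k e_0)$ (since $q_2(\beta^k e_0)<0$), and it no longer fits under $\beta_{\min}^2\beta^{2k}e_0^2$.

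The fix is to abandon the $q_1,q_2$ split and keep the right-hand side as a \emph{single} convex quadratic $(1+2m\gamma_0(L+\tau))d^2 - 2m\gamma_k\alpha\,d + \text{const}$, which is exactly what the paper does. The second hypothesis on $\gamma_0$ rearranges to $\tfrac{2m\gamma_0\alpha}{1+2m\gamma_0(L+\tau)}<e_0$, and since $\gamma_k=\beta^k\gamma_0$ the vertex condition $\tfrac{m\gamma_k\alpha}{1+2m\gamma_0(L+\tau)}<\beta^k e_0$ follows immediately, for \emph{every} value of $e_0\in(0,\alpha/(L+\tau))$. Substituting $d=\beta^k e_0$ then gives $\beta_{\min}^2\beta^{2k}e_0^2$ on the nose, with no case distinction. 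This is both simpler and the reason the statement includes the second upper bound on $\gamma_0$ in the first place.
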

\begin{proof}
	We first show that $\beta_{\min} \in (0,1)$ and $\gamma_0 > 0$ are well defined. Towards that end, note that $\beta_{\min} = \sqrt{  1+ v(\gamma_0) }$  with $v(\gamma) = 2m\left(L+\tau - \tfrac{\alpha}{e_0} \right) \gamma + \frac{d(m)L^2}{e_0^2} \gamma^2$ being quadratic in $\gamma$. By definition of $\gamma_0$, we immediately have $v(\gamma_0)<0$. Moreover, the function $\gamma\mapsto v(\gamma)$ attains its minimum at $\overline \gamma = \frac{me_0(\alpha -(L+\tau)e_0)}{d(m) L^2}$ with value $v(\overline \gamma) =  - \frac{m^2(\alpha - (L+\tau)e_0)^2}{d(m) L^2} > - \frac{\alpha^2}{L^2} \geq -1$, where the first inequality is due to $\tfrac{m^2}{d(m)}\le1$ for $m\ge1$ and $e_0<\tfrac{\alpha}{L+\tau}$, and the second inequality is implied by the sharpness assumption because $ \alpha \|\mX - \overline \mX\|_F \leq f(\mX) - f(\overline \mX) \leq L\|\mX - \overline \mX\|_F $ for any $\mX\in\calB$ and  $\overline \mX \in \calP_{\calX} (\mX)$. Hence, we have $v(\gamma_0) \in (-1,0)$, which implies that $\beta_{\min}\in(0,1)$.  On the other hand, since $e_0< \tfrac{\alpha}{L+\tau}$, the upper bound on the initial stepsize $\gamma_0$ is positive. It follows that $\gamma_0$ is well defined.
		
	We now prove the theorem by induction on $k$. The base case $k = 0$ follows directly from the definition of $e_0$. For the inductive step, suppose that $\dist(\mX_k, \calX)\leq \beta^k \cdot e_0$ for some $k\ge0$. Note that this implies $\mX_k \in \calB$. Let $\overline\mX \in \calP_{\calX}(\mX_k)$. Clearly, we have $\dist( \mX_{k} , \calX) = \left\|  \mX_{k} - \overline \mX \right\|_F$ and $\dist( \mX_{k+1} , \calX) \leq \left\|  \mX_{k+1} - \overline \mX \right\|_F$. Hence, by \Cref{prop:stiefel linear rate recursion}, the sharpness assumption, and the fact that $\gamma_k \leq  \gamma_0$ for $k=0,1,\ldots$, we get
	\e\label{eq:key recursion}
	\begin{split}
		\dist^2( \mX_{k+1} , \calX) &\leq (1+2m\gamma_0 (L+\tau))\dist^2( \mX_{k} , \calX) - 2m  \gamma_k \alpha \dist( \mX_{k} , \calX)  \\
		&\quad + m^2 \gamma_k^2L^2 + C(m) \gamma_k^3 L^2(L+\tau).
	\end{split}
	\ee
	Observe that the RHS of the above recursion is quadratic in $\dist(\mX_{k},\calX)$. By definition of $\gamma_0$, we have $\gamma_0< \frac{e_0}{2m(\alpha - (L+\tau) e_0)}$ and hence $\tfrac{2m\gamma_0\alpha}{1+2m\gamma_0(L+\tau)} < e_0$. This implies that the RHS of \eqref{eq:key recursion} achieves its maximum when $\dist(\mX_{k},\calX) = \beta^k \cdot e_0$. Since $\dist(\mX_{k},\calX) \leq \beta^k \cdot e_0$ by the inductive hypothesis, plugging $\gamma_k = \beta^k \gamma_0$ and $\dist(\mX_{k},\calX) = \beta^k \cdot e_0$ into \eqref{eq:key recursion} yields
	\e \label{eq:recursion-9}
	\begin{split}
		&\dist^2(\mX_{k+1},\calX) \\
		\leq& \ \beta^{2k} e_0^2 \left[ 1+ 2m\left(L+\tau  - \frac{\alpha}{e_0} \right)\gamma_0  + L^2 \left( \frac{ m^2 + C(m) \gamma_0 (L+\tau) }{e_0^2} \right) \gamma_0^2  \right]. 
	\end{split}
	\ee
	Note that $\gamma_0< \frac{2m(\alpha e_0 - (L+\tau)e_0^2)}{d(m)L^2} \leq \frac{m\alpha^2}{2d(m)L^2(L+\tau)} < \frac{1}{m(L+\tau)}$. It then follows from \eqref{eq:recursion-9} that
	\[ 
		\dist^2(\mX_{k+1},\calX) \leq \beta^{2k} e_0^2 \left[ 1+ 2m\left(L+\tau  - \frac{\alpha}{e_0} \right)\gamma_0  +   \frac{ d(m) L^2 }{e_0^2} \gamma_0^2  \right]  \leq \beta^{2(k+1)} e_0^2.
	\] 
	This completes the inductive step and hence the proof of \Cref{thm:stiefel linear rate}. 
\end{proof}

From \Cref{thm:stiefel linear rate}, we see that in order to achieve a fast linear convergence rate, one should choose an appropriate $\gamma_0$ so that the minimum decay factor $\beta_{\min}$ is as small as possible.  By minimizing $\beta_{\min}$ with respect to $\gamma_0$, we see that the theoretical minimum value of $\beta_{\min}$ is $\sqrt{1- \tfrac{m^2(\alpha - (L+\tau)e_0)^2}{d(m) L^2}}$, which is attained at $\gamma_0 = \overline \gamma_0 = \tfrac{me_0(\alpha -(L+\tau)e_0)}{d(m) L^2}$. This suggests that subject to the requirement in \Cref{thm:stiefel linear rate}, the initial stepsize $\gamma_0$ should be set as close to $\overline \gamma_0$ as possible. As an illustration, consider the case where the sharpness property holds globally over the Stiefel manifold (i.e., $\calB = \stiefel(n,r)$ in \Cref{def:sharpness}). Then, the parameter $\rho$ can be set as large as possible. In this case, we have $e_0 =  \max\left\{ \dist(\mX_0,\calX), \tfrac{\alpha}{2(L+\tau)} \right\}$, and the condition on $\gamma_0$ in \Cref{thm:stiefel linear rate} becomes $\gamma_0<   \tfrac{2me_0(\alpha - (L+\tau)e_0)}{d(m)L^2}$. This implies that we can choose $\gamma_0 = \overline \gamma_0$ to obtain the smallest possible $\beta_{\min}$. Note, however, that the larger the initialization error $\dist(\mX_0,\calX)$, the larger  the minimum decay factor $\beta_{\min}$. In particular, from the expression for $\beta_{\min}$ above, we see that $\beta_{\min}$ approaches $1$ as $\dist(\mX_0,\calX)$ approaches its maximum $\frac{\alpha}{L+\tau}$.


We end this section by comparing the sharpness property with the  \emph{Riemannian regularity condition} used in \cite{bai2018subgradient} and \cite{zhu2019grasssub} for orthogonal DL and RSR, respectively. For a target solution set $\calX$, the Riemannian regularity condition stipulates the existence of a constant $\kappa>0$ such that $\left \langle    \widetilde \nabla_{\calR} f(\mX), \mX - \mY \right\rangle \geq \kappa  \dist(\mX,\calX)$ for all $\mX$ in a \emph{small neighborhood} of $\calX$ and $\mY \in \calP_{\calX}(\mX)$. This condition is motivated by the need to bound the inner product term on the LHS in the convergence analysis of the Riemannian subgradient method; see \eqref{eq:stiefel recursion 1} with $f_i = f$ and $\mX_{k,i-1} = \mX_k$. Informally, the Riemannian regularity condition is a combination of the Riemannian subgradient inequality in \Cref{thm:Riemannian subgradient inequality} and the sharpness property in \Cref{def:sharpness}. However, the tangling of these two elements potentially restricts the applicability of the Riemannian regularity condition. In particular, since the Riemannian regularity condition can only hold locally, it cannot be used to establish global convergence and iteration complexity results for the Riemannian subgradient method.

\section{Extension to Optimization over a Compact Embedded Submanifold} \label{sec:extension}

There is of course no conceptual difficulty in adapting the Riemannian subgradient-type methods in \Cref{subsec:methods} to minimize weakly convex functions over more general manifolds. All that is needed is an efficiently computable retraction on the manifold of interest. In this section, let us briefly demonstrate how the machinery developed in the previous sections can be extended to study the convergence behavior of Riemannian subgradient-type methods when the manifold in question is compact and defined by a certain smooth mapping.


\paragraph{Riemannian subgradient inequality} Our starting point is the following generalization of the Riemannian subgradient inequality in \Cref{thm:Riemannian subgradient inequality}, which applies to restrictions of weakly convex functions on a class of compact embedded submanifolds of the Euclidean space. Some examples of manifolds in this class include the generalized Stiefel manifold, oblique manifold, and symplectic manifold; see, e.g.,~\cite{absil2009optimization}.
\begin{cor}\label{cor:Riemannian subgradient inequality}
Let $\calM$ be a compact submanifold of $\R^p$ given by $\calM=\{\mX \in \R^p : F(\mX )= \mathbf{0}\}$, where $F: \mathbb{R}^p \rightarrow \mathbb{R}^q$ is a smooth mapping whose derivative $D F(\mX)$ at $\mX$ has full row rank for all $\mX \in \calM$. Then, for any weakly convex function $h:\R^p\rightarrow\R$, there exists a constant $c>0$ such that
	\[ 	h(\mY) \geq h(\mX) + \left\langle  \widetilde \nabla_{\calR} h(\mX)  , \mY-\mX   \right\rangle  - c \|\mY - \mX\|_F^2 \]
	for all $\mX,\mY \in \calM$ and $\widetilde \nabla_{\calR} h(\mX) \in \partial_{\calR} h(\mX)$.
\end{cor}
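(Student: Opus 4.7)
The plan is to adapt the proof of \Cref{thm:Riemannian subgradient inequality} line by line, identifying where it uses specific structure of $\stiefel(n,r)$ and supplying the analogs for a general compact embedded submanifold $\calM$. Three ingredients have to be generalized: (i) uniform Lipschitz continuity of $h$ on a neighborhood of the manifold; (ii) an explicit formula for the orthogonal projector onto the normal space at $\mX \in \calM$; and (iii) the Stiefel-specific identity $\|\mY^\top\mX + \mX^\top\mY - 2\mId_r\|_F \leq \|\mX-\mY\|_F^2$, which controls how far a manifold point $\mY$ can deviate from the tangent space at another manifold point $\mX$.

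Ingredient (i) is immediate: I enclose the compact set $\calM$ in a bounded open convex set $\calU\subseteq\R^p$; then \cite[Proposition 4.4]{V83} yields $L>0$ such that $h$ is $L$-Lipschitz on $\calU$, and in particular $\|\widetilde\nabla h(\mX)\|_F \leq L$ for every $\mX\in\calM$ and every Euclidean subgradient. For (ii), the normal space at $\mX\in\calM$ equals $\Range(DF(\mX)^\top)$; since $DF(\mX)$ has full row rank and $\mX\mapsto DF(\mX)$ is continuous, compactness of $\calM$ delivers uniform bounds $B:=\sup_{\mX\in\calM}\|DF(\mX)\|_2<\infty$ and $K:=\sup_{\mX\in\calM}\|(DF(\mX)DF(\mX)^\top)^{-1}\|_2<\infty$, together with the closed-form projector $\calP_{\T_\mX\calM}^\perp = DF(\mX)^\top (DF(\mX)DF(\mX)^\top)^{-1} DF(\mX)$. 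For (iii), the smoothness of $F$ and the boundedness of $\conv(\calM)$ yield some $M>0$ with $\|D^2 F\|\leq M$ on $\conv(\calM)$, so Taylor's theorem along the segment $[\mX,\mY]\subseteq\conv(\calM)$ combined with $F(\mX)=F(\mY)=\mathbf{0}$ gives
\[ \|DF(\mX)(\mY-\mX)\|_F = \|F(\mY)-F(\mX)-DF(\mX)(\mY-\mX)\|_F \leq \tfrac{M}{2}\|\mY-\mX\|_F^2, \]
the natural quantitative replacement for the Stiefel identity.

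With these ingredients in hand, I decompose $\widetilde\nabla h(\mX) = \widetilde\nabla_\calR h(\mX) + \calP_{\T_\mX\calM}^\perp(\widetilde\nabla h(\mX))$ and estimate, exactly as in \eqref{eq:bound weak convex term},
\begin{align*}
&\langle \calP_{\T_\mX\calM}^\perp(\widetilde\nabla h(\mX)),\, \mY-\mX\rangle \\
&\quad = \langle (DF(\mX)DF(\mX)^\top)^{-1} DF(\mX)\widetilde\nabla h(\mX),\; DF(\mX)(\mY-\mX)\rangle \\
&\quad \geq -\tfrac{LKBM}{2}\|\mY-\mX\|_F^2
\end{align*}
by Cauchy--Schwarz and the three uniform bounds. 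Substituting this into the Euclidean weakly convex inequality \eqref{eq:Euclidean weak cvx} applied in $\R^p$, and identifying the tangential component of $\widetilde\nabla h(\mX)$ with $\widetilde\nabla_\calR h(\mX)\in\partial_\calR h(\mX)$, delivers the claim with $c = \tfrac{\tau}{2} + \tfrac{LKBM}{2}$.

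The only genuinely new element over the Stiefel proof is ingredient (iii): on $\stiefel(n,r)$ the second-order deviation estimate is an exact algebraic identity, whereas here I obtain it from Taylor expansion together with compactness. Everything else is a routine consequence of compactness of $\calM$ and the full-row-rank hypothesis on $DF$. The price paid for this generality is that the resulting constant $c$ is no longer explicit, but this is immaterial for the downstream convergence arguments, which require only its finiteness.
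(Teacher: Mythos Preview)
Your proposal is correct and follows essentially the same route as the paper's own proof: both decompose the Euclidean subgradient into tangential and normal parts, use the explicit normal projector $DF(\mX)^\top(DF(\mX)DF(\mX)^\top)^{-1}DF(\mX)$, bound the subgradient norm via Lipschitz continuity on a bounded open convex neighborhood, and control $\|DF(\mX)(\mY-\mX)\|_F$ by $\mathcal{O}(\|\mY-\mX\|_F^2)$ through Taylor expansion and $F(\mX)=F(\mY)=\mathbf{0}$. The only cosmetic difference is that you first move $DF(\mX)^\top$ across the inner product before applying Cauchy--Schwarz and track explicit constants $L,K,B,M$, whereas the paper applies Cauchy--Schwarz directly and leaves the constants implicit.
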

\begin{proof}
	By our assumptions on $F$ and \cite[Equation (3.19)]{absil2009optimization}, we have $\T_{\mX}\calM= \text{ker}(DF(\mX))$, where $\text{ker}(T)$ denotes the kernel of the operator $T$.  Thus, the projector $\calP_{\T_{\mX}\calM}^{\perp}$ is given by $DF(\mX)^\top (DF(\mX)DF(\mX)^\top)^{-1}DF(\mX)$.  Following the proof of \Cref{thm:Riemannian subgradient inequality}, we need to bound 
	\begin{align*}
	&\left\langle \widetilde \nabla h(\mX),\calP_{\T_{\mX}\calM}^{\perp} (\mY - \mX) \right\rangle \ge -\| \widetilde \nabla h(\mX)\|_F \cdot \| \calP_{\T_{\mX}\calM}^{\perp} ( \mY - \mX) \|_F \\
	=& -\| \widetilde \nabla h(\mX)\|_F \cdot \| DF(\mX)^\top (DF(\mX)DF(\mX)^\top)^{-1}DF(\mX)( \mY - \mX) \|_F \\
	\geq& -\| \widetilde \nabla h(\mX)\|_F \cdot \max_{\mX \in \calM} \| DF(\mX)^\top (DF(\mX)DF(\mX)^\top)^{-1}\|_F \cdot \| DF(\mX)( \mY - \mX) \|_F.
	\end{align*}
	Since $h$ is weakly convex on $\R^p$, it is Lipschitz continuous on { any bounded open convex set $\calU$ that contains $\calM$}. Thus, the term $\| \widetilde \nabla h(\mX)\|_F$ is bounded above. Moreover, the compactness of $\calM$ implies that the term $\max_{\mX \in \calM} \| DF(\mX)^\top (DF(\mX)DF(\mX)^\top)^{-1}\|_F$ is also bounded above. Lastly, observe that $F(\mY ) = F(\mX) + DF(\mX)( \mY - \mX) + \mathcal{O} ( \| \mY -\mX \|_F^2)$ by Taylor's theorem and $F(\mX)=F(\mY)=\bm{0}$ whenever $\mX,\mY\in\calM$. Hence, we have $\| DF(\mX)( \mY - \mX) \|_F = \mathcal{O} ( \| \mY -\mX \|_F^2)$. Putting these together, we conclude that $\left| \left\langle \widetilde \nabla h(\mX),\calP_{\T_{\mX}\calM}^{\perp} ( \mY - \mX) \right\rangle \right| = \mathcal{O} ( \| \mY -\mX \|_F^2)$. The rest of the argument is similar to that in the proof of \Cref{thm:Riemannian subgradient inequality}. 
\end{proof}

\paragraph{General retractions} The notion of retraction introduced in \Cref{sec:opt over stiefel} for the Stiefel manifold can be easily adapted to that for general manifolds. Specifically, a retraction on the manifold $\calM$ is a smooth map $\Retr:\T\calM \rightarrow \calM$ from the tangent bundle $\T\calM$ onto the manifold $\calM$ that satisfies $\Retr_{\mX}(\bm{0})=\mX$ and $D\Retr_{\mX}(\bm{0})=\bm{Id}$ for all $\mX\in\calM$. Unlike the polar decomposition-based retraction on the Stiefel manifold, a general retraction may not have the Lipschitz-like property in~\Cref{lem:nonexpansive polar retra}. Nevertheless, a retraction on a compact submanifold $\calM$ satisfies a \emph{second-order boundedness} property~\cite{boumal2018global}; i.e., there exists a constant $b\ge0$ such that for all $\mX \in \calM$ and $\vxi \in \T_{\mX}\calM$,
\[ \| \Retr_{\mX}(\vxi) - \mX - \vxi \|_F \le b \|\vxi\|_F^2. \]
This allows us to replace the result in~\Cref{lem:nonexpansive polar retra} by
\begin{align*}
\left\|\Retr_{\mX}(\vxi) - \overline \mX \right\|_F 	&= \| (\mX + \vxi)  - \overline \mX + \Retr_{\mX}(\vxi) - (\mX+\vxi) \|_F \\
&\le \|\mX + \vxi  - \overline \mX\|_F + b \|\vxi\|_F^2,
\end{align*}
which holds for any $\mX,\overline \mX\in \calM$ and $\vxi\in\T_{\mX}\calM$. Although the above inequality has the extra term $b\|\vxi\|_F^2$, it can still be used to establish convergence guarantees (with slightly worse constants) for the Riemannian subgradient-type methods considered in \Cref{subsec:methods}. Specifically, by following the analyses in~\Cref{sec:global convergence,sec:linear convergence}, we can show that for problem~\eqref{eq:stiefel opt problem} with the Stiefel manifold $\stiefel(n,r)$ being replaced by a manifold of the type considered in \Cref{cor:Riemannian subgradient inequality}, the iteration complexity of Riemannian subgradient-type methods for computing an $\varepsilon$-nearly stationary point is $\calO(\varepsilon^{-4})$, and the Riemannian subgradient and incremental subgradient methods will achieve a local linear convergence rate if the instance satisfies the sharpness property in \Cref{def:sharpness}.

\section{Applications and Numerical Results}\label{sec:application}

In this section, we apply the Riemannian subgradient-type methods in \Cref{subsec:methods} to solve the RSR and orthogonal DL problems. As described in \Cref{sec:introduction}, the objective functions of both problems are weakly convex. Thus, \Cref{thm:rigd global rate} and \Cref{thm:stochastic subgradient global rate} ensure that the Riemannian subgradient-type methods with arbitrary initialization will have a global convergence rate of $\calO({k^{-1/4}})$ when utilized to solve those problems. We also discuss the sharpness properties of the RSR and orthogonal DL problems.
For reproducible research, our code for generating the numerical results can be found at
\begin{center}
	\url{https://github.com/lixiao0982/Riemannian-subgradient-methods}
\end{center}

\subsection{Robust subspace recovery (RSR)} \label{subsec:RSR}
We begin with the DPCP formulation \eqref{eq:dpcp stiefel} of the RSR problem, which has a relatively simpler form than the least absolute deviation (LAD) formulation \eqref{eq:RSR}. Recall that the objective function in~\eqref{eq:dpcp stiefel} takes the form $ \stiefel(n,r) \ni \mX \mapsto f(\mX) = \frac{1}{m} \sum_{i=1}^m \left\|\widetilde \vy_i^\top \mX \right\|_2 $, where $\widetilde \vy_i\in\R^n$ ($i=1,\ldots,m$) denotes the $i$-th column of $\widetilde \mY = \begin{bmatrix} \mY & \mO \end{bmatrix} \mGamma \in \R^{n\times m}$, the columns $\vy_i$ of $\mY \in \R^{n\times m_1}$ form inlier points spanning a $d$-dimensional subspace $\calS$ with $r=n-d$, the columns $\vo_i$ of $\mO\in\R^{n\times m_2}$ form outlier points, and $\mGamma \in \R^{m\times m} $ is an unknown permutation. Note that $f$ is rotationally invariant; i.e., $f(\mX) = f(\mX\mR)$ for any $\mX\in\stiefel(n,r)$ and $\mR\in \stiefel(r,r)$. 

\paragraph{Sharpness} Let $\mS^\perp \in \stiefel(n,r)$ be an orthonormal basis of $\calS^{\perp}$.
Since the goal of DPCP is to find an orthonormal basis (but not necessary $\mS^\perp$) for $\calS^\perp$, we are interested in the elements in the set $\calX = \{\mS^\perp\mR \in \R^{n\times r}:\mR\in \stiefel(r,r)\}$. Due to rotation invariance, $f$ is constant on $\calX$. To study the sharpness property of problem \eqref{eq:dpcp stiefel}, let us introduce two quantities that reflect how well distributed the inliers and outliers are:
\begin{align}
 & c_{\mY,\min}:=\frac{1}{m_1}\inf_{\mD\in\R^{n\times \ell}, \atop { \|\mD\|_F =1,\, {\rm col}(\mD) \subseteq \calS }} \ \sum_{i=1}^{m_1} \|\vy_i^\top\mD\|_2, 
\label{eq:cXmin} \\
 &  c_{\mO,\max}:=\frac{1}{m_2}\sup_{\mB\in \R^{n\times r}, \atop \|\mB\|_F=1} \ \sum_{i=1}^{m_2} \|\vo_i^\top\mB\|_2.  
\label{eq:cOmax}
\end{align}
 Here, $\ell = \min\{d,r\}$ and ${\rm col}(\mD)$ denotes the column space of $\mD$.
 In a nutshell, larger values of $c_{\mY,\min}$ (respectively, smaller values of $c_{\mO,\max}$) correspond to a more uniform distribution of inliers (respectively, outliers). As the following proposition shows, the quantities $c_{\mY,\min}$ and $c_{\mO,\max}$ can be used to capture the sharpness property of the DPCP formulation \eqref{eq:dpcp stiefel}.

\begin{prop}\label{prop:sharp of DPCP}
	Suppose that $m_2c_{\mO,\max} \le \frac{1}{2}  m_1c_{\mY,\min}$. Then, the DPCP formulation~\eqref{eq:dpcp stiefel} has $\calX$ as a set of weak sharp minima with parameter $ \alpha = \frac{1}{m} \big( \frac{1}{2}  m_1c_{\mY,\min}  - m_2 c_{\mO,\max}  \big)  >0 $ over the set $\calB=\stiefel(n,r)$; i.e., 
	\[
	f(\mX) - f(\mS^\perp) \ge \alpha \dist(\mX,\calX),  \ \ \forall \ \mX\in \stiefel(n,r).
	\]
\end{prop}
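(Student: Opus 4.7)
The plan is to exploit the rotation invariance of $f$ on $\calX$ (noted just before the proposition: $f(\mS^\perp \mR) = f(\mS^\perp)$ for any $\mR \in \stiefel(r,r)$) and work with the closest point in $\calX$ to $\mX$ rather than with $\mS^\perp$ itself. Concretely, I would let $\mR^* \in \argmin_{\mR \in \stiefel(r,r)} \|\mX - \mS^\perp \mR\|_F$ and set $\mQ := \mS^\perp \mR^*$, so that $\dist(\mX,\calX) = \|\mX - \mQ\|_F$ and $f(\mQ) = f(\mS^\perp)$. The target inequality then reduces to $f(\mX) - f(\mQ) \ge \alpha \|\mX - \mQ\|_F$, whose advantage over the form stated in the proposition is that $\mX - \mQ$ has a clean geometric structure via the SVD of $(\mS^\perp)^\top \mX$.

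I would next split the sum defining $f$ into an inlier block and an outlier block. Since the columns of $\mQ$ lie in $\calS^\perp$ while each inlier $\vy_i$ lies in $\calS$, one has $\vy_i^\top \mQ = 0$ and hence $\|\vy_i^\top \mX\|_2 = \|\vy_i^\top \calP_\calS \mX\|_2$. Invoking~\eqref{eq:cXmin} with $\mD \propto \calP_\calS \mX$ yields
\[ \frac{1}{m}\sum_{i=1}^{m_1} \|\vy_i^\top \mX\|_2 \;\ge\; \frac{m_1 c_{\mY,\min}}{m} \|\calP_\calS \mX\|_F. \]
For the outlier block, a reverse triangle inequality combined with~\eqref{eq:cOmax} applied to $\mB \propto \mX - \mQ$ gives
\[ \frac{1}{m}\sum_{j=1}^{m_2}\bigl( \|\vo_j^\top \mX\|_2 - \|\vo_j^\top \mQ\|_2 \bigr) \;\ge\; -\frac{m_2 c_{\mO,\max}}{m}\|\mX - \mQ\|_F. \]

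The remaining ingredient is a purely geometric lemma $\|\calP_\calS \mX\|_F \ge \tfrac{1}{\sqrt{2}}\|\mX - \mQ\|_F$, which I would prove by computing the SVD $(\mS^\perp)^\top \mX = \mU' \mSigma' \mV'^\top$ and identifying $\mR^*$ with the polar factor $\mU' \mV'^\top$. Using $\mX^\top \mX = \mId_r$ gives $\|\calP_\calS \mX\|_F^2 = r - \sum_i \sigma_i^2$ and $\|\mX - \mQ\|_F^2 = 2r - 2\sum_i \sigma_i$, where all singular values $\sigma_i$ of $(\mS^\perp)^\top \mX$ satisfy $\sigma_i \in [0,1]$; the elementary inequality $\sigma_i \ge \sigma_i^2$ on $[0,1]$ then yields the lemma. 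Combining the three estimates and invoking the hypothesis $m_2 c_{\mO,\max} \le \tfrac{1}{2} m_1 c_{\mY,\min}$ produces the claimed sharpness inequality (the geometric constant $\tfrac{1}{\sqrt{2}}$ can be relaxed to $\tfrac{1}{2}$ to match the clean form of $\alpha$ stated in the proposition).

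The main technical obstacle is a dimension subtlety in applying~\eqref{eq:cXmin}: when $r > d$ one has $\ell = d < r$, and $\calP_\calS \mX \in \R^{n \times r}$ has too many columns to be used directly as a feasible $\mD$ in the infimum. I would handle this by factoring $\calP_\calS \mX = \mS_0 \mC$ with $\mS_0 \in \R^{n \times d}$ an orthonormal basis of $\calS$ and $\mC \in \R^{d \times r}$, taking a compact SVD of $\mC$ to re-express the inlier sum as a sum over $d$ orthonormal directions in $\calS$ weighted by the singular values of $\mC$, and applying~\eqref{eq:cXmin} column-block by column-block; since $\|\calP_\calS \mX\|_F$ equals the Frobenius norm of the singular value factor, the resulting lower bound still takes the clean form $m_1 c_{\mY,\min} \|\calP_\calS \mX\|_F$.
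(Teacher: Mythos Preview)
Your proposal is correct and follows essentially the same route as the paper's proof: split $f(\mX)-f(\mS^\perp)$ into inlier and outlier blocks, bound the inlier block below by $m_1 c_{\mY,\min}$ times $\|\calP_{\calS}\mX\|_F$ (the paper writes this as $\|\sin(\widetilde\mPhi)\|_F$ via principal angles, but it is the same quantity), bound the outlier block via the reverse triangle inequality and $c_{\mO,\max}$, and then relate $\|\calP_{\calS}\mX\|_F$ to $\dist(\mX,\calX)$. Your elementary argument $\sigma_i \ge \sigma_i^2$ on $[0,1]$ for the singular values of $(\mS^\perp)^\top\mX$ is in fact slightly cleaner than the paper's half-angle manipulation and yields the tighter constant $1/\sqrt{2}$ (the paper obtains $1/2$), though you correctly note that relaxing to $1/2$ recovers the stated $\alpha$. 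The dimension subtlety you flag when $r>d$ is exactly the point where the paper's principal-angle SVD pays off: the factor $\mH^\top$ with orthonormal columns drops out of $\|\vy_i^\top\mX\|_2$, leaving an $n\times\ell$ matrix $\mS\mV\sin(\widetilde\mPhi)$ that is directly feasible for~\eqref{eq:cXmin}. Your compact-SVD fix does the same thing---just streamline the phrasing: rather than ``column-block by column-block,'' observe that $\|\vy_i^\top\calP_{\calS}\mX\|_2 = \|\vy_i^\top\mS_0\mV'\mSigma'\|_2$ (the orthonormal right factor $\mH'^\top$ preserves the $\ell_2$-norm), so the single $n\times\ell$ block $\mS_0\mV'\mSigma'$ can be plugged into~\eqref{eq:cXmin} directly.
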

\begin{proof}
	Let $ \mX \in\stiefel(n,r)$ be arbitrary. For any $\mX^\star \in \calP_{\calX}(\mX)$, we have $f(\mX^\star)=f(\mS^\perp)$ and 
\e\label{eq:dpcp sharp 1}
\begin{split}
	f(\mX) - f(\mS^\perp) &= \frac{1}{m}  \sum_{i=1}^m \left\|\widetilde \vy_i^\top \mX \right\|_2 -  \frac{1}{m}  \sum_{i=1}^m \left\|\widetilde \vy_i^\top \mS^\perp  \right\|_2\\
	& = \frac{1}{m}  \sum_{i=1}^{m_1} \left\| \vy_i^\top \mX \right\|_2  +  \frac{1}{m} \left(  \sum_{i=1}^{m_2} \left\| \vo_i^\top \mX \right\|_2 -  \sum_{i=1}^{m_2} \left\| \vo_i^\top \mS^\perp  \right\|_2 \right),
\end{split}
\ee
where $\vy_i$ (respectively, $\vo_i$) is the $i$-th column of $\mY$ (respectively, $\mO$), and the second line follows because the inliers $\{\vy_i\}_{i=1}^{m_1}$ are orthogonal to $\mS^\perp$. Now, let us derive lower bounds for the two terms on the right-hand side separately. 

For the first term, let $\mS\in\R^{n\times d}$ be an orthonormal basis of the subspace $\calS$. By projecting $\mX$ onto the orthogonal subspaces $\calS$ and $\calS^\perp$, we have
	\e \label{eq:decompose X}
	   \mX = \mS\mS^\top\mX + \mS^\perp(\mS^\perp)^\top\mX.
	\ee
	For $i = 1,\ldots, r$, let $\phi_i = \arccos(\sigma_i((\mS^\perp)^\top \mX))$ be the $i$-th smallest principal angle between the subspaces spanned by $\mX$ and $\calS^\perp$, where $\sigma_i(\cdot)$ denotes the $i$-th largest singular value \cite{stewart1990matrix}. Then, we can write $(\mS^\perp)^\top\mX = \mU\cos(\mPhi)\mW^\top$, where $\cos(\mPhi) \in \R^{r\times r}$ is the diagonal matrix with $\cos(\phi_1) \ge \cdots \ge \cos(\phi_r)$ on its diagonal and $\mU\in\R^{r\times r}$, $\mW\in\R^{r\times r}$ are orthogonal matrices. 
	On the other hand, according to \cite[Theorem 2.7]{knyazev2007majorization}, the $i$-th smallest principal angle between the subspaces spanned by $\mX$ and $\calS$ is $\widetilde{\phi}_i = \frac{\pi}{2}-\phi_{r-i+1}$, where $i=1,\ldots,\ell$ with $\ell =\min\{d,r\}$. Hence, we can write $\mS^\top\mX = \mV\sin(\widetilde\mPhi)\mH^\top$, where $\sin(\widetilde\mPhi)\in \R^{\ell\times \ell}$ is the diagonal matrix with $\sin(\phi_r) \ge \cdots \ge \sin(\phi_{r-\ell+1})$ on its diagonal and $\mV\in\R^{d\times \ell}$, $\mH \in \R^{r\times \ell}$ are orthogonal matrices. These, together with \eqref{eq:decompose X}, yield $\mX = \mS \mV\sin(\widetilde\mPhi) \mH^\top + \mS^\perp\mU\cos(\mPhi) \mW^\top$. Hence, we can bound
	\e \label{eq:dpcp term 1}
	\begin{split}
		&\sum_{i=1}^{m_1}\|\vy_i^\top \mX\|_2 =  \sum_{i=1}^{m_1}\|\vy_i^\top \mS \mV\sin(\widetilde\mPhi) \|_2 \\
		=& \ \|\mS \mV\sin(\widetilde\mPhi)\|_F  \sum_{i=1}^{m_1}\left\|\vy_i^\top \frac{\mS \mV\sin(\widetilde\mPhi)}{\|\mS \mV\sin(\widetilde\mPhi)\|_F} \right\|_2 
		 \ge m_1 c_{\mY,\min} \|\sin(\widetilde\mPhi)\|_F,
	\end{split}
	\ee
	where  $c_{\mY,\min}$ is defined in \eqref{eq:cXmin}. On the other hand, observe that
	\e\label{eq:dist sin}
	\begin{split}
		&\dist^2(\mX,\calX) = \minimize_{\mR\in \stiefel(r,r)}\|\mX - \mS^\perp \mR\|_F^2 =  \|\mX - \mS^\perp \mU\mW^\top\|_F^2 \\
		=& \ 2 r - 2\trace(\cos(\mPhi)) = 2\sum_{i=1}^{\ell} (1 -\cos(\phi_i))
		= 4\sum_{i=1}^{\ell}\sin^2(\phi_{i}/2) \leq 4 \|\sin(\widetilde\mPhi)\|_F^2,
		\end{split}
	\ee
	where the second equality follows from the solution to the orthogonal Procrustes problem~\cite{S66} and the fourth equality utilizes the fact that the number of nonzero principal angles in $\mPhi$ is at most $\ell = \min\{d,r\}$ \cite[Theorem 2.7]{knyazev2007majorization}. Combining \eqref{eq:dpcp term 1} and \eqref{eq:dist sin} gives 
    \e \label{eq:dpcp term 1 bound}
    	\sum_{i=1}^{m_1}\|\vy_i^\top \mX\|_2 \geq  \frac{1}{2}{m_1 c_{\mY,\min}} \dist(\mX,\calX).
    \ee
	
	Now, let us consider the second term on the right-hand side of \eqref{eq:dpcp sharp 1}. Let $\mR^\star = \argmin_{\mR\in \stiefel(r,r)} \|\mX - \mS^{\perp}\mR\|_F$. Then, we have
	\begin{equation}\begin{split}
	&\left|\sum_{i=1}^{m_2}\|\vo_i^\top \mX\|_2 - \|\vo_i^\top \mS^\perp\|_2 \right| \le \sum_{i=1}^{m_2} \left|\|\vo_i^\top \mX\|_2 - \|\vo_i^\top \mS^\perp\mR^\star\|_2 \right|  \\
	\le& \ \sum_{i=1}^{m_2}\|\vo_i^\top (\mX- \mS^\perp\mR^\star)\|_2 = \|\mX- \mS^\perp\mR^\star\|_F \sum_{i=1}^{m_2}\left\|\vo_i^\top \frac{\mX- \mS^\perp\mR^\star}{\|\mX- \mS^\perp\mR^\star\|_F}\right\|_2\\
	 \le& \  m_2  c_{\mO,\max} \dist(\mX,\calX), 
	\end{split}\label{eq:dpcp term 2}\end{equation}
	where   $c_{\mO,\max}$ is defined in \eqref{eq:cOmax}.
	
	By plugging \eqref{eq:dpcp term 1 bound} and \eqref{eq:dpcp term 2} into \eqref{eq:dpcp sharp 1}, the desired result follows.
\end{proof}

The requirement $m_2c_{\mO,\max} \le \frac{1}{2}  m_1c_{\mY,\min}$ in \Cref{prop:sharp of DPCP} determines the number of outliers that can be tolerated.  Now, let us give probabilistic estimates of the quantities $c_{\mY,\min}$ and $c_{\mO,\max}$ under the popular \emph{Haystack model} (see, e.g.,~\cite{lerman2015robust,maunu2019well,zhang2014novel}) of the input data. The model stipulates that the inliers $\{\vy_i\}_{i=1}^{m_1}$ are i.i.d.~according to the Gaussian distribution $\calN(\mathbf{0},\frac{1}{d}\calP_{\calS})$ with $\calP_{\calS}$ being the orthogonal projector onto the $d$-dimensional subspace $\calS$, while the outliers $\{\vo_i\}_{i=1}^{m_2}$ are i.i.d.~according to the Gaussian distribution $\calN(\mathbf{0},\frac{1}{n}\mId_n)$.

\begin{lem}\label{lem:statistics of outliers and inliers}
	Under the Haystack model, the event
	\[
	     c_{\mY,\min} \geq \sqrt{\frac{2}{d\pi}} - \sqrt{\frac{8\ell}{m_1}} - \frac{c_1}{\sqrt{m_1}}
	\]
	will hold with probability at least $1-2\exp(-\frac{c_1^2d}{2})$ for some constant $c_1>0$, where $\ell = \min\{d,r\}$. Moreover, the event
	\[
	c_{\mO,\max}   \leq  \frac{1}{\sqrt{n}} + \sqrt{\frac{8r}{m_2}} + 	\frac{c_2}{\sqrt{m_2}}
	\]
	will hold with probability at least $1-2\exp(-\frac{c_2^2n}{2})$ for some constant $c_2>0$.
\end{lem}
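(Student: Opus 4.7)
Both estimates are obtained by the same three-step recipe: rewrite the quantity in a canonical Gaussian form, control its pointwise expectation, and then bound the uniform deviation from that expectation by combining Gaussian Lipschitz concentration with a covering argument over the feasible parameter set.

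\emph{Lower bound on $c_{\mY,\min}$.} Let $\mS\in\stiefel(n,d)$ be an orthonormal basis of $\calS$. Since $\vy_i\sim \calN(\mathbf{0},\tfrac{1}{d}\calP_{\calS})$, I would write $\vy_i = \tfrac{1}{\sqrt{d}}\mS\vz_i$ with $\vz_i \sim_{i.i.d.} \calN(\mathbf{0},\mId_d)$. Any feasible $\mD$ in the definition of $c_{\mY,\min}$ then factors uniquely as $\mS\mT$ with $\mT\in\R^{d\times\ell}$ and $\|\mT\|_F=\|\mD\|_F=1$, giving $\vy_i^\top\mD = \tfrac{1}{\sqrt{d}}\vz_i^\top\mT$. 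Thus the first claim reduces to the uniform lower bound
\[
\inf_{\|\mT\|_F=1}\ \frac{1}{m_1}\sum_{i=1}^{m_1}\|\mT^\top\vz_i\|_2 \ \geq\ \sqrt{2/\pi}-\sqrt{8\ell}-c_1
\]
with probability at least $1-2\exp(-c_1^2 d/2)$. For the pointwise mean, let $\sigma_1,\ldots,\sigma_\ell$ be the singular values of $\mT$; then $\|\mT^\top\vz\|_2^2 = \sum_{j=1}^\ell \sigma_j^2 w_j^2$ for i.i.d.\ $w_j\sim\calN(0,1)$. The map $\vs\mapsto \E_\vw[\sqrt{\sum_j s_j w_j^2}]$ is an expectation of functions concave in $\vs$, hence concave on the simplex $\{\vs\ge\mathbf{0}:\sum_j s_j=1\}$, so it attains its minimum at a vertex, yielding $\E[\|\mT^\top\vz\|_2]\ge \E|w_1|=\sqrt{2/\pi}$ for every feasible $\mT$.

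For the uniform deviation, for each fixed $\mT$ the map $\vz\mapsto\|\mT^\top\vz\|_2$ is $\|\mT\|_2\le\|\mT\|_F=1$ Lipschitz, so Gaussian Lipschitz concentration gives a subgaussian fluctuation of order $O(1/\sqrt{m_1})$ for $\tfrac{1}{m_1}\sum_i\|\mT^\top\vz_i\|_2$ around its mean. To upgrade this to a uniform lower bound I would employ an $\varepsilon$-net over the feasible parameter set---equivalently, over the set of rank-at-most-$\ell$ trace-$1$ PSD matrices $\mT\mT^\top$---whose metric entropy scales with $\ell$ rather than $d\ell$ thanks to the rank constraint, and then chain the per-scale fluctuations. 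Combining this uniform deviation with the pointwise mean and dividing by $\sqrt{d}$ yields the first bound.

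\emph{Upper bound on $c_{\mO,\max}$.} The argument is entirely parallel. Writing $\vo_i=\tfrac{1}{\sqrt n}\vg_i$ with $\vg_i\sim_{i.i.d.}\calN(\mathbf{0},\mId_n)$, one has $\|\vo_i^\top\mB\|_2=\tfrac{1}{\sqrt n}\|\mB^\top\vg_i\|_2$ and, by Jensen, $\E[\|\mB^\top\vg\|_2]\le\sqrt{\E\|\mB^\top\vg\|_2^2}=\|\mB\|_F=1$ for every feasible $\mB$. Since $\vg\mapsto\|\mB^\top\vg\|_2$ is $\|\mB\|_2\le 1$ Lipschitz, the same Gaussian concentration plus an $\varepsilon$-net argument---this time exploiting the rank-at-most-$r$ structure of $\mB\mB^\top$---produces a uniform upper bound $1+\sqrt{8r}+c_2$ with probability $\ge 1-2\exp(-c_2^2 n/2)$, which upon dividing by $\sqrt n$ gives the claimed estimate. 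The main technical obstacle in both parts is obtaining the tight $\sqrt{\ell/m_1}$ (resp.\ $\sqrt{r/m_2}$) dimension dependence rather than the naive $\sqrt{d\ell/m_1}$ (resp.\ $\sqrt{nr/m_2}$) scaling one would get from a crude volumetric $\varepsilon$-net on the Frobenius unit sphere; this forces the use of a covering/chaining scheme tailored to the bounded-rank PSD matrices $\mT\mT^\top$ and $\mB\mB^\top$ instead of treating $\mT$ and $\mB$ as generic elements of $\R^{d\times\ell}$ and $\R^{n\times r}$.
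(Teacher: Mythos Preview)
Your reductions to canonical Gaussians and your pointwise expectation bounds are correct and essentially coincide with the paper's. The gap is in the uniform deviation step. You claim that by passing from $\mT$ (or $\mB$) to the rank-at-most-$\ell$ (or $r$) trace-one PSD matrix $\mT\mT^\top$ (or $\mB\mB^\top$) the metric entropy drops to order $\ell$ (or $r$). This is not so: the set $\{\mT\mT^\top:\mT\in\R^{d\times\ell},\ \|\mT\|_F=1\}$ has dimension of order $d\ell$, so any $\varepsilon$-net in Frobenius norm still has log-cardinality of order $d\ell\log(1/\varepsilon)$, and Dudley/chaining over it yields a deviation of order $\sqrt{d\ell/m_1}$ (respectively $\sqrt{nr/m_2}$), which is exactly the crude bound you were trying to avoid. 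In short, the ``rank constraint'' you invoke is already saturated by the dimensions of $\mT$ and $\mB$ and buys nothing.

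The paper sidesteps covering entirely. After the same split into expectation plus centered supremum, it bounds the \emph{expected} supremum by Rademacher symmetrization followed by Maurer's vector contraction inequality for $\ell_2$-valued functions: this replaces $\sum_i\epsilon_i\|\mB^\top\vo_i\|_2$ by $\sqrt{2}\sum_{i,j}\epsilon_{ij}\vb_j^\top\vo_i=\sqrt{2}\langle\mV,\mB\rangle$, whose supremum over $\|\mB\|_F=1$ is $\sqrt{2}\,\|\mV\|_F$ with $\E\|\mV\|_F^2\le m_2r$, giving the $\sqrt{r/m_2}$ (and analogously $\sqrt{\ell/m_1}$) scaling directly. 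Gaussian Lipschitz concentration is then applied once to the whole supremum, viewed as a $\sqrt{m_2}$-Lipschitz function of the stacked Gaussians, to pass from the expectation bound to the high-probability bound. The contraction step is the missing ingredient in your plan; without it (or an equivalent device) the stated dimension dependence is not attainable by the route you sketch.
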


The proof of \Cref{lem:statistics of outliers and inliers} can be found in \Cref{sec:appendix}. \Cref{lem:statistics of outliers and inliers} implies that under the Haystack model, if the numbers of inliers $m_1$ and outliers $m_2$ satisfy $m_1 \gtrsim d\ell$ and $m_2 \gtrsim nr$, then we will have $c_{\mY,\min} \gtrsim \frac{1}{\sqrt{d}}$ and $c_{\mO,\max} \lesssim \frac{1}{\sqrt{n}}$ with high probability. Combining \Cref{thm:stiefel linear rate}, \Cref{prop:sharp of DPCP}, and  \Cref{lem:statistics of outliers and inliers}, we see that as long as 
\e\label{eq:number of outliers}
  m_2 \ \lesssim \ \sqrt{\frac{n}{d}} m_1
\ee
so that $m_2c_{\mO,\max} \le \frac{1}{2}  m_1c_{\mY,\min}$, the Riemannian subgradient and incremental subgradient methods with geometrically diminishing stepsizes and a proper initialization will converge linearly to an orthonormal basis of $\calS^{\perp}$. One initialization strategy is to take the bottom eigenvectors of $\widetilde \mY \widetilde \mY^\top$ \cite{maunu2019well,zhu2018dual}.

It is instructive to compare the bound~\eqref{eq:number of outliers} with those in the literature. When $d=\calO(1)$ or when both $d$ and $r$ are on the order of $n$, our bound~\eqref{eq:number of outliers} holds in the regime $m \gtrsim n^2$. In this regime, the algorithms proposed in~\cite{zhang2014novel,maunu2019well} can recover $\calS$ as long as $m_2 \lesssim \frac{\sqrt{n(n-d)}}{d} m_1$; see~\cite[Section 5.5.2]{maunu2019well}. Such a bound is superior to ours when $d=\calO(1)$ but is comparable when both $d$ and $r$ are on the order of $n$. When $r=\calO(1)$, our bound~\eqref{eq:number of outliers} holds in the regime $m \gtrsim n$, which is superior to the bound $m_2 \lesssim \frac{n-d}{d}m_1$ established in~\cite{lerman2015robust,zhang2016robust} for the same regime. We remark that there are other works \cite{zhu2018dual,lerman2018fast,xu2012outlier} studying the RSR problem. However, they differ from our work in that they either assume different data models, require additional data structures, or consider the asymptotic setting $m \rightarrow \infty$.

To further demonstrate the power of \Cref{prop:sharp of DPCP}, let us use it to establish the sharpness property of the LAD formulation \eqref{eq:RSR}. To begin, let $g$ be the objective function in \eqref{eq:RSR} and $\mS\in\stiefel(n,d)$ be an orthonormal basis of $\calS$. We are interested in the set $\calD = \{ \mS\mR: \mR\in \stiefel(d,d) \}$, whose elements are different orthonormal bases of $\calS$. Now, observe that for any $\mX\in\stiefel(n,r)$, we can find an orthonormal basis $\mZ\in\stiefel(n,d)$ of $\text{col}(\mX)^\perp$, and vice versa, such that $f(\mX) = g(\mZ)$ (recall that $f$ is the objective function in~\eqref{eq:dpcp stiefel}). Hence, \Cref{prop:sharp of DPCP} asserts that $g(\mZ) - g(\mS) \geq  \alpha \dist(\mX, \calX)$. By invoking \cite[Theorem 2.7]{knyazev2007majorization}, we obtain $\dist(\mX, \calX) = \dist(\mZ, \calD)$, which shows that $\calD$ is a set of weak sharp minima with parameter $\alpha$ over the set $\calB = \stiefel(n,d)$.

\paragraph{Experiments}
\begin{figure}[t]
	\begin{subfigure}{0.48\linewidth}
		\centerline{
			\includegraphics[width=2.5in]{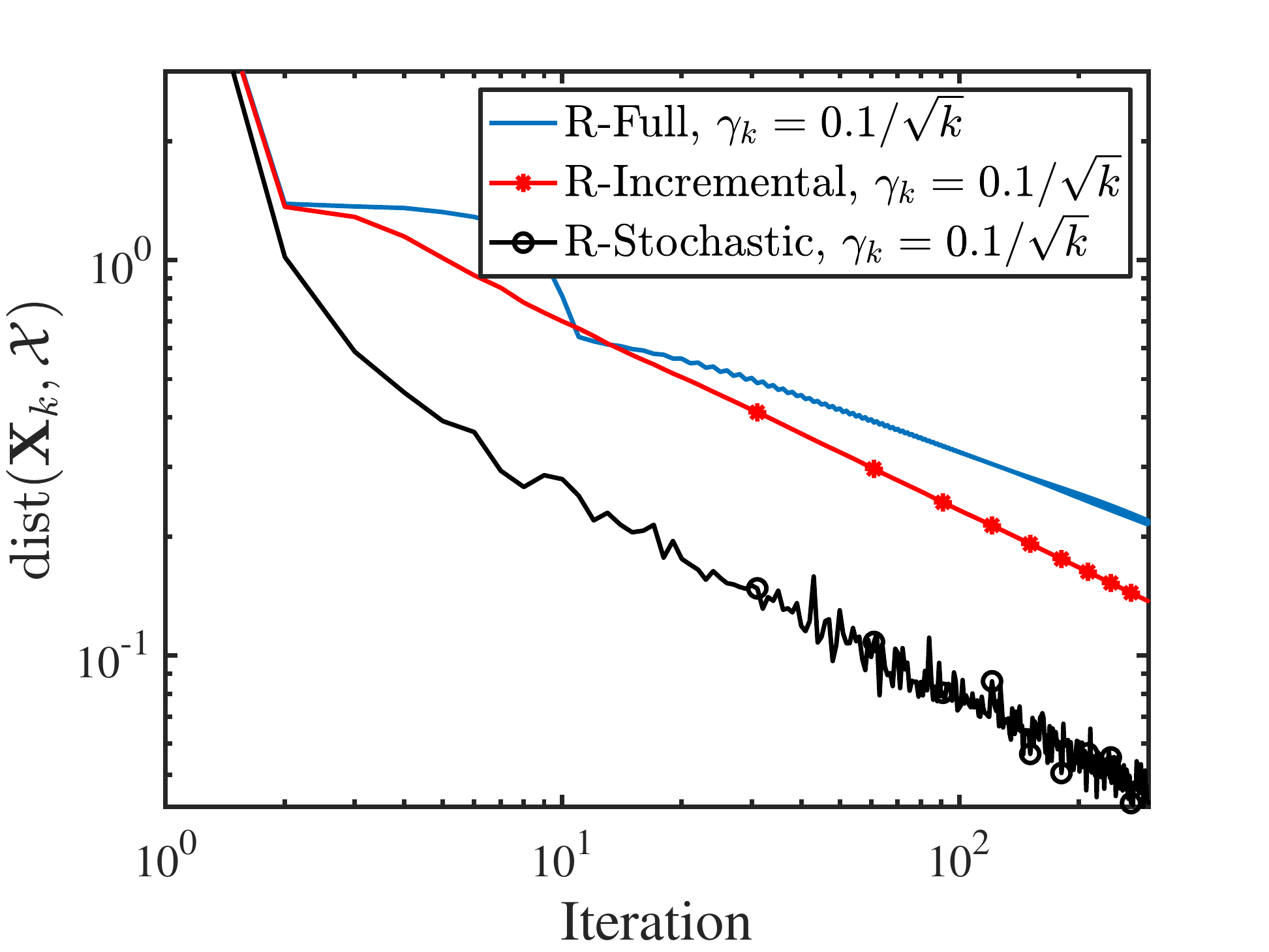}}
		\caption{\footnotesize DPCP: Diminishing stepsizes of the form $\gamma_k=0.1/\sqrt{k}$, $k=1,2,\ldots$ \label{fig:dpcp sublinear}}
	\end{subfigure}
\hfill
	\begin{subfigure}{0.48\linewidth}
		\centerline{
			\includegraphics[width=2.5in]{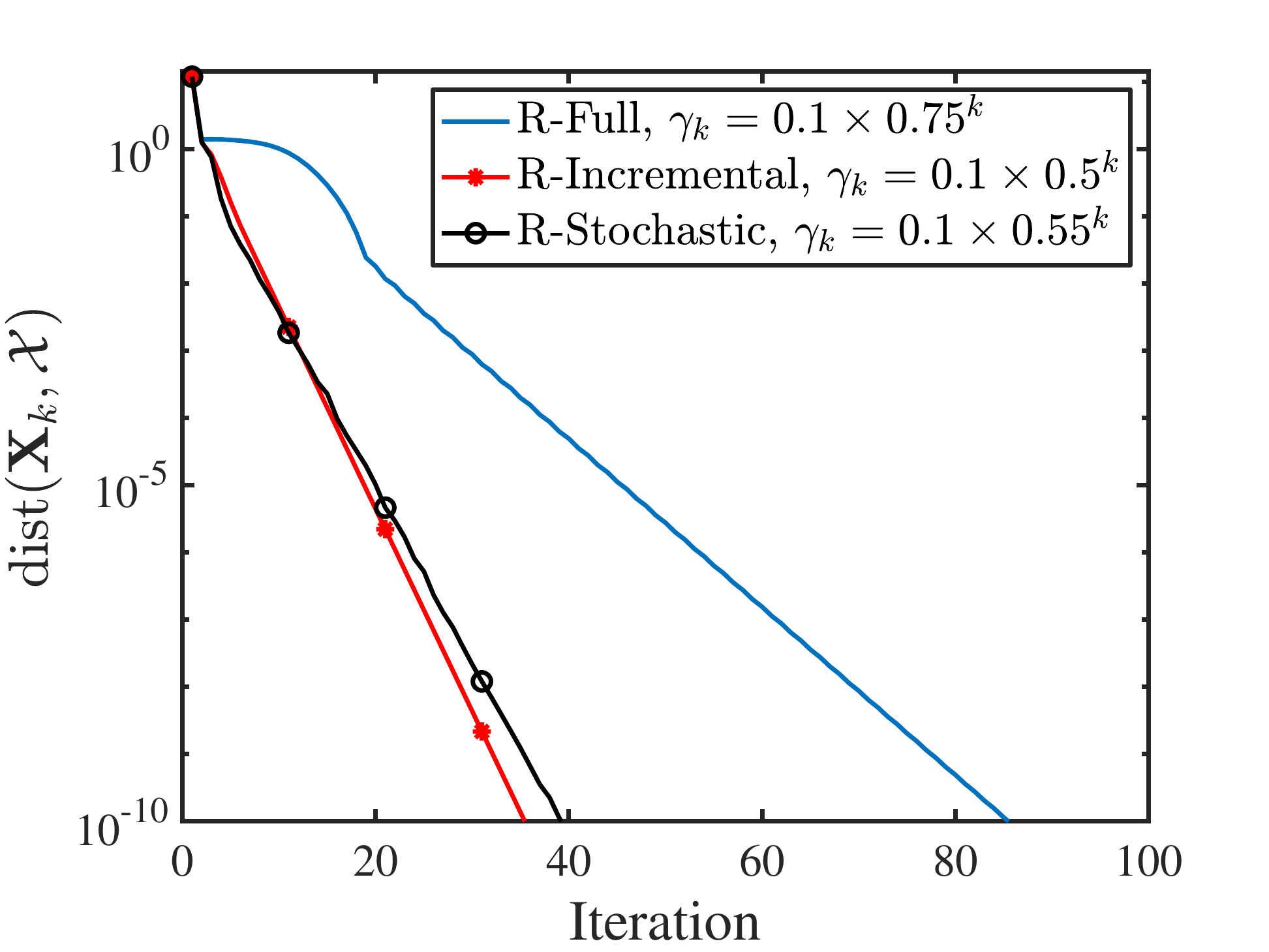}}
		\caption{\footnotesize DPCP: Geometrically diminishing stepsizes of the form $\gamma_k=0.1\times\beta^k$, $k=0,1,\ldots$ \label{fig:dpcp linear}}
	\end{subfigure}
	\caption{\small  Convergence performance of Riemannian subgradient-type methods for the DPCP formulation \eqref{eq:dpcp stiefel}.
	}\label{fig:dpcp experiments}
\end{figure}
We first randomly sample a subspace $\calS$ with co-dimension $r=10$  in ambient dimension $n = 100$. We then generate $m_1 = 1500$ inliers uniformly at random from the unit sphere in  $\calS$ and $m_2 = 3500$  outliers uniformly at random from the unit sphere in  $\R^n$.  We generate a standard Gaussian random vector and use it to initialize all the algorithms, as such an initialization provides comparable performance with the carefully designed initialization in \cite{maunu2019well,zhu2018dual}.  The numerical results are displayed in \Cref{fig:dpcp experiments}.
Sublinear convergence can be observed from the log-log plot in \Cref{fig:dpcp sublinear}, where we use the diminishing stepsizes suggested in \Cref{thm:rigd global rate} and \Cref{thm:stochastic subgradient global rate}. 
In \Cref{fig:dpcp linear}, we use geometrically diminishing stepsizes of the form $\gamma_k = \beta^k\gamma_0$. We fix $\gamma_0 = 0.1$ and tune the best decay factor $\beta$ for each algorithm. A linear rate of convergence can be observed, which corroborates our theoretical results.

\subsection{Orthogonal dictionary learning (ODL)}
We now turn to the orthogonal DL problem. Given $\mY = \mA\mS\in \R^{n\times m}$, where $\mA\in \stiefel(n,n)$ is an unknown orthonormal dictionary and each column of $\mS\in \R^{n\times m}$ is sparse, we can try to recover the columns of $\mA$ one at a time by considering the formulation~\eqref{eq:ODL sphere}, whose objective function takes the form $ \setS^{n-1} \ni \vx \mapsto f(\vx) =  \frac{1}{m}  \sum_{i= 1}^{m} \left| \vy_i^\top \vx \right|$, or to recover the entire dictionary by considering the formulation~\eqref{eq:ODL orthogonal}, whose objective function takes the form $ \stiefel(n,n) \ni \mX \mapsto f(\mX) =  \frac{1}{m}  \sum_{i= 1}^{m} \left\| \vy_i^\top \mX \right\|_1$.

\paragraph{Sharpness} The sharpness property of the formulation \eqref{eq:ODL sphere} has been studied in \cite{bai2018subgradient}, while that of \eqref{eq:ODL orthogonal} has been studied in \cite{wang2019unique} only in the \emph{asymptotic} regime; i.e., when the number of samples $m$ tends to infinity. Although we do not yet know how to establish the sharpness property of \eqref{eq:ODL orthogonal} in the \emph{finite-sample} regime, the following numerical results suggest that problem \eqref{eq:ODL orthogonal} likely possesses such a property, as the Riemannian subgradient-type methods with geometrically diminishing stepsizes exhibit linear convergence behavior, even with a random initialization. We leave the study of the sharpness property of~\eqref{eq:ODL orthogonal} in the finite-sample regime as a future work. 

\paragraph{Experiments}

\begin{figure}[t]
	\begin{subfigure}{0.48\linewidth}
		\centerline{
			\includegraphics[width=2.5in]{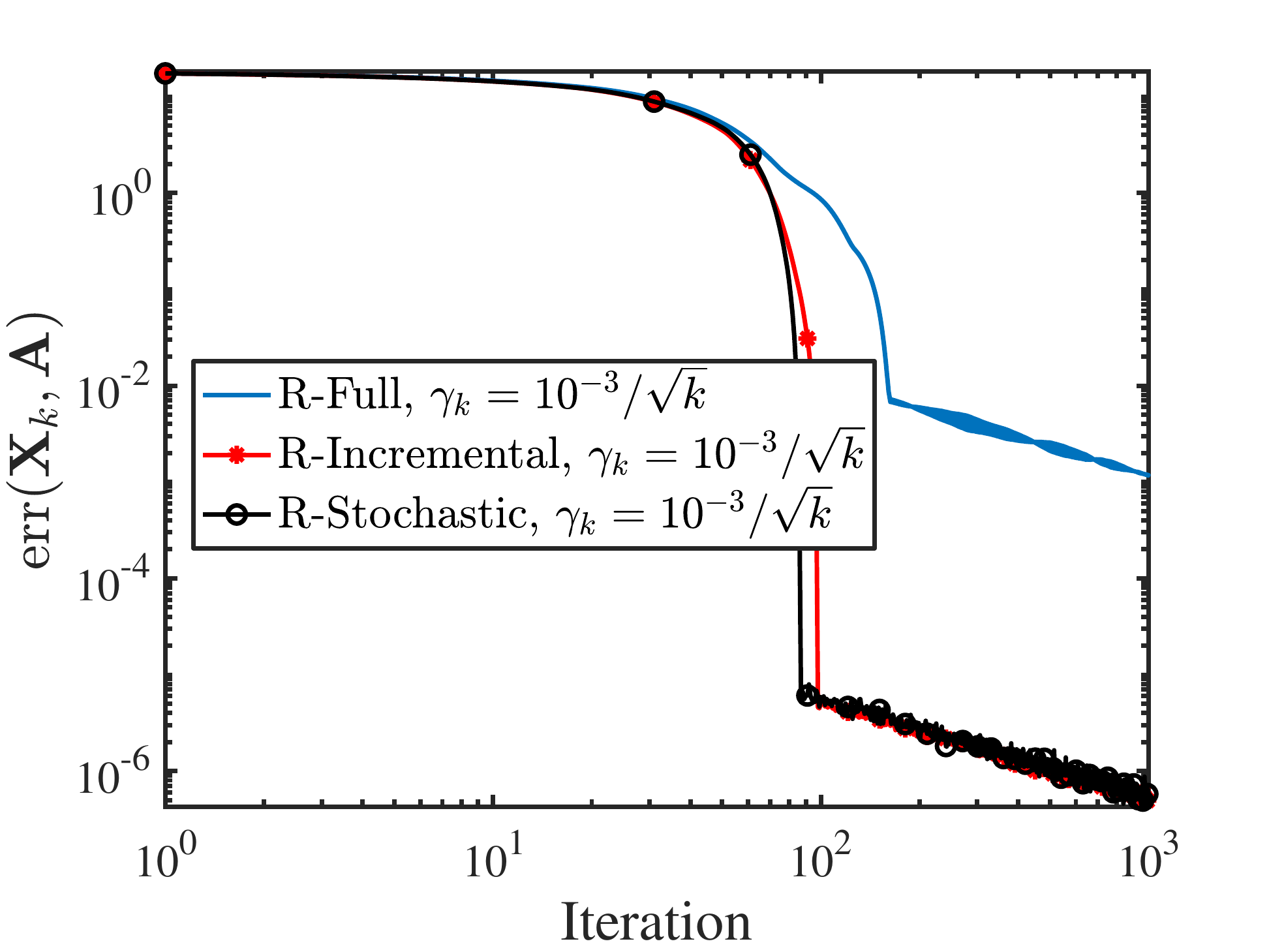}}
		\caption{\footnotesize ODL: Diminishing stepsizes of the form $\gamma_k=10^{-3}/\sqrt{k}$, $k=1,2,\ldots$ \label{fig:odl sublinear}}
	\end{subfigure}
\hfill
	\begin{subfigure}{0.48\linewidth}
		\centerline{
			\includegraphics[width=2.5in]{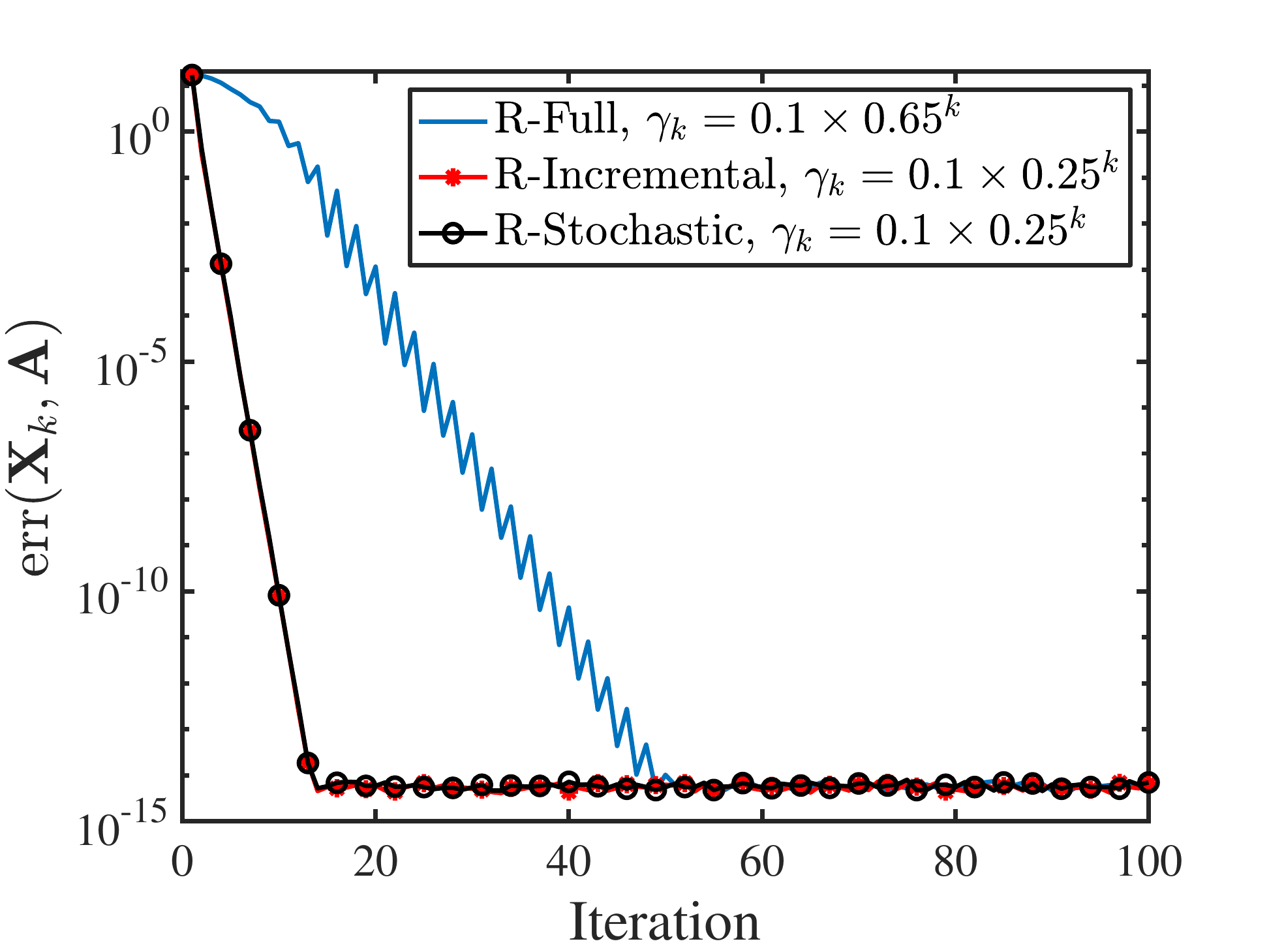}}
		\caption{\footnotesize ODL: Geometrically diminishing stepsizes of the form $\gamma_k=0.1\times\beta^k$, $k=0,1,\ldots$  \label{fig:odl linear}}
	\end{subfigure}
	\caption{\small Convergence performance of Riemannian subgradient-type methods for the orthogonal DL problem \eqref{eq:ODL orthogonal}.
	}\label{fig:odl experiments}
\end{figure}

For the orthogonal DL application, we generate synthetic data in the same way as \cite{bai2018subgradient}.  Specifically, we first generate the underlying orthogonal dictionary $\mA\in \stiefel(n,n)$ with $n=30$ randomly and set the number of samples $m$ to be $m = 1643 \approx 10 \times n^{1.5}$.  We then generate a sparse coefficient matrix $\mS\in \R^{n\times m}$, in which each entry follows the Bernoulli-Gaussian distribution with parameter $0.3$ (sparsity)---i.e., each entry $\mS_{i,j}$ is drawn independently from the standard Gaussian distribution with probability $0.3$ and is set to zero otherwise. Lastly, we obtain the observation $\mY = \mA\mS$.  As before, we generate a standard Gaussian random vector and use it to initialize all the algorithms.  To evaluate the performance of the algorithms, we define the error between $\mX$ and $\mA$ as ${\rm err}(\mX,\mA) = \sum_{i=1}^{n}  \left|  \max_{1\leq j \leq n} \left|  [\vx_i^\top \mA]_{j} \right| - 1 \right|$, where $\vx_i$ is the $i$-th column of $\mX$. Clearly, ${\rm err}(\mX,\mA)=0$ when $\mX$ and $\mA$ are equal up to permutation and sign ambiguities. The numerical results are shown in \Cref{fig:odl experiments}.  The log-log plot in \Cref{fig:odl sublinear} shows the sublinear convergence of Riemannian subgradient-type methods when the diminishing stepsizes suggested in \Cref{thm:rigd global rate} and \Cref{thm:stochastic subgradient global rate} are used. 
\Cref{fig:odl linear} shows the linear convergence of those methods when geometrically diminishing stepsizes of the form $\gamma_k = \beta^k\gamma_0$ are used. Here, $\gamma_0 = 0.1$ and the best decay factor $\beta$ is chosen for each algorithm.

\section{Conclusion}

In this work, we introduced a family of Riemannian subgradient-type methods for minimizing weakly convex functions over the Stiefel manifold. We proved, for the first time, iteration complexity and local convergence rate results for these methods. Specifically, we showed that all these methods have a global sublinear convergence rate, and that if the problem at hand further possesses the sharpness property, then the Riemannian subgradient and incremental subgradient methods with geometrically diminishing stepsizes and a proper initialization will converge linearly to the set of weak sharp minima of the problem. The key to establishing these results is a new Riemannian subgradient inequality for restrictions of weakly convex functions on the Stiefel manifold, which could be of independent interest. Our results can be extended to cover weakly convex minimization over a class of compact embedded submanifolds of the Euclidean space. Lastly, we showed that certain formulations of the RSR and orthogonal DL problems possess the sharpness property and verified the convergence performance of the Riemannian subgradient-type methods on these problems via numerical simulations. 

Our work has opened up several interesting directions for future investigation. First, one can readily generalize our results to weakly convex minimization over a Cartesian product of Stiefel manifolds, which has applications in $\ell_1$-PCA \cite{lerman2018overview,WLS19} and robust phase synchronization \cite{wang2013exact}. Next, since our results are specific to weakly convex minimization over the Stiefel manifold, it would be interesting to see if they can be extended to handle more general nonconvex nonsmooth functions over a broader class of Riemannian manifolds. We believe that this should be possible based on the analytic framework developed here. Finally, we suspect that the global convergence rate $\calO(k^{-\frac{1}{4}})$ we established for the Riemannian subgradient-type methods is not tight. This is because the Riemannian proximal point method for solving problem \eqref{eq:stiefel opt problem} has a global convergence rate of $\calO(k^{-\frac{1}{2}})$ \cite{Chen2020manppa}, and in smooth optimization the gradient descent method has the same global convergence rate as the proximal point method. Hence, it would be interesting to see if the global convergence rate established in this paper can be improved.

\section*{Acknowledgments} We would like to thank Dr. Huikang Liu for fruitful discussions. We also thank the Associate Editor and two anonymous reviewers for their detailed and helpful comments.

\bibliographystyle{siamplain}
\bibliography{nonconvex,manifold}

\appendix
\section{Proof of \Cref{lem:statistics of outliers and inliers}}
\label{sec:appendix}

The proof follows the framework in \cite[Section 8.1.1]{lerman2015robust} with nontrivial modifications in order to handle our matrix-based definitions of $c_{\mY,\min}$ and $c_{\mO,\max}$.

\emph{Part I.} We first derive an upper bound on $c_{\mO,\max}$. Recall that under the Haystack model, the outliers $\vo_1,\ldots, \vo_{m_2} \in \R^n$ are i.i.d. according to the Gaussian distribution $\calN(\mathbf{0},\frac{1}{n}\mId_n)$.  Let $\vo\sim\calN(\mathbf{0},\frac{1}{n}\mId_n)$ denote an i.i.d. copy of $\vo_i$. Then, we have
	\e\label{eq:split} 
	\begin{split}
	     & \sup_{\|\mB\|_F = 1} \ \sum_{i = 1}^{m_2} \|\vo_i^\top \mB\|_2   \\
	     \leq&\sup_{\|\mB\|_F = 1} \ \sum_{i = 1}^{m_2} \left( \|\vo_i^\top \mB\|_2 - \E\left[\|\vo^\top \mB\|_2\right] \right) +  \sup_{\|\mB\|_F = 1} \ \sum_{i = 1}^{m_2} \ \E\left[\|\vo^\top \mB\|_2\right].
	 \end{split}
	\ee
Using Jensen's inequality, we bound the second term as follows:
\e \label{eq:expectation}
\begin{split}
    & \sup_{\|\mB\|_F = 1} \ \sum_{i = 1}^{m_2} \ \E\left[\|\vo^\top \mB\|_2\right]  \leq \sup_{\|\mB\|_F = 1} \ \sum_{i = 1}^{m_2} \ \sqrt{ \E\left[\|\vo^\top \mB\|_2^2\right] } \\
    &= \sup_{\|\mB\|_F = 1} \ \sum_{i = 1}^{m_2} \ \sqrt{ \sum_{j=1}^{r} \sum_{k=1}^n \E\left[ o_{k}^2 b_{kj}^2\right] } = \frac{m_2}{\sqrt{n}}.
\end{split}
\ee
To estimate the first term in~\eqref{eq:split}, let $\{\epsilon_i : i =1,\ldots, m_2\}$ be independent Rademacher random variables (i.e., $\P{\epsilon_i = +1} = \P{\epsilon_i = -1} = 1/2$ for $i=1,\ldots,m_2$) that are independent of $\{\vo_i:i=1,\ldots,m_2\}$. By a standard symmetrization argument (see, e.g., \cite[Lemma 11.4]{boucheron2013concentration}), we have
\e\label{eq:Rademacher symmetrization}
	     \E\left[ \sup_{\|\mB\|_F = 1} \ \sum_{i = 1}^{m_2} \left( \|\vo_i^\top \mB\|_2 - \E\left[\|\vo^\top \mB\|_2\right] \right) \right] \leq 2 \E \left[    \sup_{\|\mB\|_F = 1} \ \sum_{i = 1}^{m_2} \epsilon_i \|\vo_i^\top \mB\|_2  \right].
\ee
Furthermore, let $\{\epsilon_{ij} : i=1,\ldots,m_2; \, j=1,\ldots,r\}$ be independent Rademacher random variables that are independent of $\{\vo_i:i=1,\ldots,m_2\}$ and $\mV \in \R^{n\times r}$ be the matrix whose $j$-th column ($j=1,\ldots,r$) is $\sum_{i = 1}^{m_2} \epsilon_{ij} \vo_i$. Then, by the vector contraction inequality in \cite[Corollary 1]{maurer2016vector} and Jensen's inequality, we have
\begin{align*}
	  &\E \left[    \sup_{\|\mB\|_F = 1} \ \sum_{i = 1}^{m_2} \epsilon_i \|\vo_i^\top \mB\|_2  \right]  \leq \sqrt{2} \E \left[ \sup_{\|\mB\|_F = 1}  \ \sum_{i = 1}^{m_2} \sum_{j=1}^r \epsilon_{ij}  \vo_i^\top  \vb_j   \right] \\
	  & =  \sqrt{2} \E \left[ \sup_{\|\mB\|_F = 1}  \  \sum_{j=1}^r   \left( \sum_{i = 1}^{m_2} \epsilon_{ij} \vo_i \right)^\top \vb_j  \right] 
	   =  \sqrt{2} \E \left[ \sup_{\|\mB\|_F = 1}    \left\langle \mV,   \mB \right\rangle  \right] = \sqrt{2} \E \left[ \| \mV\|_F \right] \\
	  & \leq   \sqrt{2} \sqrt{  \sum_{j=1}^r   \E \left[ \left\| \sum_{i = 1}^{m_2} \epsilon_{ij} \vo_i\right\|_2^2  \right] } 
	  \leq \sqrt{2m_2r}.
\end{align*}
This, together with \eqref{eq:Rademacher symmetrization}, yields
\e \label{eq:expectation bound}
    \E\left[ \sup_{\|\mB\|_F = 1} \ \sum_{i = 1}^{m_2} \left( \|\vo_i^\top \mB\|_2 - \E\left[\|\vo^\top \mB\|_2\right] \right) \right] \leq 2\sqrt{2m_2r}. 
\ee
Now, observe that the function 
\[ (\vo_1,\ldots, \vo_{m_2}) \mapsto h(\vo_1,\ldots, \vo_{m_2}) := \sup_{\|\mB\|_F = 1} \ \sum_{i = 1}^{m_2} \left( \|\vo_i^\top \mB\|_2 - \E\left[\|\vo^\top \mB\|_2\right] \right) \]
is Lipschitz continuous with constant at most $\sqrt{m_2}$. 
Hence, using the Gaussian concentration inequality for Lipschitz functions \cite[Theorem 5.6]{boucheron2013concentration} and \eqref{eq:expectation bound}, we get 
\e \label{eq:high probability bound}
    \P{ \sup_{\|\mB\|_F = 1} \ \sum_{i = 1}^{m_2} \left( \|\vo_i^\top \mB\|_2 - \E\left[\|\vo^\top \mB\|_2\right] \right) \leq 2\sqrt{2m_2r}  + t } \geq 1- 2\exp\left( -\frac{nt^2}{2m_2} \right).
\ee
Upon substituting \eqref{eq:high probability bound} and \eqref{eq:expectation} into \eqref{eq:split} and letting $t = c_2\sqrt{m_2}$, the desired result follows.

\smallskip
\emph{Part II.}  We now derive a lower bound on $c_{\mY,\min}$. Again, recall that under the Haystack model, the inliers $\vy_1,\ldots, \vy_{m_1} \in \R^n$ are i.i.d. according to the Gaussian distribution $\calN(\mathbf{0},\frac{1}{d}\calP_{\calS})$. Thus, for $i=1,\ldots,m_1$, we have $\E[\vy_i\vy_i^\top] = \frac{1}{d}\calP_{\calS} = \frac{1}{d}\mS\mS^\top$ for some orthonormal basis $\mS\in\stiefel(n,d)$ of $\calS$ and $\vy_i = \mS\widetilde{\vy}_i$ for some $\widetilde{\vy}_i \in \R^d$. Now, let $\mD \in \R^{n\times \ell}$ be such that $\|\mD\|_F=1$ and ${\rm col}(\mD) \subseteq \calS$; see~\eqref{eq:cXmin}. Then, there exists a $\widetilde{\mD} \in \R^{d \times \ell}$ such that $\mD = \mS\widetilde{\mD}$ and $\|\widetilde{\mD}\|_F=1$. In particular, we have $\vy_i^\top\mD = \widetilde{\vy}_i^\top\widetilde{\mD}$, and by the rotational invariance of the Gaussian distribution, the vector $\widetilde{\vy}_i$ follows the Gaussian distribution $\calN(\bm{0},\frac{1}{d}\mId_d)$ in $\R^d$. Consequently, we may assume without loss of generality that $\vy_1,\ldots,\vy_{m_1} \in \R^d$ are i.i.d. according to the Gaussian distribution $\calN(\mathbf{0},\frac{1}{d}\mId_d)$ and $\mD \in \R^{d\times \ell}$ satisfies $\|\mD\|_F = 1$. The rest of the proof will be similar to that of Part I. 


Let $\vy \sim \calN(\mathbf{0},\frac{1}{d}\mId_d)$ denote an i.i.d. copy of $\vy_i$. Then, we have 
\e\label{eq:split 2} 
\begin{split}
	& \inf_{\|\mD\|_F = 1} \ \sum_{i = 1}^{m_1} \|\vy_i^\top \mD\|_2   \\
	&\quad \geq \inf_{\|\mD\|_F = 1} \ \sum_{i = 1}^{m_1} \left( \|\vy_i^\top \mD\|_2 - \E\left[\|\vy^\top \mD\|_2\right] \right) +  \inf_{\|\mD\|_F = 1} \ \sum_{i = 1}^{m_1} \ \E\left[\|\vy^\top \mD\|_2\right].
\end{split}
\ee
The first term can be written as
$
-\sup_{\|\mD\|_F = 1} \ \sum_{i = 1}^{m_1} \left( \E\left[\|\vy^\top \mD\|_2\right] - \|\vy_i^\top \mD\|_2  \right)
$.
By following the same arguments as in Part I, we obtain 
\e\label{eq:high probability bound 2}
   \P{ \sup_{\|\mD\|_F = 1} \ \sum_{i = 1}^{m_1} \left( \E\left[\|\vy^\top \mD\|_2\right] - \|\vy_i^\top \mD\|_2  \right) \leq 2\sqrt{2m_1 \ell} + t } \geq 1- 2\exp\left( -\frac{dt^2}{2m_1} \right).
\ee
It remains to estimate the second term in \eqref{eq:split 2}. Let $\vd_i$ be the $i$-th column of $\mD$, where $i=1,\ldots,\ell$. By the Cauchy-Schwarz inequality and the fact that $\|\mD\|_F = 1$, we have
\[
\begin{aligned}
& \underbrace{\left( \|\vd_1\|_2^2 + \cdots +\|\vd_{\ell}\|_2^2  \right)}_{= \|\mD\|_F^2 = 1} \underbrace{\left[ (\vy^\top \vd_1)^2 + \cdots +(\vy^\top \vd_{\ell})^2 \right]}_{=\|\vy^\top \mD\|_2^2} \\
&\qquad\geq \left[  \|\vd_1\|_2  | \vy^\top \vd_1| +\cdots+\|\vd_{\ell}\|_2  | \vy^\top \vd_{\ell}| \right]^2.
\end{aligned}
\]
Since $\vy^\top \vd_i \sim \calN(0,\frac{1}{d}\|\vd_i\|_2^2)$, we obtain
 \[
     \E\left[\|\vy^\top \mD\|_2\right] \geq \E\left[  \|\vd_1\|_2  | \vy^\top \vd_1| +\cdots+\|\vd_{\ell}\|_2  | \vy^\top \vd_{\ell}| \right] = \sqrt{\frac{2}{d\pi}}.
 \]
Note that the above inequality holds as equality when $\vd_1 = \cdots = \vd_{\ell}$. This implies that
\e\label{eq:expectation 2}
     \inf_{\|\mD\|_F = 1} \ \sum_{i = 1}^{m_1} \ \E\left[\|\vy^\top \mD\|_2\right] = m_1\sqrt{\frac{2}{d\pi}}.
\ee
By substituting \eqref{eq:high probability bound 2} with $t = c_1\sqrt{m_1}$ and \eqref{eq:expectation 2} into \eqref{eq:split 2}, we complete the proof.

\end{document}